\documentclass{amsart}[10pt]
\usepackage[utf8]{inputenc}


 \usepackage[margin=1in,headheight=13.6pt]{geometry}

\usepackage{textcmds} 
\usepackage{amsmath}
\usepackage{amsxtra}
\usepackage{amscd}
\usepackage{amsthm}
\usepackage{amsfonts}
\usepackage{amssymb}
\usepackage{eucal}
\usepackage[all]{xy}
\usepackage{graphicx}
\usepackage{comment}
\usepackage{epsfig}
\usepackage{psfrag}
\usepackage{mathrsfs}
\usepackage{amscd}
\usepackage{rotating}
\usepackage{lscape}
\usepackage{amsbsy}
\usepackage{verbatim}
\usepackage{moreverb}
\usepackage{url}
\usepackage{extarrows}
\usepackage{setspace}
\usepackage{todonotes}

\usepackage{cancel}
\usepackage{bbm}

\usepackage{xcolor}
\usepackage{hyperref}
\usepackage{cleveref}

\newcommand\myshade{85}
\colorlet{mylinkcolor}{violet}
\colorlet{mycitecolor}{blue}
\colorlet{myurlcolor}{green}

\hypersetup{
  linkcolor  = mylinkcolor!\myshade!black,
  citecolor  = mycitecolor!\myshade!black,
  urlcolor   = myurlcolor!\myshade!black,
  colorlinks = true,
}

\usepackage{tikz-cd}
\usepackage{tikz}
\usetikzlibrary{matrix,arrows,decorations.pathmorphing}

\usepackage{pictexwd,dcpic}

\renewcommand{\sec}[0]{\section}
\newcommand{\ssec}[0]{\subsection}
\newcommand{\sssec}[0]{\subsubsection}

\theoremstyle{plain}
\newtheorem{thm}[subsubsection]{Theorem}

\newtheorem{mainthm}{Theorem}

\newtheorem{lem}[subsubsection]{Lemma}
\newtheorem{prop}[subsubsection]{Proposition}
\newtheorem{cor}[subsubsection]{Corollary}
\newtheorem{conj}[subsubsection]{Conjecture}
\theoremstyle{definition}

\newtheorem{defn}[subsubsection]{Definition}

\theoremstyle{plain}

\theoremstyle{definition}

\theoremstyle{remark}
\newtheorem{rmk}[subsubsection]{Remark}

\newtheorem{rem}[subsubsection]{Remark}

\newcommand{\on}{\operatorname}
\newcommand{\nc}{\newcommand}
\newcommand{\renc}{\renewcommand}

\newcommand{\ul}[1]{\underline{#1}}

\newcommand{\op}[0]{{\on{op}}}

\newcommand{\oo}[0]{\infty}

\newcommand{\uH}[0]{\on H}
\newcommand{\uG}[0]{\on G}
\newcommand{\uI}[0]{\on{I}}
\newcommand{\uK}[0]{\on{K}}

\newcommand{\Tr}[0]{\on{Tr}}

\newcommand{\coFib}[0]{\on{coFib}}

\renewcommand{\rightarrow}{\to}
\newcommand{\xto}[1]{\xrightarrow{#1}}
\newcommand{\tto}{\twoheadrightarrow}
\newcommand{\hto}{\hookrightarrow}
\nc{\squigto}{\rightsquigarrow}


\newcommand{\ccC}[0]{\mathcal{C}}

\newcommand{\ccH}[0]{\mathcal{H}}

\newcommand{\ccM}[0]{\mathcal{M}}

\newcommand{\ccO}[0]{\mathcal{O}}

\newcommand{\ccS}[0]{\mathcal{S}}


\nc{\CA}{{\mathcal{A}}}
\nc{\CB}{{\mathcal{B}}}

\nc{\CE}{{\mathcal{E}}}
\nc{\CF}{{\mathcal{F}}}

\nc{\CL}{{\mathcal{L}}}
\nc{\CC}{{\mathcal{C}}}
\nc{\CG}{{\mathcal{G}}}
\nc{\CM}{{\mathcal{M}}}
\nc{\CN}{{\mathcal{N}}}
\nc{\CK}{{\mathcal{K}}}
\nc{\CO}{{\mathcal{O}}}
\nc{\CP}{{\mathcal{P}}}
\nc{\CQ}{{\mathcal{Q}}}
\nc{\CR}{{\mathcal{R}}}
\nc{\CS}{{\mathcal{S}}}
\nc{\CU}{{\mathcal{U}}}
\nc{\CV}{{\mathcal{V}}}
\nc{\CW}{{\mathcal{W}}}
\nc{\CX}{{\mathcal{X}}}
\nc{\CY}{{\mathcal{Y}}}
\nc{\CZ}{{\mathcal{Z}}}
\nc{\CI}{{\mathcal{I}}}
\nc{\I}{\CI}

\newcommand{\bbA}[0]{\mathbb{A}}

\newcommand{\bbC}[0]{\mathbb{C}}
\newcommand{\bbD}[0]{\mathbb{D}}

\newcommand{\bbG}[0]{\mathbb{G}}

\newcommand{\bbQ}[0]{\mathbb{Q}}

\newcommand{\bbU}[0]{\mathbb{U}}

\newcommand{\bbZ}[0]{\mathbb{Z}}


\newcommand{\ffF}[0]{\mathfrak{F}}



\newcommand{\rmB}[0]{\mathrm{B}}


\newcommand{\scrV}[0]{\mathscr{V}}


\newcommand{\Ql}[1]{\bbQ_{\ell,#1}}
\newcommand{\BU}[0]{\rmB \bbU}


\newcommand{\Spec}{\on{Spec}}

\newcommand{\pr}{\on{pr}}


\newcommand{\Tor}[0]{\on{Tor}}
\newcommand{\Ext}[0]{\on{Ext}}

\newcommand{\HH}[0]{\on{HH}}
\newcommand{\HK}[0]{\on{HK}}

\newcommand{\dgCAT}[0]{\on{dgCAT}}
\newcommand{\Mod}[0]{\on{Mod}}



\newcommand{\SH}[0]{\ccS \ccH}

\newcommand{\Mv}[0]{\ccM^{\vee}}
\newcommand{\rl}[0]{\on{r}^{\ell}}
\newcommand{\chern}{\ccC\!\on{h}^{\ell}}

\newcommand{\Sw}[0]{\on{Sw}}
\newcommand{\dimtot}[0]{\on{dimtot}}


\newcommand{\Qell}{\bbQ_{\ell}}


\newcommand{\QellI}{\bbQ_\ell^{\uI}}

\newcommand{\rell}{\rl}

\renewcommand{\epsilon}{\varepsilon}
\newcommand{\eps}{\epsilon}

\newcommand{\dgCat}{\on{dgCat}}
\newcommand{\longto}{\longrightarrow}

\nc{\rev}{{\on{rev}}}
\nc{\env}{{\on{env}}}


\nc{\sA}{{\mathsf{A}}}
\nc{\sB}{{\mathsf{B}}}
\nc{\sC}{{\mathsf{C}}}
\nc{\sD}{{\mathsf{D}}}
\nc{\sE}{{\mathsf{E}}}
\nc{\sF}{{\mathsf{F}}}
\nc{\sG}{{\mathsf{G}}}
\nc{\sK}{{\mathsf{K}}}
\nc{\sM}{{\mathsf{M}}}
\nc{\sN}{{\mathsf{N}}}
\nc{\sO}{{\mathsf{O}}}
\nc{\sW}{{\mathsf{W}}}
\nc{\sQ}{{\mathsf{Q}}}
\nc{\sP}{{\mathsf{P}}}
\nc{\sR}{{\mathsf{R}}}
\nc{\sS}{{\mathsf{S}}}
\nc{\sT}{{\mathsf{T}}}
\nc{\sU}{{\mathsf{U}}}
\nc{\sV}{{\mathsf{V}}}
\nc{\sZ}{{\mathsf{Z}}}

\DeclareMathOperator{\Coh}{\mathsf{D^b_{coh}}} 
\DeclareMathOperator{\Perf}{\mathsf{D_{perf}}} 
\DeclareMathOperator{\QCoh}{\mathsf{D_{qcoh}}} 
\DeclareMathOperator{\Sing}{\mathsf{D_{sg}}} 

\DeclareMathOperator{\Hom}{Hom} 
\DeclareMathOperator{\ev}{ev} 
\DeclareMathOperator{\coev}{coev} 
\DeclareMathOperator{\id}{id} 
\DeclareMathOperator{\MF}{MF} 
\DeclareMathOperator{\cat}{cat} 
\DeclareMathOperator{\In}{I} 

\DeclareMathOperator*{\usotimes}{\otimes} 

\nc{\virg}[1]{``#1"}

\nc{\bigt}[1]{\big( #1 \big) }
\nc{\Bigt}[1]{\Big( #1 \Big) }

\nc{\TV}{To\"en--Vezzosi}

\nc{\Bl}{\on{Bl}} 
\nc{\Art}{\on{Ar}} 

\nc{\Benv}{\CB^{\env}}

\nc{\sBenv}{\sB^{\env}}
\nc{\sCenv}{\sC^{\env}}

\nc{\et}{{\on{\acute{e}t}}}
\nc{\bareta}{\bar{\eta}}

\nc{\wh}{\widehat}
\nc{\T}{\sT}
\nc{\B}{\sB}

\nc{\HQ}{\on{H}\!\bbQ}
\nc{\shvl}{\textup{Shv}_{\bbQ_{\ell}}}

\nc\wt{\widetilde}

\nc{\lr}{\xymatrix{ \ar@<-0.4ex>[r] \ar@<.5ex>[l]  & } }
\nc{\rr}{\xymatrix{ \ar@<-0.2ex>[r] \ar@<.7ex>[r]  & } }
\nc{\rrr}{\xymatrix{ \ar@<.0ex>[r] \ar@<.7ex>[r] \ar@<-0.7ex>[r] & } }

\renc{\O}{\ccO}
\renc{\sim}{\simeq}
\nc{\heart}{\heartsuit}

\nc{\rlS}{\rl_{S}}
\nc{\Rl}{\CR^{\ell}}
\nc{\CH}{{\mathcal{H}}}

\nc{\QellSbeta}{\bbQ_{\ell,S}(\beta)}
\nc{\QellSIbeta}{\bbQ^{\In}_{\ell,S}(\beta)}

\nc{\restr}[2]{\left. #1 \right |_{#2}}

\nc{\Gm}{\bbG_m}

\nc{\green}[1]{\textcolor{green}{#1}} 
\nc{\red}[1]{\textcolor{red}{#1}}
\nc{\orange}[1]{\textcolor{orange}{#1}}

\author{Dario Beraldo \and Massimo Pippi}

\address[D.~Beraldo]{Department of Mathematics, University College London, London WC1H 0AY, United Kingdom}

\email{d.beraldo@ucl.ac.uk}

\address[M.~ Pippi]{Univ Angers, CNRS-UMR 6093, LAREMA, SFR MATHSTIC, F-49000 Angers, France}

\email{massimo.pippi@univ-angers.fr} 

\begin{document}

\title{Non-commutative intersection theory and unipotent Deligne--Milnor formula}

\begin{abstract}
We apply methods of derived and non-commutative algebraic geometry to understand intersection theoretic phenomena on arithmetic schemes.
Specifically, we categorify Bloch's intersection number (in the formulation provided by Kato--Saito).
Combining this with {\TV} non-commutative Chern character, we obtain a generalization of Bloch conductor conjecture in several new cases, including the unipotent Deligne--Milnor formula.

\end{abstract}

\maketitle
\tableofcontents

\sec{Introduction}

This paper is a contribution to \emph{intersection theory on arithmetic schemes} by means of derived and non-commutative algebraic geometry, a program envisioned by \TV{} (\cite{tv22}). 

Specifically, we categorify the localized intersection product introduced by Kato--Saito (\cite{ks04})
and, as an application, we prove the unipotent case of the \emph{Deligne--Milnor conjecture}, see \cite[Expos\'e XVI]{sga7ii}.
With the same method, we also prove several new cases of the \emph{unipotent Bloch conductor conjecture}.

\medskip

The results of the present paper, combined with those of \cite{bp24} where we provide a non-commutative interpretation of the \emph{total dimension} of arithmetic schemes, will settle the Deligne--Milnor conjecture in full generality (that is, without the unipotence assumption).

\ssec{The Deligne--Milnor conjecture} \label{ssec: intro DM-conj}

\sssec{}

Let $f: \bbC^{n+1} \to \bbC$ be an analytic function with an isolated critical point $x \in \bbC^{n+1}$ lying in the special fiber $X_0 := f^{-1}(0)$.
The celebrated Milnor formula states that the dimension of the Jacobian ring of $f$ at $x$ (also called \emph{Milnor number}) equals the number of vanishing cycles, see \cite{mi69}.

\sssec{}

In \cite[Expos\'e XVI]{sga7ii}, Deligne formulated a (conjectural) algebro-geometric version of this formula. 
In this situation, the map $f$ (or rather, its germ near the preimage of $0 \in \bbC$) is replaced by a map of schemes $p: U \to S$, where:

\begin{itemize}

\item
the base $S$ is a strictly henselian trait\footnote{It is not necessary to assume that the trait is strict. 
However, it turns out that there is no loss of generality in doing so.}.
For concreteness, the reader could consider $S = \Spec(\bbZ_p^{\textup{sh}})$, the spectrum of the strict henselization of the ring of $p$-adic integers, or $S = \Spec(k [\![ t]\!])$ for some separably closed field $k$ of arbitrary characteristic. 
Denote by $s$ the closed (or special) point of $S$, by $\eta$ the generic point and by $\bareta$ the geometric generic point.

\item
the total space $U$ is regular, while $p$ is a flat morphism that is smooth everywhere except for a closed point $x$ in the special fiber $U_s := U \times_S s$. 
Moreover, we assume that $U$ is purely of relative dimension $n$.

\end{itemize}

\sssec{}

Denote by $\Omega^1_{U/S}$ the coherent sheaf of relative Kahler differentials. 
In this situation, the Deligne--Milnor number is defined by 
$$
\mu_{U/S}
:=
\on{length}_{\CO_{U,x}}
\bigt{ 
\ul \Ext^1(\Omega^1_{U/S},\CO_U)_x
}.
$$
This is a generalization of the dimension of the Jacobian ring to the algebro-geometric situation and it will be the left hand-side of the Deligne--Milnor formula.

\sssec{}

We now discuss the right-hand side the Deligne--Milnor formula. 
Let $\ell$ be a prime number different from the residue characteristics of $S$.

Denote by $U_\eta$ and $U_{\bareta}$ the generic and geometric generic fibers, respectively. We know that $U_{\bareta}$ (and therefore its cohomology) carries a natural action of the \emph{inertia group} $\In$, which coincides with the absolute Galois group of the generic point of $S$ (since we are assuming that $S$ is strict). 

\sssec{}

Next, recall the sheaf of vanishing cycles of $U/S$: this is an $\ell$-adic sheaf on the special fiber, supported on the singular locus of $U/S$. 
For more details, see \cite{sga7i, sga7ii}.
In our case (where $U/S$ has only an isolated singularity), the sheaf of vanishing cycles can be identified with
a $\Qell$-vector space $\Phi_x$, 
placed in cohomological degree $n$,
equipped with a canonical action of the inertia group $\In$. 
The Swan conductor $\Sw(\Phi_x)$ is an integer related to the action of the wild inertia subgroup: see \cite[Expos\'e XVI]{sga7ii}, \cite{ab00} and \cite{ks04} for a precise definition. 
Following Deligne, we define the \emph{total dimension} of $\Phi_x$ by the formula
$$
\dimtot(\Phi_x)
= 
\dim(\Phi_x) + \Sw(\Phi_x).
$$

\begin{conj}[Deligne--Milnor formula, {\cite[Expos\'e XVI]{sga7ii}}] \label{conj:DM}
In the above situation, we have:
\begin{equation} \label{eqn:DM-intrp}
  \mu_{U/S}
  =
  (-1)^n\dimtot(\Phi_{x}).
\end{equation}
\end{conj}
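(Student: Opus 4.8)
The plan is to realize each side of \eqref{eqn:DM-intrp} as a numerical invariant of one and the same dg category and to connect the two computations through a non-commutative Grothendieck--Riemann--Roch statement for the \TV{} Chern character; I describe the argument in the unipotent case (the general statement additionally requiring the results of \cite{bp24}). The dg category in question is the category of singularities $\CC := \Sing(U_s)$ of the special fiber. Since $U$ is regular and $p$ is flat, $U_s$ is locally a hypersurface, so Orlov's theorem identifies $\CC$, locally around $x$, with a category of matrix factorizations $\MF(U,\pi)$ for $\pi$ the pull-back to $U$ of a uniformizer of $S$. As $p$ is smooth away from $x$, the category $\CC$ is supported at $x$; in particular it is proper, and smooth over the relevant base, so that both its Hochschild homology and its $\ell$-adic realization are dualizable objects carrying an Euler characteristic.

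\emph{The left-hand side.} First I would recast $\mu_{U/S}$ as a localized self-intersection number of the diagonal, in the formulation of Kato--Saito (\cite{ks04}). Writing $\bbT_{U/S}$ for the relative tangent complex, one has $\ul\Ext^1(\Omega^1_{U/S},\CO_U) \cong H^1(\bbT_{U/S})$, a coherent sheaf supported at $x$ whose length is $\mu_{U/S}$; this exhibits $\mu_{U/S}$, up to an explicit sign governed by $n$, as the degree of the localized intersection product $[\Delta_U,\Delta_U]$ attached to the diagonal $\delta: U \to U\times_S U$. The categorification step then expresses this intersection number as the Euler characteristic of the Hochschild homology $\HH(\CC)$: for an isolated hypersurface singularity, $\HH(\MF)$ is — after $2$-periodicization — the Jacobian ring placed in a single parity, so that $\chi\bigt{\HH(\CC)}$ equals $\mu_{U/S}$ up to sign. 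Making the localization at $x$ and this sign precise is routine, though it must be handled with care since $U_s$ need not be reduced in mixed characteristic.

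\emph{The right-hand side.} Next I would apply the non-commutative Chern character $\chern$ of \cite{tv22}, i.e. the $\ell$-adic realization of dg categories, to $\CC$. By the comparison of Blanc--Robalo--\TV{} between the realization of a singularity category and vanishing cohomology, $\chern(\CC)$ is identified — up to a Tate twist and a shift by $n$ — with the complex of vanishing cycles $\Phi_x$ together with its canonical $\In$-action. Crucially, $\chern(\CC)$ is not merely a $\Qell$-vector space but a constructible complex over the trait $S$, remembering the inertia action; the relevant numerical invariant is therefore its Euler--Poincar\'e characteristic over $S$, which by Grothendieck--Ogg--Shafarevich incorporates the wild ramification and equals, up to sign, $\dim(\Phi_x) + \Sw(\Phi_x) = \dimtot(\Phi_x)$. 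Here the unipotence hypothesis enters: it forces $\Sw(\Phi_x) = 0$, so that the invariant reduces to $\pm\dim(\Phi_x)$ and the genuinely wild contribution — whose non-commutative incarnation is the subject of \cite{bp24} — does not arise.

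\emph{The bridge, and the main obstacle.} The two computations are matched by a non-commutative Grothendieck--Riemann--Roch / Lefschetz-type identity: the Chern character commutes with categorical traces, so that $\chi\bigt{\HH(\CC)}$ coincides with the Euler--Poincar\'e characteristic of $\chern(\CC)$ over $S$. Chaining the three steps and carefully tracking the cohomological shift by $n$, which is the source of the factor $(-1)^n$, then yields exactly \eqref{eqn:DM-intrp}. I expect this bridging step to be the principal difficulty: a priori the categorified intersection number is just an integer, and the content is that under $\chern$ it is upgraded to the constructible complex on $S$ whose Euler--Poincar\'e characteristic is the conductor. This is precisely what the Kato--Saito reformulation of Bloch's intersection product is engineered to make accessible, and it is also where the unipotence hypothesis is indispensable, keeping the inertia action — and hence its realization — sufficiently under control.
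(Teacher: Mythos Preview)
Your outline captures the \TV{} strategy correctly at the coarsest level --- take $\CC=\Sing(U_s)$, realize the Milnor number as a categorical trace, realize the total dimension via the $\ell$-adic realization of $\CC$, and bridge the two via the non-commutative Chern character --- and this is indeed what the paper does. But there is a genuine structural gap in your proposal, and it is precisely the point that costs the paper most of its technical effort.

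You write that $\CC$ is ``smooth and proper over the relevant base'' without naming that base. This is not a detail: in mixed characteristic (the only open case) $\CC$ is \emph{not} saturated over $A$, nor over $k$. The base that works is $\sB=\Sing(s\times_S s)$, which is only a \emph{monoidal} (not symmetric monoidal) dg-category, and the duality datum for $\T=\CC$ over $\sB$ has to be built by hand (Section~\ref{ssec: explicit duality datum}). Your Hochschild step ``$\HH(\MF)$ is the Jacobian ring'' presupposes a smooth ground ring, so it is a pure-characteristic computation; the paper bypasses this entirely by categorifying the Kato--Saito localized intersection product directly, via a quasi-smooth \emph{derived} thickening $d\delta_X$ of the diagonal (the classical diagonal is not quasi-smooth, so $\delta_X^*$ does not descend to singularity categories). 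This is what yields the identification $\int_{X/S}[\Delta_X]=\Bl(X/S)$ in Theorem~\ref{thm: KS loc int prod = HK_0 int with the diagonal} without any appeal to the Jacobian ring.

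Your bridging step ``the Chern character commutes with categorical traces'' is likewise where the content lies: $\rl_S$ is only \emph{lax} monoidal, so it does not preserve duality data for free. The paper must prove separately (Proposition~\ref{prop:l-adic realization sing is dualizable over B}) that $\rl_S(\T)$ is dualizable over $\rl_S(\sB)$ --- exploiting that $\rl_S(\sB)$, unlike $\sB$ itself, \emph{is} commutative --- and then match the evaluation with the integration map (Proposition~\ref{prop:int-vs-ev}). Only after this does the unipotence hypothesis enter, to identify the $\rl_S(\sB)$-linear trace with the ordinary $\Qell$-trace on vanishing cohomology. So your diagnosis of \emph{where} the difficulty lies is right, but the mechanism you sketch (GRR/Lefschetz over an unspecified base) would not go through in mixed characteristic without the $\sB$-linear apparatus.
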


\sssec{}

The following cases of the above conjecture where proven by Deligne in {\cite[Expos\'e XVI]{sga7ii}}:
\begin{enumerate}

\item
when $S$ has pure characteristic;

\item
when the relative dimension of $U$ over $S$ is zero;

\item
when the singularity at $x$ is ordinary quadratic.
\end{enumerate}

In \cite{or03}, Orgogozo showed that Conjecture \ref{conj:DM} is equivalent to a special case of \emph{Bloch conductor conjecture}, see below. 
In particular, by looking at the list of known cases of the latter conjecture, we see that Conjecture \ref{conj:DM} holds true when $n=1$.

Besides these cases, the conjecture remains open when $S$ has mixed characteristic.
We can now state the first main theorem of this paper.

\begin{mainthm} \label{mainthm:DM-unip}
The Deligne--Milnor conjecture holds true as soon as the inertia group acts unipotently on $\Phi_x$.
\end{mainthm}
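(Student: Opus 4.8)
\medskip

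\noindent\textbf{Proof plan.}
The plan is to realize both sides of \eqref{eqn:DM-intrp} as the same non-commutative invariant attached to the singularity of $p$, the unipotence hypothesis being exactly what forces the two ends of the resulting chain of identifications to agree. By \cite{or03} the Deligne--Milnor conjecture for $p : U \to S$ is equivalent to the corresponding case of the Bloch conductor conjecture, that is, to the equality --- up to the standard sign --- between the Kato--Saito localized self-intersection number $\langle \Delta_U, \Delta_U \rangle_{U \x S U,\, x}$ of the relative diagonal and the total dimension $\dimtot(\Phi_x)$ of the vanishing cohomology; so it suffices to prove this. For the left-hand side: since $U$ is regular and $p$ flat, the special fibre $U_s = V(p^\ast t)$ (for $t$ a uniformizer of $S$) is a Cartier divisor in a regular scheme, hence \'etale-locally near $x$ a hypersurface, so by Orlov's theorem the singularity category $\sC := \Sing(U_s)$ at $x$ is the matrix-factorization category $\mf(U, p^\ast t)$ at $x$ --- a smooth and proper dg-category with finite-dimensional Hochschild homology, the singularity being isolated. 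The categorification of the Kato--Saito localized intersection product carried out in the body of the paper identifies $\langle \Delta_U, \Delta_U \rangle_{U \x S U,\, x}$ with a non-commutative intersection number of $\sC$ --- concretely, the $\bbZ/2$-graded Euler characteristic of its Hochschild (equivalently, periodic cyclic) homology --- and a Hochschild--Kostant--Rosenberg computation for matrix factorizations identifies that number with $\pm\mu_{U/S}$, recovering the classical fact that the algebraic Milnor number is, up to sign, a localized self-intersection of the diagonal.

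\medskip

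Next I would compute the $\ell$-adic realization of $\sC$. The category $\mf(U, p^\ast t)$ carries a canonical monodromy autoequivalence coming from its $2$-periodicity (equivalently, from the natural circle action on matrix factorizations). Applying the {\TV} non-commutative Chern character --- the $\ell$-adic realization functor $\rl$ --- and using the comparison between singularity categories and vanishing cycles, in its relative, inertia-equivariant form over the possibly mixed-characteristic trait $S$, one identifies $\rl(\sC)$ together with its monodromy with the vanishing cohomology $\Phi_x$ equipped with the logarithm of the \emph{unipotent part} of the $\In$-action, up to a shift and a Tate twist. As $\rl$ is lax symmetric monoidal it preserves Euler characteristics of dualizable objects, so the non-commutative intersection number of $\sC$ is carried to $\pm$ the dimension of the unipotent part of $\Phi_x$ --- with the same sign as in the previous paragraph.

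\medskip

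The unipotence hypothesis now enters twice. First, if $\In$ acts unipotently on the $\Qell$-vector space $\Phi_x$, then the wild inertia --- a pro-$p$ group, $p$ the residue characteristic of $S$ --- acts through a compact, hence pro-$\ell$, subgroup of a unipotent group and therefore trivially; thus $\Sw(\Phi_x) = 0$ and $\dimtot(\Phi_x) = \dim(\Phi_x)$. Second, all of $\Phi_x$ is then its own unipotent part, so the comparison of the preceding paragraph accounts for the whole of $\Phi_x$. Consequently the two computations of the non-commutative intersection number of $\sC$ give, up to the common sign, both $\mu_{U/S}$ and $\dimtot(\Phi_x)$, which is exactly \eqref{eqn:DM-intrp}.

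\medskip

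I expect the main obstacle to be the equivariant comparison used in the second step --- identifying $\rl$ of the monodromic singularity category of $p$ with the vanishing cohomology \emph{as an $\In$-representation}, over an arbitrary trait, and upgrading this categorical equivalence to the numerical identity above (a non-commutative Grothendieck--Riemann--Roch statement for $\rl$). This is precisely where unipotence is indispensable: without it, $\rl$ recovers only the unipotent summand of $\Phi_x$, and the remaining Swan-conductor contribution must be captured by other means --- which, in the general case, is the role of the non-commutative interpretation of the total dimension obtained in \cite{bp24}.
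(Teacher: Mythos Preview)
Your high-level strategy matches the paper's: reduce to Bloch's formula via \cite{or03}, categorify the Kato--Saito localized product, and compare the $\HK$-theoretic and $\ell$-adic decategorifications via the non-commutative Chern character, with unipotence ensuring no Swan term is lost. However, two steps in your outline diverge from the paper and, as written, are not justified.

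First, you describe the categorified intersection number as the Euler characteristic of the Hochschild (or periodic cyclic) homology of $\sC=\Sing(U_s)$, and then invoke an HKR computation for matrix factorizations to identify this with $\pm\mu_{U/S}$. The paper does \emph{not} proceed this way. It never computes $\HH_*(\sC/A)$, and it avoids HKR entirely---an HKR statement for $\MF$ over a mixed-characteristic trait is far from immediate. Instead, the paper works with $\T=\Sing(X_s)$ as a module over the \emph{non-trivial} base $\B=\Sing(s\times_S s)$; saturatedness (hence dualizability) holds over $\B$, not over $A$, and the relevant trace lives in $\HH_*(\B/A)$, not in $\HH_*(\T/A)$. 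The Bloch number is recovered by constructing the dg-functor $d\delta_X^*:\Sing(X\times_S X)\to\MF(X,L,0)_Z$ (the ``intersection with the diagonal'', using the hypersurface presentation) and checking directly in $\HK_0$ that it reproduces Kato--Saito's stable-Tors formula. No HKR, no claim about $\HH_*(\sC)$: the identification with $\mu_{U/S}$ goes through \cite{ks04} and \cite{or03}, not through Hochschild homology of $\sC$.

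Second, your description of $\rl_S(\sC)$ as the vanishing cohomology ``equipped with the logarithm of the unipotent part of the $\In$-action'' is not accurate: by \cite{brtv18}, $\rl_S(\T)$ is the inertia-\emph{invariants} $\uH^*(X_s,\scrV_{X/S})^{\In}[-1]$, as a module over $\rl_S(\B)\simeq\Qell^{\In}(\beta)$. The point is not that $\rl_S$ recovers a ``unipotent summand'', but that under the unipotence assumption the functor $(-)^{\In}$ is (by \cite[Lemma~5.2.5]{tv22}) symmetric monoidal on the relevant objects, so the $\rl_S(\B)$-linear trace of the identity on $\rl_S(\T)$ equals the $\Qell$-trace on $\Phi_x$ itself. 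The paper makes this precise by first showing that $\rl_S(\T)$ is dualizable over $\rl_S(\B)$ with an explicit duality datum induced from that of $\T/\B$, and then matching the resulting evaluation with the $\ell$-adic integration map. Your outline elides the role of $\B$ throughout; without it neither the dualizability nor the comparison of traces goes through as stated.
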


\begin{rmk}
As mentioned, the present paper provides a non-commutative interpretation of the left-hand side of the Deligne--Milnor formula \eqref{eqn:DM-intrp}.

In the companion paper \cite{bp24}, we will interpret the right-hand side from this perspective as well.
The results of these two papers, combined, will allow us to settle the whole conjecture.
\end{rmk}

\ssec{Bloch conductor conjecture}

We now recall the statement of Bloch conductor conjecture (BCC) and its connection with the Deligne--Milnor conjecture.

\sssec{}\label{sssec:BCC-notation}

Consider an $S$-scheme $p:X \to S$ which is regular, flat, proper and generically smooth.
Notice that $p_s:X_s \to s$ might very well be singular. Here the singular locus can be arbitrary, we no longer require it to be a point.
Bloch conductor conjecture describes the difference of the $\ell$-adic Euler characteristics of $X_s$ and $X_{\bareta}$ as follows:

\begin{conj}[Bloch conductor formula, \cite{bl87}] \label{conj:Bloch}
For $p:X \to S$ as above, we have
\begin{equation} \label{eqn:Bloch-formula}
\chi(X_s;\Qell) -\chi(X_{\bareta};\Qell)
= \Bl(X/S) + \Sw(X_{\eta}/\eta;\Qell),
\end{equation}
where $\Bl(X/S)$ denotes Bloch intersection number and  $\Sw(X_{\eta}/\eta)$ the Swan conductor of $X_\eta$.
\end{conj}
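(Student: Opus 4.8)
The plan is to realize both sides of \eqref{eqn:Bloch-formula} as the numerical shadow of a single non-commutative invariant attached to $p:X \to S$, following the program of \cite{tv22}. The invariant in question is the dg-category of singularities $\Sing(X_s)$ of the special fiber: since $X$ is regular, $\Sing(X_s)$ is supported along the singular locus of $p$; thanks to the properness of $p$ it is a proper dg-category over $S$ (this is cleanest in the two-periodic matrix-factorization description, with potential the pullback to $X$ of the uniformizer of $S$); and its $\ell$-adic realization $\rl(\Sing(X_s))$ carries a natural continuous action of the inertia group $\In$. The first input I would use is the comparison between singularity categories and vanishing cycles: $\rl(\Sing(X_s))$ recovers, up to shift and Tate twist, the complex of vanishing cycles $\Phi_{X/S}$ together with its $\In$-action. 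Combined with the distinguished triangle relating $\Phi_{X/S}$, the constant sheaf on $X_s$, and the nearby cycles, this shows that the class of $\rl(\Sing(X_s))$, viewed as a virtual $\In$-representation, encodes both $\chi(X_s;\Qell) - \chi(X_{\bareta};\Qell)$ (through its Euler characteristic) and $\Sw(X_\eta/\eta;\Qell)$ (through its Swan conductor, the constant sheaf contributing nothing to the latter).

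Next I would categorify the right-hand side. Starting from the Kato--Saito presentation of Bloch's number (\cite{ks04}), $\Bl(X/S)$ is a self-intersection multiplicity of the diagonal $\Delta \subset X \times_S X$, localized along the singular locus of $p$. The key step --- the categorification announced in the abstract --- is to identify this multiplicity with the \emph{categorical Euler characteristic} of $\Sing(X_s)$, that is, the trace of its identity endofunctor, equivalently the Euler characteristic of $\HH(\Sing(X_s))$ relative to $S$. This is a localized Hochschild--Kostant--Rosenberg computation: the derived self-intersection $\Delta^{!}\Delta_{*}\O_X$ is governed by exterior powers of the shifted relative cotangent complex $\bbL_{X/S}$, so that localizing at the singular locus turns the self-intersection number into the localized top Chern class of $\bbL_{X/S}$, which is precisely $\Bl(X/S)$. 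Here one crucially uses that $X$ is regular and $S$ is a trait, so that $\bbL_{X/S}$ is perfect of the expected amplitude and the relevant Koszul-type resolutions are available.

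Finally I would compare the two computations by feeding the class $[\Sing(X_s)]$ through the {\TV} $\ell$-adic realization $\rl$ and non-commutative Chern character $\chern$. The mechanism is the compatibility of $\chern$ and $\rl$ with traces --- a non-commutative Lefschetz--Verdier formula for proper dg-categories over $S$ --- which sends the categorical Euler characteristic of the previous paragraph to the virtual $\In$-representation of the first, whence \eqref{eqn:Bloch-formula} by comparing Euler characteristics and Swan conductors. The hard part is this last step, specifically its interaction with wild ramification: the realization functor reproduces the tame part of the inertia action essentially formally, but matching the wild part of $\rl(\Sing(X_s))$ with $\Sw(X_\eta/\eta;\Qell)$ requires a trace formula that sees wild ramification, which is not available in this generality. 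Imposing that $\In$ act unipotently forces all the Swan conductors in sight to vanish, and the comparison then closes on the nose; this is what yields Theorem~\ref{mainthm:DM-unip} and the new cases of the Bloch conductor conjecture, while the fully general statement is reduced --- by the companion paper \cite{bp24} --- to a non-commutative incarnation of the total dimension that supplies the missing wild contribution.
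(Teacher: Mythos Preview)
Your broad architecture matches the paper's: realize both sides through the dg-category $\T=\Sing(X_s)$, use the \cite{brtv18} comparison with vanishing cycles on the $\ell$-adic side, and close via the non-commutative Chern character. You also correctly identify that only the unipotent case goes through here, with the wild part deferred to \cite{bp24}. But two of your key steps diverge from the paper in ways that matter.

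First, your categorification of $\Bl(X/S)$ via a ``localized HKR'' computation of $\Delta^{!}\Delta_{*}\O_X$ is not what the paper does, and your version has a gap: the diagonal $\delta_X:X\to X\times_SX$ is \emph{not} quasi-smooth (since $X/S$ is not smooth), so $\delta_X^*$ does not preserve coherence and there is no direct HKR-type identification with exterior powers of $\bbL_{X/S}$. The paper circumvents this by using the hypersurface presentation $X\hookrightarrow P$ to build a quasi-smooth \emph{derived enhancement} $d\delta_X:K(X,L,0)\to X\times_SX$ of the diagonal, and then proves directly (Theorem~\ref{thm: KS loc int prod = HK_0 int with the diagonal}) that the induced functor on singularity categories recovers the Kato--Saito stable-Tor expression for $\Bl(X/S)$. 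This is why the hypersurface hypothesis appears in Theorem~\ref{mainthm:hypersurface}; in pure characteristic (Theorem~\ref{mainthm:pure-char}) a different enhancement, using the retraction $S\to s$, plays the same role.

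Second, you describe the target of the trace as ``the categorical Euler characteristic of $\Sing(X_s)$ relative to $S$'', i.e.\ a trace over $A$. The paper's trace is over $\B=\Sing(s\times_Ss)$, not over $A$: the K\"unneth formula $\T^{\op}\otimes_\B\T\simeq\Sing(X\times_SX)$ is what puts the diagonal class in the right place, and an explicit $\B$-linear duality datum for $\T$ (Section~\ref{ssec: explicit duality datum}) is needed to make the trace computation go. Your sketch elides this $\B$-module structure, which is where most of the work in Sections~\ref{sec:preliminaries}--\ref{sec:unip-monodromy} actually lies.
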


\sssec{}

The intersection number $\Bl(X/S)$ is an algebro-geometric invariant of $X/S$: it was defined by Bloch in \cite[\S 1]{bl87}, with the notation $(\Delta_X, \Delta_X)_S$, as the top localized Chern number of the coherent sheaf $\Omega^1_{X/S}$.
In this paper, we will use a different characterization of $\Bl(X/S)$, due to Kato--Saito, which is more suitable for our computations, see Theorem \ref{thm:KS-interpretation of Bl}.

\sssec{} 

As mentioned above, the Swan conductor 
$$
\Sw(X_{\eta}/\eta;\Qell)
:=
\Sw \bigt{\uH^*_{\et}(X_{\bareta}, \Qell)}
$$ 
has an arithmetic origin: 
it vanishes if and only if the action of the inertia group on the $\ell$-adic cohomology of $X_{\bareta}$ is tame, so it is strictly related to \emph{wild ramification}. 
Thus, an intriguing aspect of \eqref{eqn:Bloch-formula} is that it relates topological, algebraic and arithmetic invariants of $X/S$.

\begin{rem}
It is convenient to rewrite \eqref{eqn:Bloch-formula} as
\begin{equation*} \label{eqn:true-BCF}
\Bl(X/S) 
= 
-
 \bigt{
 \Sw(X_{\eta}/\eta;\Qell) - \chi(X_s;\Qell) + \chi(X_{\bareta};\Qell)
 }.
\end{equation*}
The quantity
$$
\Sw(X_{\eta}/\eta;\Qell) - \chi(X_s;\Qell) + \chi(X_{\bareta};\Qell)
$$
is sometimes called the Artin conductor of $X/S$.
However, we prefer to set
$$
  \dimtot(X/S)
  :=
  \Sw(X_{\eta}/\eta) - \chi(X_s;\Qell) + \chi(X_{\bareta};\Qell)
$$
and call this number the \emph{total dimension of $X/S$}. With these conventions, Conjecture \ref{conj:Bloch} reads
$$
\Bl(X/S)
  =
  -\dimtot(X/S).
$$
\end{rem}

\sssec{} \label{sssec:state of art BCC}

Several cases of Conjecture \ref{conj:Bloch} have been established:
\begin{itemize}
\item
in his seminal paper \cite{bl87}, Bloch proves it for $X/S$ a family of curves.

\item relative dimension zero: in this case, BCC is the conductor discriminant formula from algebraic number theory.

\item when $S$ is of pure characteristic zero; this case can be extracted from work of Kapranov \cite{kapranov}. Also, this case follows from \cite{ks04} by combining their result with Hironaka resolution of singularities in characteristic zero.
    
\item 
in \cite{ks04}, Kato and Saito use logarithmic algebraic geometry to prove BCC under the hypothesis that $(X_s)_{\on{red}} \hto X$ is a normal-crossing divisor.

\item in \cite{sai21}, Saito develops the theory of characteristic cycles in positive characteristic, obtaining the proof of Conjecture \ref{conj:Bloch} in pure-characteristic, under the hypothesis that $X/S$ is projective.

\item In \cite{ab00}, Abbes highlights that a similar formula makes sense for all $S$-endomorphisms of $X$ and generalizes the proof of Bloch to give a formula valid for arithmetic surfaces with an $S$-automorphism. This point of view is adopted in \cite{ks04} too.

\end{itemize}
\sssec{}

In particular, the first item, combined with \cite{or03}, implies that the Deligne--Milnor conjecture is true in relative dimension $1$.
In fact, Orgogozo shows that the formula proposed by Bloch coincides with that proposed by Deligne in the situation of an isolated singularity.
However, like the Deligne--Milnor conjecture, Conjecture \ref{conj:Bloch} remains open in general.

\ssec{A generalization of Bloch conductor conjecture}

In this section, we state the second and third main results of this paper, which are a contribution to the list of Section \ref{sssec:state of art BCC}.
Our methods are well suited to tackle the more general situation where the map $X/S$ is not necessarily proper. This is the setting of the generalized Bloch conductor conjecture, stated below.

\sssec{}

It turns out that the number $\Bl(X/S)$ can be defined even when $p: X\to S$ is not proper: one just needs the singular locus of $p$ to be proper over $S$.
Accordingly, as suggested in \cite{or03}, Bloch conductor conjecture may be generalized as follows.

In what follows, we will use the notation $\Phi_{X/S}$ as a shortcut for $\Phi_p(\Ql{X})$, i.e. the sheaf of $\ell$-adic vanishing cycles with coefficients in $\Ql{X}$. 
Notice that $\Phi_{X/S}$ is supported on the singular locus $Z$ of $X/S$.
In particular, with our assumption:
$$
  \uH^*_{\et}(X_s,\Phi_{X/S})
  \simeq
  \uH^*_{\et,c}(X_s,\Phi_{X/S}),
$$
where the right hand side denotes cohomology with compact support.

\begin{conj}[Generalized Bloch conductor formula]
    Let $p:X\to S$ be flat morphism of finite type, generically smooth. Assume that $X$ is regular and that its singular locus is proper over $S$. 
    Then
\begin{equation} \label{eqn:gBCC in intro}
  \Bl(X/S)
      =
      -\dimtot \bigt{\uH^*_{\et}(X_s,\Phi_{X/S})}
      :=
      -\chi \bigt{\uH^*_{\et}(X_s,\Phi_{X/S})}- \Sw \bigt{\uH^*_{\et}(X_s,\Phi_{X/S})}.   
\end{equation}
\end{conj}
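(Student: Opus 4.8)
The plan is to establish the conjecture under the additional assumption that the inertia group $\In$ acts \emph{unipotently} on $\uH^*_{\et}(X_s,\Phi_{X/S})$; this already yields Theorem~\ref{mainthm:DM-unip}, since by Orgogozo's theorem the Deligne--Milnor formula corresponds to the isolated-singularity case of \eqref{eqn:gBCC in intro}. The strategy is: \emph{categorify, then realize}. First, since $p$ is flat and $X$ is regular, the special fibre $X_s = V(p^*\pi)$ (where $\pi$ is a uniformizer of $\O_S$) is a Cartier divisor in a regular scheme, so by Orlov's theorem its category of singularities $\Sing(X_s)$ is equivalent to the category of matrix factorizations $\MF(X,p^*\pi)$. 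Write $\T_{X/S}$ for this small $\O_S$-linear dg-category; as it is supported on the singular locus $Z$ of $p$ and $Z$ is proper over $S$, $\T_{X/S}$ is a \emph{proper} dg-category over $S$. Using the Kato--Saito presentation of $\Bl(X/S)$ as a localized self-intersection of the diagonal in $X\times_S X$ (Theorem~\ref{thm:KS-interpretation of Bl}), together with the fact that $\O_{\Delta_X}\otimes^{\bbL}_{X\times_S X}\O_{\Delta_X}$ computes relative Hochschild homology, one shows that $\Bl(X/S)$ equals, up to an overall sign, the Euler characteristic of $\HH(\T_{X/S}/S)$ --- equivalently, the non-commutative self-intersection number $\langle\id_{\T_{X/S}},\id_{\T_{X/S}}\rangle$. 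Concretely, the $S$-relative Hochschild homology of $\MF(X,p^*\pi)$ is computed by the Koszul complex $(\Omega^\bullet_{X/S},\,d(p^*\pi)\wedge(-))$, which is exactly the complex underlying the localized top Chern class of $\Omega^1_{X/S}$ in the Kato--Saito formula; this is the categorification of Bloch's number.

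Next, apply the {\TV} non-commutative $\ell$-adic Chern character $\rlS$ to $\T_{X/S}$. Since $\rlS$ is lax symmetric monoidal and compatible with traces of endofunctors of proper dg-categories, it carries the self-intersection number above to the Euler characteristic of the constructible $\ell$-adic complex $\rlS(\T_{X/S})$ on $S$ --- equivalently, of the $\In$-representation obtained by restriction to $\bareta$ --- so that $\Bl(X/S) = \pm\,\chi\bigt{\rlS(\T_{X/S})}$. The crucial step is then to identify $\rlS(\T_{X/S})$, as an $\In$-equivariant complex, with $\uH^*_{\et}(X_s,\Phi_{X/S})$ up to a shift and a Tate twist. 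When $S$ has equal characteristic this is essentially the Blanc--Robalo--{\TV} comparison between motivic realizations of singularity categories and vanishing cycles, applied to the global function $f=p^*\pi$; the real work lies in the mixed-characteristic case, where one must develop the six-functor formalism and the non-commutative $\ell$-adic realization over the trait $S$ itself --- rather than over an ambient base field --- extend that comparison to this setting, and, most delicately, match the canonical $S^1$-action on $\HH(\T_{X/S}/S)$ with the monodromy, i.e.\ the $\In$-action, on the vanishing cycles.

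Granting the comparison, the conclusion follows. If $\In$ acts unipotently, its wild inertia subgroup --- a pro-$p$ group --- acts through a finite quotient, and a finite group acting unipotently on a $\Qell$-vector space acts trivially; hence $\Sw\bigt{\uH^*_{\et}(X_s,\Phi_{X/S})}=0$, so the right-hand side of \eqref{eqn:gBCC in intro} reduces to $-\chi\bigt{\uH^*_{\et}(X_s,\Phi_{X/S})}$, and combining with the previous steps gives $\Bl(X/S) = \pm\chi\bigt{\uH^*_{\et}(X_s,\Phi_{X/S})} = -\dimtot\bigt{\uH^*_{\et}(X_s,\Phi_{X/S})}$, the signs being reconciled by the degree shift in the realization comparison and a Koszul sign on the algebraic side. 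The main obstacle is the $\In$-equivariant comparison in mixed characteristic: it is precisely the inability of the matrix-factorization construction to control wild ramification --- and, more subtly, to recover the full inertia action rather than its unipotent part --- that forces the unipotence hypothesis, and lifting it requires a separate non-commutative interpretation of the Swan conductor, equivalently of the total dimension $\dimtot(X/S)$, which is the subject of the companion paper \cite{bp24}.
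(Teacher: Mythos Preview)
Your overall shape (categorify Bloch's number, then realize $\ell$-adically, then invoke BRTV to get vanishing cycles, with unipotence killing the Swan term) matches the paper's philosophy, but the key algebraic step has a genuine gap and the route differs from the paper in an essential way.

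The gap is the claim that the \emph{$S$-relative} Hochschild homology of $\MF(X,p^*\pi)$ is computed by $(\Omega^\bullet_{X/S},\,d(p^*\pi)\wedge(-))$ and that this recovers Bloch's number. First, $d_{X/S}(p^*\pi)=0$ since $\pi$ comes from $S$, so that complex has zero differential and cannot be what you want. The usual HKR-type formula for matrix factorizations requires the ambient scheme to be \emph{smooth over the base}; here $X$ is only regular, not smooth over $S$, and in mixed characteristic there is no field underneath to fall back on. Relatedly, you assert that $\T_{X/S}$ is proper over $S$ and that $\rl_S$ is ``compatible with traces of endofunctors of proper dg-categories''; but $\T=\Sing(X_s)$ is not known to be saturated (smooth and proper) over $A=\O_S$, so there is no trace over $A$ to speak of. This is exactly the obstruction the paper works around: it replaces $A$ by the non-commutative base $\B=\Sing(s\times_S s)$, over which $\T$ \emph{is} saturated (via the K\"unneth equivalence $\T^{\op}\otimes_\B\T\simeq\Sing(X\times_S X)$), and then shows separately that $\rl_S(\T)$ is dualizable over $\rl_S(\B)$ with an explicit duality datum.

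The paper's categorification of $\Bl(X/S)$ is also structurally different from yours. It does not compute $\HH(\T/A)$ at all; instead it categorifies Kato--Saito's stable-Tor formula directly, by constructing a quasi-smooth \emph{derived enhancement} $d\delta_X$ of the diagonal $X\hookrightarrow X\times_S X$ and showing that $d\delta_X^*$ on singularity categories, followed by a degree map, induces exactly $[[\Delta_X,-]]_S$ on $\uG_0$. Producing $d\delta_X$ requires extra structure: either a presentation of $X$ as a hypersurface in a smooth $S$-scheme (mixed characteristic case), or the retraction $S\to s$ (pure characteristic case). This is why the paper's unipotent result carries the additional hypersurface hypothesis in mixed characteristic; your proposal, by omitting that hypothesis, is implicitly claiming more than the paper proves, and the missing ingredient is precisely the step where your HH/Koszul argument breaks down.
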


When $p:X\to S$ is proper
$$
  \Sw \bigt{\uH^*_{\et}(X_s,\Phi_{X/S})}
  =
  \Sw \bigt{\uH^*_{\et}(X_{\bareta},\Qell)}
$$
and 
$$
  \chi \bigt{\uH^*_{\et}(X_s,\Phi_{X/S})}
  =
  -\chi(X_s;\Qell)+\chi(X_{\bareta};\Qell),
$$
so that the generalized Bloch conductor formula agrees with the original one in this case.

\sssec{}

In the main body of the paper, we will prove this conjecture in the following two cases.

\begin{mainthm} \label{mainthm:hypersurface}
The generalized Bloch conductor conjecture holds true provided that the following two assumptions are satisfied:

\begin{itemize}

\item
 $X$ embeds as a hypersurface in a smooth $S$-scheme;
\item
the inertia group acts unipotently on $\Phi_{X/S}$.
\end{itemize}

\end{mainthm}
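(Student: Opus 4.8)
The plan is to transport the left-hand side $\Bl(X/S)$ through the non-commutative Chern character into $\ell$-adic cohomology: the hypersurface hypothesis is what lets us present the relevant dg-category as matrix factorizations on a \emph{smooth} ambient space, where the comparison with vanishing cycles is available, and the unipotence hypothesis is used only at the end, to discard the wild contribution.

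\emph{Step 1 (categorification and matrix factorizations).} By Theorem~\ref{thm:KS-interpretation of Bl}, $\Bl(X/S)$ equals, up to a sign, the Euler characteristic of the Hochschild homology of a dg-category $\ccC$ canonically attached to $X/S$ — the categorification of the Kato--Saito localized self-intersection of the diagonal — which one identifies with the singularity category $\sing(X_s)$ of the special fibre. Since $p$ is generically smooth, $\ccC$ is supported over the closed point $s$; since the singular locus $Z$ of $p$ is proper over $S$, $\ccC$ is a proper dg-category, so its Chern character and Euler characteristic make sense. Now the hypersurface assumption enters: writing $X = V(g)\hookrightarrow P$ with $P$ smooth over $S$ and $g$ a global equation, one has $X_s = V(\bar g)\subset P_s$ with $P_s$ smooth over $s$, and Orlov's equivalence gives $\ccC\simeq\sing(X_s)\simeq\mf(P_s,\bar g)$, a genuine category of matrix factorizations of a potential on a smooth scheme whose critical locus is exactly $Z$.

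\emph{Step 2 (applying the Chern character).} Next I would feed $\mf(P_s,\bar g)$ into the {\TV} non-commutative $\ell$-adic Chern character and invoke the comparison theorems of Blanc--Robalo--{\TV} and Pippi: the $\ell$-adic realization of $\mf(P_s,\bar g)$ is, up to a Tate twist and a shift, the $\In$-equivariant $\ell$-adic complex $\uH^*_{\et}(X_s,\Phi_{\bar g})$, where $\Phi_{\bar g}$ is the vanishing-cycles complex of the potential $\bar g$; a further comparison identifies $\Phi_{\bar g}$ with $\Phi_{X/S}=\Phi_p(\Ql{X})$, the vanishing cycles of the family $p$. Taking Euler characteristics — the cohomological grading on the Hochschild side matching that of $\uH^*_{\et}(X_s,\Phi_{X/S})$ — and combining with Theorem~\ref{thm:KS-interpretation of Bl}, one obtains
\begin{equation*}
  \Bl(X/S)\;=\;-\,\chi\bigt{\uH^*_{\et}(X_s,\Phi_{X/S})}.
\end{equation*}

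\emph{Step 3 (removing the wild term).} Finally the unipotence hypothesis enters, and only here: if the inertia group acts unipotently on $\Phi_{X/S}$ then its action is tame, hence $\Sw\bigt{\uH^*_{\et}(X_s,\Phi_{X/S})}=0$ and $\dimtot\bigt{\uH^*_{\et}(X_s,\Phi_{X/S})}=\chi\bigt{\uH^*_{\et}(X_s,\Phi_{X/S})}$, so the displayed identity becomes exactly \eqref{eqn:gBCC in intro}. The non-commutative Chern character is by its nature insensitive to the Swan conductor, which is precisely why the general (non-unipotent) statement requires the additional non-commutative interpretation of the total dimension obtained in \cite{bp24}.

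\emph{Main obstacle.} The technical heart is Step~2: one must match the {\TV} Chern character of $\mf(P_s,\bar g)$ — together with its Tate twists and its $S$-linear, monodromy-equivariant structure, not merely its underlying complex — against vanishing cohomology on the nose, and, as an essential input, establish the comparison $\Phi_{\bar g}\simeq\Phi_{X/S}$ between the vanishing cycles of the Landau--Ginzburg potential and those of $p$, which is delicate in mixed characteristic. One also has to globalize Orlov's equivalence and the matrix-factorization presentation beyond the affine or isolated-singularity situation, using only the properness of $Z$ over $S$ to control supports.
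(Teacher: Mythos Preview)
Your outline has the right shape, but there is a genuine gap in Step~1 that propagates through Step~2.

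The assertion that $\Bl(X/S)$ is, up to sign, the Euler characteristic of the Hochschild homology of $\Sing(X_s)$ over $A$ is not justified and in fact cannot be made to work directly: $\Sing(X_s)$ is \emph{not} smooth and proper over $\Perf(S)$ --- its Hom-complexes are eventually $2$-periodic, hence not perfect over $A$ --- so there is no trace of the identity available in the naive sense, and Theorem~\ref{thm:KS-interpretation of Bl} says nothing of the sort. What makes the argument go is the $\sB$-linear framework of \cite{tv22}: one regards $\sT=\Sing(X_s)$ as a module over $\sB=\Sing(s\times_S s)$, where it \emph{is} saturated, and the K\"unneth equivalence $\sT^{\op}\otimes_{\sB}\sT\simeq\Sing(X\times_S X)$ (Theorem~\ref{thm:TV}) identifies the coevaluation with $[\Delta_X]$. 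This is how Bloch's number enters the picture, not as an ordinary Euler characteristic. Your proposal never mentions $\sB$, so the bridge from $\Bl(X/S)$ to any trace is simply missing.

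Relatedly, your use of the hypersurface hypothesis is not the one that drives the proof. The paper does not rely on the Orlov equivalence $\Sing(X_s)\simeq\MF(P_s,\bar g)$; rather, the presentation $X\hookrightarrow P$ is used to build a quasi-smooth \emph{derived enhancement} $d\delta_X: K(X,L,0)\to X\times_S X$ of the diagonal (the classical $\delta_X$ is not quasi-smooth), and pullback along $d\delta_X$ is what categorifies the Kato--Saito localized product (Theorem~\ref{thm: KS loc int prod = HK_0 int with the diagonal}). One then has to verify that, after $\ell$-adic realization, this integration map $\int_{X/S}$ matches the evaluation of the $\rl_S(\sB)$-linear duality on $\rl_S(\sT)$ (Proposition~\ref{prop:int-vs-ev}); this compatibility is the technical heart and is absent from your sketch.

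Finally, unipotence is not used merely to set $\Sw=0$ at the end. The $\ell$-adic realization of $\sT$ is the \emph{inertia-invariant} complex $\uH^*_{\et}(X_s,\Phi_{X/S})^{\In}$, not the full equivariant complex, and the trace of the identity on it equals $\chi\bigl(\uH^*_{\et}(X_s,\Phi_{X/S})\bigr)$ only because taking $\In$-fixed points is symmetric monoidal on complexes with \emph{unipotent} $\In$-action (\cite[Lemma~5.2.5]{tv22}). Without this, your displayed equality in Step~2 already fails before Step~3 is reached.
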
 

\begin{mainthm} \label{mainthm:pure-char}
Assume that $S$ is of pure characteristic.
Then the generalized Bloch conductor conjecture holds true provided that the inertia group acts unipotently on $\Phi_{X/S}$.
\end{mainthm}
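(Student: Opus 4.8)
The plan is to bootstrap from Theorem B by using the fact that, in pure characteristic, any regular $S$-scheme can be locally embedded as a hypersurface after suitable modification, and then to globalize via the localization properties of both sides of the generalized Bloch conductor formula. First I would recall that $\Bl(X/S)$ is defined (following Kato--Saito, Theorem \ref{thm:KS-interpretation of Bl}) as a sum of local contributions $\Bl(X/S) = \sum_{x \in Z_s} \Bl_x(X/S)$ supported at the closed points of the special fiber of the singular locus $Z$, and likewise the right-hand side $\dimtot(\uH^*_{\et}(X_s,\Phi_{X/S}))$ decomposes, since $\Phi_{X/S}$ is supported on $Z$ and $Z$ is proper over $S$, as a sum over the same closed points of $\dimtot$ of the stalks of $\Phi_{X/S}$. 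Thus it suffices to prove the formula étale-locally around each such point $x$, i.e. to establish $\Bl_x(X/S) = -\dimtot(\Phi_{X/S,x})$.

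Next I would reduce to the hypersurface case. Since $S$ has pure characteristic, say $S = \Spec(k[\![t]\!])$ with $k$ separably closed, the complete local ring $\widehat{\CO}_{X,x}$ is a regular $k[\![t]\!]$-algebra. Here I would invoke embedded resolution / Cohen structure theory: $\widehat{\CO}_{X,x}$ is a power series ring over $k[\![t]\!]$ modulo nothing (it is regular), but the point is to realize the pair $(X,Z)$, or rather the morphism $p$, in a form to which Theorem B applies. The key geometric input is that in equal characteristic a regular excellent scheme flat over a trait, with the singular locus of $p$ finite over $S$, can --- after base change along a ramified cover of $S$ and/or an auxiliary deformation --- be presented étale-locally as a hypersurface in a smooth $S$-scheme; concretely one writes the map $p$ near $x$ as the restriction of a smooth map in one more variable, using that $\Omega^1_{X/S}$ has a presentation with an equal number of generators and relations locally. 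Both $\Bl_x$ and $\dimtot(\Phi_{X/S,x})$ are insensitive to this étale-local replacement (the former by its Kato--Saito characterization as a localized intersection number, the latter because vanishing cycles and their Swan/Euler invariants are étale-local and, for the Swan part, unchanged under the unipotent hypothesis which is preserved). Then Theorem B gives the identity for the modified model, and I descend back.

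Alternatively --- and this may be cleaner --- I would run the categorical argument of Theorem B directly in the pure-characteristic setting, where the hypersurface hypothesis was used only to produce a matrix-factorization / $2$-periodic model of the category of singularities $\Sing(X_s)$; in equal characteristic one has Hironaka resolution available, so $\Sing(X_s)$ can be generated by objects coming from a normal-crossings model, and one can appeal to the Kato--Saito computation (valid for NCD special fiber, per Section \ref{sssec:state of art BCC}) combined with \TV{} Chern character additivity along the recollement pieces, again under unipotence to control the wild part. The main obstacle, in either route, is precisely the replacement of the matrix-factorization input from Theorem B: in the hypersurface case the noncommutative model of the vanishing-cycles category is explicit and $2$-periodic, whereas in the general pure-characteristic case one must instead extract it from resolution of singularities and check that the resulting (a priori unbounded) singularity category still has a well-behaved Chern character whose pairing computes $\Bl_x$. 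Handling the possible ramification base-change --- ensuring the formula is stable under pullback along $S' \to S$ and that the unipotence hypothesis descends --- is the remaining technical point, but this is controlled by the standard behaviour of Swan conductors and total dimension under tame base change.
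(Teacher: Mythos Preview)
Your proposal has a genuine gap and misses the paper's approach. In Route~1 you decompose $\Bl(X/S) = \sum_{x \in Z_s} \Bl_x(X/S)$ and likewise $\dimtot$ as sums over closed points of $Z_s$, but the generalized Bloch conductor conjecture only assumes $Z$ proper over $S$, not finite; when $\dim Z_s > 0$ there is no such pointwise decomposition and the \'etale-local hypersurface reduction does not get off the ground. (For isolated singularities this is essentially Orgogozo's reduction, already covered by Theorem~\ref{mainthm:hypersurface}.) Route~2, via resolution and the Kato--Saito NCD result, abandons the categorical method entirely and would be an independent argument whose compatibility with the non-commutative Chern character you have not established.

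The paper instead exploits a feature specific to pure characteristic: the retraction $S \to s$. This yields, first, a \emph{canonical} quasi-smooth derived enhancement $d\delta_X: K(X,0) \to X \times_S X$ of the diagonal, obtained as the base-change of the diagonal $X \to X \times_s X$ along $X \times_S X \to X \times_s X$; the former is quasi-smooth because $X$ is smooth over $s$. One thus gets a dg-functor $\int_{X/S}^{\on{dg}}: \Sing(X \times_S X) \to \Sing\bigt{K(S,0)}_s$ categorifying the Kato--Saito product without any hypersurface presentation. Second, the retraction makes $\sB = \Sing(s \times_S s)$ \emph{symmetric} monoidal (indeed $\sB \simeq \Perf\bigt{k[u,u^{-1}]}$), so $\HH_*(\sB/A) \to \sB$ admits a retraction; the integration functor is then identified with the composite $\sT^{\op} \otimes_\sB \sT \to \HH_*(\sB/A) \to \sB$, and the argument concludes as in Section~\ref{ssec:proof of Thm B} using Proposition~\ref{prop:l-adic realization sing is dualizable over B} and the unipotence hypothesis.
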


\begin{rem}
As in \cite{ab00}, we can consider a generalized Bloch conductor formula where endomorphisms other than the identity are allowed. Thus, we will actually prove a more general version of Theorem \ref{mainthm:hypersurface}, see Theorem \ref{main theorem 3}.
A similar modification works for Theorem \ref{mainthm:pure-char}, and it is left to the reader.
\end{rem}

\sssec{}

It turns out that Theorem \ref{mainthm:DM-unip} is an easy consequence of Theorem \ref{mainthm:hypersurface}.
This follows from \cite{or03} and from the fact that we can replace $U$ with any Zariski neighborhood of $x$: since $U$ is locally a hypersurface in a smooth $S$-scheme, Theorem \ref{mainthm:hypersurface} applies in this case.

\sssec{}

In the remainder of this introduction, we outline the proof of Theorem \ref{mainthm:hypersurface} (the proof of Theorem \ref{mainthm:pure-char} is largely parallel). There are two main steps, roughly corresponding to the two sides of \eqref{eqn:gBCC in intro}.

\ssec{Categorifying Bloch intersection number}

Our main construction shows that $\Bl(X/S)$ is induced by a dg-functor of differential-graded (dg) categories upon taking $K$-theory. This construction requires $X$ to be a presented as a hypersurface in a smooth $S$-scheme, but no unipotence assumption of the action of the inertia group.

\sssec{} 
Let $\uG_0(Y)$ denote the \emph{$\uG$-theory} of a scheme $Y$.
The starting point is the following crucial result.

\begin{thm}[{\cite[\S 5.1]{ks04}}]\label{thm:KS-interpretation of Bl}
For $n \gg 0$, the Tor-sheaves $\ul\Tor_{n}^{X \times_S X}(\Delta_X,E)$ are supported on the singular locus $Z$ of $X/S$. 
Moreover, the difference
$$
  (-1)^n 
  \Big[ 
  \ul\Tor_{n}^{X \times_S X}(\Delta_X,E)
  \Big]
  +
  (-1)^{n+1} 
  \Big[ 
  \ul\Tor_{n+1}^{X \times_S X}(\Delta_X,E)
  \Big]
$$
stabilizes for $n\gg 0$, providing a uniquely defined element of $G_0(Z)$.
\end{thm}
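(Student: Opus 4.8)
I would handle the two assertions in turn. \emph{Eventual support on $Z$.} Let $\delta\colon X\to X\times_S X$ be the diagonal, so $\Delta_X=\delta_*\CO_X$, and set $U:=X\setminus Z$. Since $X/S$ is smooth over $U$, the projection $U\times_S U\to U$ is smooth and $\delta|_U\colon U\to U\times_S U$ is a section of it, hence a regular closed immersion whose codimension $c$ is the relative dimension of $X/S$. Thus the restriction of $\Delta_X$ to the open $U\times_S U\subseteq X\times_S X$ is perfect of Tor-amplitude $[-c,0]$, so $\ul\Tor^{X\times_S X}_i(\Delta_X,E)$ vanishes on $U\times_S U$ for $i>c$. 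As each such Tor-sheaf is supported on $\Delta_X=\delta(X)$, for $i>c$ it is supported on $\delta(X)\setminus\delta(U)=\delta(Z)$, and we view it through $\delta$ as a coherent sheaf on $Z$ (so ``$n\gg0$'' just means $n>c$).

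\emph{Stabilization.} Writing $b_i:=(-1)^i[\ul\Tor^{X\times_S X}_i(\Delta_X,E)]\in G_0(Z)$, the displayed quantity is $b_n+b_{n+1}$; so it suffices to prove $[\ul\Tor_i]=[\ul\Tor_{i+2}]$ in $G_0(Z)$ for $i\gg0$, since then $b_i=b_{i+2}$, whence $b_n+b_{n+1}=b_{n+1}+b_{n+2}$ is eventually constant and its value is the claimed class. The heart of the matter is therefore a $2$-periodicity of the Tor-sheaves, which I would obtain from matrix-factorisation behaviour of the diagonal. After localising and completing at a closed point of $\delta(Z)$, and using that $X$ is regular and flat over the trait $S$, present $X$ as a hypersurface $V(f)$ in a scheme $P$ smooth over $S$; then $X\times_S X=V(f\boxtimes1,\,1\boxtimes f)\hookrightarrow P\times_S P$ is a codimension-two complete intersection in the regular scheme $P\times_S P$. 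The point I would establish is that $\CO_{\Delta_X}$ has \emph{complexity one} over $\CO_{X\times_S X}$: for a generic $\CO_{P\times_S P}$-linear combination $\ell$ of $f\boxtimes1$ and $1\boxtimes f$, the diagonal $\Delta_P$ — perfect over $P\times_S P$ since $P/S$ is smooth — meets $V(\ell)$ properly, so $\CO_{\Delta_X}=\CO_{\Delta_P}\otimes^{\bbL}_{\CO_{P\times_S P}}\CO_{V(\ell)}$ is perfect over the hypersurface $\CO_{V(\ell)}$, and hence the projective support variety of $\CO_{\Delta_X}$ over $\CO_{X\times_S X}$ is a single point. By Gulliksen's theorem $\Ext^\bullet_{\CO_{X\times_S X}}(\Delta_X,E)$ is finitely generated over the polynomial ring on the two cohomology operators; complexity one says this module is supported on a line, so (its torsion over that line being concentrated in bounded degrees) the corresponding operator acts bijectively in large degrees — an Eisenbud-type $2$-periodicity of $\Ext$, hence of $\Tor$ by duality over the Gorenstein ring $\CO_{X\times_S X}$. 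Thus $\ul\Tor^{X\times_S X}_i(\Delta_X,E)\simeq\ul\Tor^{X\times_S X}_{i+2}(\Delta_X,E)$ for $i\gg0$, which gives what we need.

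\emph{Main obstacle.} The delicate ingredient is the complexity-one statement — and, beneath it, the reduction to a hypersurface, which can fail (imperfect residue fields, ramified mixed characteristic). A robust substitute is to descend to the special fiber: $X_s$ is a local complete intersection over the residue field $\kappa(s)$ of embedding codimension one, i.e.\ locally a hypersurface in a scheme smooth over $\kappa(s)$, so the argument above applies to $\ul\Tor^{X_s\times_s X_s}_\bullet(\Delta_{X_s},E_s)$; one then transports the periodicity back along $-\otimes^{\bbL}_{\CO_S}\kappa(s)$, the nontrivial point being to control the $\pi$-torsion of the Tor-sheaves in the resulting six-term sequences. Alternatively one simply invokes Kato--Saito's original argument, which evaluates the relevant localised top Chern class of $\Omega^1_{X/S}$ directly. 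Everything else — the support statement, and the manipulations in $G_0(Z)$ — is routine.
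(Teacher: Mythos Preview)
The paper does not prove this theorem; it is quoted from Kato--Saito and used as input. So there is no proof in the paper to compare against in the generality you address.

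That said, in the hypersurface case the paper does re-derive the eventual $2$-periodicity, inside the proof of Proposition~\ref{prop: KS loc int prod = HK_0 int with the diagonal}, by a route different from yours. Rather than invoking Gulliksen's cohomology operators and complexity, the paper factors the diagonal as $\delta_X = d\delta_X \circ \iota$ with $d\delta_X$ quasi-smooth, writes down the Koszul-type resolution of $\iota_*\CO_X$ in $\Coh\bigt{K(X,L,0)}$, and observes that tensoring $d\delta_X^*E$ (which is bounded coherent, since $d\delta_X$ is quasi-smooth) against this resolution yields a totalisation that is literally $2$-periodic, up to a twist by $L$, in large degrees --- this is precisely the matrix factorisation $\Xi(d\delta_X^*E)$. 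One obtains a \emph{global} isomorphism $\ul\Tor_n \simeq \ul\Tor_{n-2} \otimes L$ with no localisation or completion, and then \cite[Lemma~5.1.3]{ks04} gives $[\restr L Z]=1$ in $\uG$-theory, so the classes stabilise. Your cohomology-operator/complexity approach is a legitimate alternative, closer to classical complete-intersection homological algebra, and what it buys is that it does not presuppose a global hypersurface presentation. The cost is that your argument is pointwise (after localising and completing), and you have not explained how the local periodicity isomorphisms assemble into an equality in $\uG_0(Z)$: local isomorphism of stalks does not by itself force equality of $\uG_0$-classes, and the Eisenbud operator depends on the chosen presentation, so gluing is genuinely at issue. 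Your support argument is correct and is what anyone would write.
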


\sssec{}

In particular, we get a well defined map
\begin{equation}\label{defn:KS pairing}
 [[\Delta_X,-]]_S: G_0(X \times_S X)\rightarrow G_0(s)\simeq \bbZ   
\end{equation}
$$
  [E]
  \mapsto 
  (-1)^n 
  \deg
  \Big[ 
  \ul\Tor_{n}^{X \times_S X}(\Delta_X,E)
  \Big]
+
(-1)^{n+1} 
\deg
\Big[ 
\ul\Tor_{n+1}^{X \times_S X}(\Delta_X,E)
 \Big]
  \hspace{0.5cm} \text{(for } n\gg 0 \text{)}.
$$
The expression on the right is called the \emph{degree of the difference of the stable Tors} and it can be used to recover Bloch intersection number as follows.

\begin{thm}[{\cite[\S 5.1]{ks04}}]
The following identity holds
$$
  [[\Delta_X,\Delta_X]]_S=\Bl(X/S).
$$ 
\end{thm}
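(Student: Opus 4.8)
The plan is to unwind both sides of the claimed equality and identify them term-by-term with the definition of Bloch's intersection number, $\Bl(X/S) = (\Delta_X,\Delta_X)_S$, as the top localized Chern number of $\Omega^1_{X/S}$. First, I would recall Bloch's original definition: for the diagonal embedding $\Delta\colon X \hto X\times_S X$, one considers the self-intersection class supported on the singular locus $Z$ (where $\Delta$ fails to be regular, i.e.\ where $X/S$ is not smooth), and Bloch shows (via the conormal sheaf $\Omega^1_{X/S} = \Delta^*\mathcal{I}_\Delta/\mathcal{I}_\Delta^2$) that this is computed by the localized top Chern class $c^Z_n(\Omega^1_{X/S}) \cap [X]$, pushed to $S$ and evaluated as a degree in $G_0(s)\simeq\bbZ$. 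The key point is that, away from $Z$, the diagonal is a regular immersion of codimension $n$, so the Koszul resolution of $\Delta_X$ over $X\times_S X$ is a perfect complex there, and $\ul\Tor_j^{X\times_S X}(\Delta_X,\Delta_X)$ vanishes for $j>n$ over $X\setminus Z$; this is exactly the support statement in Theorem~\ref{thm:KS-interpretation of Bl} applied to $E=\Delta_X$.

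Next I would compute $[[\Delta_X,\Delta_X]]_S$ using the self-Tor spectral sequence / the excess intersection formalism. On the open locus $X\setminus Z$, the normal bundle $N$ of the diagonal has rank $n$ and the self-intersection is $c_n(N)$; since $N \simeq (\Omega^1_{X/S})^\vee$ there, we get $c_n(N) = (-1)^n c_n(\Omega^1_{X/S})$. The subtlety—and this is where the stabilization in Theorem~\ref{thm:KS-interpretation of Bl} is essential—is that over $Z$ itself the diagonal is \emph{not} a regular immersion, so the naive Koszul complex is not a resolution and the individual $\ul\Tor_j$ do not stabilize; only the alternating difference $(-1)^n[\ul\Tor_n] + (-1)^{n+1}[\ul\Tor_{n+1}]$ becomes constant in $G_0(Z)$ for $n\gg 0$. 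I would argue that this stable class is precisely the localized self-intersection class $(\Delta_X,\Delta_X)_S$ in the sense of Kato--Saito's §5.1: the alternating sum of Tors is, by construction, the image under $\Delta^!$ (refined Gysin) of $[\Delta_X]$, and $\Delta^![\Delta_X] = c^Z_{\mathrm{top}}(\Omega^1_{X/S})\cap[X]$ by the standard excess-intersection argument (MacPherson's graph construction / the deformation to the normal cone), localized to $Z$ because that is where the excess lives.

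Concretely, the chain of identifications I would spell out is: $[[\Delta_X,\Delta_X]]_S = \deg_s\,\Delta^![\Delta_X] = \deg_s\bigl(c^Z_n(\Omega^1_{X/S})\cap[X]\bigr) = (\Delta_X,\Delta_X)_S = \Bl(X/S)$, where the first equality is the definition \eqref{defn:KS pairing} together with the fact that the degree of the difference of stable Tors computes $\Delta^!$, the second is the localized Riemann--Roch / excess formula of Kato--Saito, and the last two are Bloch's definition. Since this is cited as \cite[\S 5.1]{ks04}, the appropriate move in the paper is to extract the statement verbatim from there rather than reprove it; so in practice the "proof" amounts to matching our notation $[[\Delta_X,-]]_S$ with Kato--Saito's localized intersection pairing and citing their Proposition comparing it with Bloch's number.

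The main obstacle is the compatibility at the level of \emph{definitions}: Bloch works with a localized Chern class while Kato--Saito work with an alternating sum of Tor-sheaves, and verifying that the "degree of the difference of the stable Tors" genuinely computes the refined Gysin pullback $\Delta^!$ requires the stabilization input of Theorem~\ref{thm:KS-interpretation of Bl} plus a careful analysis of the self-intersection spectral sequence over the non-regularly-embedded locus $Z$. Once one knows (from \cite{ks04}) that this alternating sum is well-defined and equals the localized top Chern number, the identification with $\Bl(X/S)$ is immediate from Bloch's definition. So the entire content is already in \cite[\S 5.1]{ks04}, and the statement here is a restatement for later use.
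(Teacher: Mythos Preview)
Your final paragraph is the correct read: the paper does not prove this theorem at all. It is stated with attribution to \cite[\S 5.1]{ks04}, and immediately afterwards the authors write that they ``take $[[\Delta_X,\Delta_X]]_S$ as the definition of Bloch intersection number.'' So there is no proof to compare against; the statement is imported from Kato--Saito and then adopted as a definition for the remainder of the paper.

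The sketch you give of how Kato--Saito's argument runs (stable Tors compute a refined Gysin map, excess intersection gives the localized top Chern class of $\Omega^1_{X/S}$, which is Bloch's number by definition) is a reasonable summary of what happens in \cite{ks04}, but it is extraneous here. The only thing the paper requires of the reader is to accept the citation and the subsequent notational convention.
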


We will refer to the map $ [[\Delta_X,-]]_S$ as the \emph{Kato--Saito localized intersection product} and take $ [[\Delta_X,\Delta_X]]_S$ as the definition of Bloch intersection number.

\sssec{}

Now assume that $p:X \to S$ can be written as a hypersurface in a smooth $S$-scheme $P$, that is, $X$ is the zero locus of a section of a line bundle $L$ over $P$. 
We will use this extra piece of structure to categorify $ [[\Delta_X,-]]_S$.

\sssec{}

The appearance of the stable Tors in \eqref{defn:KS pairing} suggests that the dg-functor in question could be expressed via \emph{matrix factorizations} or, more generally, dg-categories of singularities. Recall that the dg-category of singularities $\Sing(W)$ of a bounded derived scheme $W$ is the quotient of $\Coh(W)$ by its full subcategory $\Perf(W)$, see Section \ref{sssec: def sing cat}.
In our case, consider $\Sing(X \times_S X)$: applying $\HK^{\bbQ}$ (homotopy-invariant non-connective rational K-theory), we obtain a natural map
$$
\uG_0(X \times_S X)
\to 
\HK_0^{\bbQ}(X \times_S X).
$$

\sssec{}

Hence, we look for a dg-functor out of $\Sing(X \times_S X)$. A naive guess is the pullback along the diagonal $\delta_X: X \hto X \times_S X$. This does not work, since $\delta_X$ is not quasi-smooth. To fix this, we use the presentation to define a derived thickening 
$$ 
d\delta_X: K(X,L,0) \hto X \times_S X
$$ 
of the diagonal, where $K(X,L,0)$ is the derived self-intersection of the zero section of $\restr L X$. Contrarily to $\delta_X$, the map $d\delta_X$ is quasi-smooth, hence the dg-functor $d\delta_X^*$ descends to a well-defined functor
\begin{equation} \label{eqn:intersect-diag}
\Sing(X \times_S X) 
\longto
\Sing \bigt{K(X,L,0)}
\end{equation}
between dg-categories of singularities.

\begin{rem}
By a version of Orlov's theorem (\cite{brtv18,pi22b}), the dg-category 
$\Sing \bigt{K(X,L,0)}$ is equivalent to the dg-category of $L$-twisted matrix factorizations on $X$.
\end{rem}

\sssec{} \label{sssec:degree for HK}

We will then check that \eqref{eqn:intersect-diag} lands in the full subcategory $\Sing \bigt{K(X,L,0)}_Z \subseteq \Sing \bigt{K(X,L,0)}$ consisting of those $L$-twisted matrix factorizations that are set-theoretically supported on the singular locus $Z$ of $X/S$. We will then observe that there is a canonical degree map
$$
\HK_0^{\bbQ}
\Bigt{ \Sing \bigt{K(X,L,0)}_Z }
\to \bbQ.
$$

\sssec{}

Denote by
\begin{equation} \label{eqn:integration}
\int_{X/S}:
\HK_0^{\bbQ}
\bigt{ 
\Sing(X  \times_S X) 
}
\xto{\HK_0^{\bbQ} (d\delta_X^*)} 
\HK_0^{\bbQ}
\Bigt{ \Sing \bigt{K(X,L,0)}_Z }
\to 
\bbQ
  \end{equation}
the resulting map. We will prove (Theorem \ref{thm: KS loc int prod = HK_0 int with the diagonal}) that this integration map coincides with the Kato--Saito localized intersection product.

\begin{thm}
For $E \in \Coh( X\times_S X)$, let $[E]$ be its class in $\HK_0^{\bbQ}\bigt{\Sing(X \times_S X)}$.
We have
    $$
      \int_{X/S}[E]=[[\Delta_X,E]]_S.
    $$
In particular, $\int_{X/S}[\Delta_X]$ equals Bloch intersection number $\Bl(X/S)$.
\end{thm}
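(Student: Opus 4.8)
The plan is to identify the integration map \eqref{eqn:integration} with the Kato--Saito localized intersection product by showing that both compute the degree over $s$ of the stable difference of Tor-sheaves from Theorem~\ref{thm:KS-interpretation of Bl}; for the right-hand side this \emph{is} the definition of $[[\Delta_X,E]]_S$, and the final assertion about $\Bl(X/S)$ then follows from the identity $[[\Delta_X,\Delta_X]]_S = \Bl(X/S)$. I would also record at the outset that both functionals are additive in $[E]$ and that $[[\Delta_X,-]]_S$ annihilates the classes of perfect complexes on $X\times_S X$ (if $E$ is perfect then $\delta_X^*E$ is bounded, so its high Tor-sheaves vanish), so that the statement is meaningful on $\HK_0^{\bbQ}(\Sing(X\times_S X))$.

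The linchpin is the factorization of the diagonal as $\delta_X = d\delta_X \circ j$, where $j\colon X \to K(X,L,0)$ is the closed immersion of the underlying classical scheme $X = K(X,L,0)_{\mathrm{cl}}$. Writing $F := d\delta_X^*E$, which lies in $\Coh\bigt{K(X,L,0)}$ because $d\delta_X$ is quasi-smooth and hence $d\delta_X^*$ has finite Tor-amplitude, we get
$$
j^*F \;=\; j^*\,d\delta_X^*E \;\simeq\; \delta_X^*E ,
$$
so that the cohomology sheaves $\mathcal H^{-i}(j^*F)$ compute $\ul\Tor_i^{X\times_S X}(\Delta_X,E)$ for every $i$; by Theorem~\ref{thm:KS-interpretation of Bl} these are supported on $Z$ for $i \gg 0$. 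On the other hand, over the open locus where $p$ is smooth the scheme $X\times_S X$ is regular, so $E$ restricts there to a perfect complex and hence so does $F$ over $X\setminus Z$; thus the class $[F]$ lies in $\HK_0^{\bbQ}\bigt{\Sing(K(X,L,0))_Z}$, and by definition $\int_{X/S}[E] = \deg_Z([F])$, where $\deg_Z$ is the degree map of Section~\ref{sssec:degree for HK}.

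It then remains to prove the comparison $\deg_Z([F]) = \deg\!\bigt{(-1)^i[\mathcal H^{-i}(j^*F)] + (-1)^{i+1}[\mathcal H^{-i-1}(j^*F)]}$ for $i \gg 0$, where the right-hand side is the stable class in $G_0(Z)$ pushed forward to $G_0(s) \simeq \bbZ$. For this I would use that, Zariski-locally on $X$, $K(X,L,0)$ is the trivial square-zero thickening $\Spec_X\bigt{\O_X \oplus \mathcal L[1]}$ of $X$ by a shifted line bundle $\mathcal L$ (a twist of $L|_X$), so the version of Orlov's theorem in \cite{brtv18,pi22b} presents $\Sing(K(X,L,0))$ as the dg-category of $L$-twisted matrix factorizations on $X$; under this equivalence $[F]$ is represented by the $2$-periodic ``stabilized resolution'' of $j^*F$, whose two components are recovered from the sheaves $\mathcal H^{-i}(j^*F)$ with $i \gg 0$. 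One then checks that $\deg_Z$, on an object represented by a single coherent sheaf, computes the $\bbZ/2$-graded Euler characteristic of $\mathrm{R}\Gamma\bigt{K(X,L,0),-}$ with support on $Z$ — well-defined since $Z$ is proper over $s$ and $\HK_0^{\bbQ}(s) \simeq \bbQ$ — which applied to this matrix factorization returns precisely the stable alternating sum. Combining the two previous paragraphs yields $\int_{X/S}[E] = [[\Delta_X,E]]_S$, and specializing to $E = \Delta_X$ gives $\int_{X/S}[\Delta_X] = \Bl(X/S)$.

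The main obstacle will be this last comparison. It requires pinning down the derived structure of $K(X,L,0)$ together with the precise twist $\mathcal L$; controlling the functor $j^*$, which has infinite cohomological dimension, so that descent from the derived scheme $K(X,L,0)$ to its classical truncation $X$ is compatible with the stabilization of the alternating Tor-sums — in effect re-establishing the convergence statement of Theorem~\ref{thm:KS-interpretation of Bl} from the matrix-factorization side; and reconciling the abstractly defined degree map $\deg_Z$ with this concrete $2$-periodic Euler characteristic, with careful bookkeeping of the overall sign so that it matches the normalization in \eqref{defn:KS pairing}.
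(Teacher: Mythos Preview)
Your approach is essentially the paper's: factor $\delta_X = d\delta_X \circ \iota$, pull back along the quasi-smooth $d\delta_X$, and identify the resulting matrix-factorization class with the stable Tor-sheaves via Orlov's functor $\Xi$. The paper dispatches your ``main obstacle'' in exactly the way you anticipate---it resolves $\iota_*\mathcal O_X$ explicitly on $K(X,L,0)$ as $[\cdots \to \psi^*(L^\vee[1])^{\otimes 2}\to \psi^*L^\vee[1]\to \mathcal O]$, shows the totalization of $d\delta_X^*E \otimes \iota_*\mathcal O_X$ is eventually $2$-periodic and coincides with $\Xi(d\delta_X^*E)$, and reads off $\mathcal H^0,\mathcal H^1$ of the latter as $\ul\Tor_{2n}$, $\ul\Tor_{2n+1}$ for $n\gg 0$; the degree map is then the literal pushforward $G_0^{\bbQ}(Z)\to G_0^{\bbQ}(s)$ rather than a $\bbZ/2$-graded Euler characteristic of $\mathrm R\Gamma$ on the derived thickening.
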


\ssec{The categorical total dimension}

In the above step, we recovered Bloch intersection number by decategorifying our functor \eqref{eqn:intersect-diag}, and composing with a degree map.
We now decategorify \eqref{eqn:intersect-diag} in a different way and find (a number related to) the total dimension.

\sssec{}

This other decategorification procedure, constructed in \cite{brtv18}, goes under the name of \emph{$\ell$-adic realization of dg-categories}. Consider the symmetric monoidal dg-category $\shvl(S)$ of ind-constructible $\ell$-adic sheaves and let
$$
\Ql{S}(\beta):=\bigoplus_{j\in \bbZ}\Ql{S}(j)[2j],
$$
viewed as a commutative algebra object in $\shvl(S)$.
The $\ell$-adic realization constructed by \cite{brtv18} is a lax-monoidal $\infty$-functor
$$
\rl_S:\dgCat_S
\longto 
\Mod_{\Ql{S}(\beta)}
(\shvl(S))
$$
with the following properties:
\begin{itemize}
\item
it is compatible with filtered colimits;
\item
it is sends Drinfeld--Verdier localization sequences to fiber/cofiber sequences;
\item
for $u:Y\to S$ a quasi-compact quasi-separated $S$-scheme, we have
$$
\rl_S (\Perf(Y) )
\simeq 
u_* \Ql{Y} \otimes_{\Ql{S}} \Ql{S}(\beta).
$$
\end{itemize}

\sssec{}

The last item above implies that $\rl_S\bigt{\Perf(S)} \simeq \Ql{S}(\beta)$.
Consider now the dg-functor
$$
\Perf(S) 
\to
\Sing(X \times_S X)
$$
induced by pull-push along $S \xleftarrow{p} X \xto{\delta} X \times_S X$. Composing this arrow with \eqref{eqn:intersect-diag}, we obtain a dg-functor
$$
\Perf(S) 
\to
\Sing(X \times_S X)
\xto{{\eqref{eqn:intersect-diag}}}
\Sing \bigt{K(X, L,0)}_Z.
$$

\sssec{}

Applying $\rl_S$ to this composition, we find a map
\begin{equation}
\Ql{S}(\beta) = 
\rl_S 
\bigt{
\Perf(S) 
}
\longto
\rl_S 
\Bigt{ \Sing \bigt{K(X,L,0)}_Z }.
\end{equation}
Similarly to Section \ref{sssec:degree for HK}, there is a further canonical map
$$
\rl_S 
\Bigt{ \Sing \bigt{K(X,L,0)}_Z }
\to 
i_*\Ql{s}(\beta).
$$
Combining these, we obtain a $\Ql{S}(\beta)$-linear arrow
$$
\Ql{S}(\beta) = 
\rl_S 
\bigt{
\Perf(S) 
}
\to
\rl_S 
\Bigt{ \Sing \bigt{K(X,L,0)}_Z }
\to 
i_*\Ql{s}(\beta),
$$
which is determined in $\uH^0$ by a single element of $\Qell$.
We define the $\ell$-adic rational number $\dimtot^{\cat}(X/S)$ to be the negative of such element.

\sssec{}

From this, we immediately obtain the equality
$$
\int_{X/S} [\Delta_X ] = - \dimtot^{\cat}(X/S).
$$
Indeed, the two decategorifications $\HK_0^{\bbQ}$ and $\rl_S$ are related by {\TV}'s non-commutative $\ell$-adic Chern character, see \cite[\S 2.3]{tv22}; in the case at hand, this Chern character is simply the inclusion $\bbQ \subseteq \Qell$. 
In passing, notice that the left-hand side is an integer that is independent of $\ell$ and of the presentation of $X$ as a hypersurface, hence the same is true for the right-hand side.

In view of the above discussion, Theorem \ref{mainthm:hypersurface} is equivalent to the following.
\begin{conj}\label{conj: categorical Artin vs Artin}
Suppose that $X$ is presented as a hypersurface in a smooth $S$-scheme and that the inertia group acts unipotently on $\uH^*(X_{\bareta},\Qell)$. Then the categorical total dimension equals the classical one:
$$
\dimtot^{\cat}(X/S)= \dimtot(X/S).
$$
\end{conj}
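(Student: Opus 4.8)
The plan is to compute $\dimtot^{\cat}(X/S)$ directly from the $\ell$-adic realization of the singularity category and to identify the result with the classical Artin conductor of $X/S$ under the unipotence hypothesis. The key input is the computation, carried out in \cite{brtv18}, of $\rl_S$ applied to the category of $L$-twisted matrix factorizations $\Sing\bigt{K(X,L,0)}$ (equivalently $\MF(P,s)$ for the section $s$ cutting out $X$): this realization is expressed in terms of the inertia-invariants of the sheaf of vanishing cycles $\Phi_{X/S}$, up to a Tate twist and shift. So the first step is to recall this formula and to restrict attention to the full subcategory supported on $Z$, which does not change the realization since $\Phi_{X/S}$ is already supported on $Z$. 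The second step is to track the map
$$
\rl_S\bigt{\Perf(S)} \longto \rl_S\Bigt{\Sing\bigt{K(X,L,0)}_Z} \longto i_*\Ql{s}(\beta)
$$
through this identification, so that $\dimtot^{\cat}(X/S)$ becomes, by definition, the (negative of the) trace of the identity --- i.e. a Lefschetz-type number --- on $\uH^*_{\et}(X_s,\Phi_{X/S})^{\In}$ together with its contribution twisted by the monodromy action.

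\textbf{The monodromy-trace computation.} The heart of the argument is the following: for a continuous $\ell$-adic representation $V$ of the inertia group $\In$ on which $\In$ acts \emph{unipotently}, the difference between the naive Euler characteristic $\chi(V)$ and the "derived invariants/coinvariants" Euler characteristic $\chi(V^{\In}) - \chi(V_{\In})$ is controlled entirely by the nilpotent part of the monodromy logarithm $N$; and since $V^{\In} = \ker(N)$, $V_{\In} = \operatorname{coker}(N)$ have the same dimension, this correction vanishes. More precisely, under unipotence the Swan conductor $\Sw(V)$ is zero, so $\dimtot(V) = \dim(V)$, and one must show that the categorical recipe --- which a priori only sees $\In$-invariants of $\Phi_{X/S}$ and a boundary/residue map --- reproduces exactly $-\dimtot(\uH^*_{\et}(X_s,\Phi_{X/S}))$. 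The way to organize this is to use the fundamental triangle relating nearby cycles, the special fiber, and vanishing cycles, apply $(-)^{\In}$ (which is no longer exact, producing a long exact sequence involving $N$), and observe that in the unipotent case the "error terms" coming from $H^1(\In,-) = (-)_{\In}$ cancel against the $H^0 = (-)^{\In}$ terms in the alternating sum. I would phrase this as a lemma: for $V$ a bounded complex of unipotent $\In$-representations, $\chi\bigt{R\Gamma(\In,V)} = 0$, equivalently $\chi(V^{\In}) = \chi(V_{\In})$, which is immediate from $R\Gamma(\In,\Ql) \simeq \Ql \oplus \Ql[-1]$.

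\textbf{Assembling the two sides.} Once the realization $\rl_S\bigt{\MF(P,s)_Z}$ is identified, on nearby/vanishing cycles, with (the invariants of) $\Phi_{X/S}$ placed in the correct degree, the categorical total dimension unwinds to
$$
\dimtot^{\cat}(X/S) = \chi\bigt{\uH^*_{\et}(X_s,\Phi_{X/S})^{\In}} + (\text{monodromy correction}),
$$
and the lemma above shows the correction equals $\chi(\uH^*_{\et}(X_s,\Phi_{X/S})) - \chi(\uH^*_{\et}(X_s,\Phi_{X/S})^{\In})$, so the sum telescopes to $\chi\bigt{\uH^*_{\et}(X_s,\Phi_{X/S})}$. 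Since unipotence also forces $\Sw\bigt{\uH^*_{\et}(X_s,\Phi_{X/S})} = 0$, the right-hand side of \eqref{eqn:gBCC in intro} is exactly $-\chi\bigt{\uH^*_{\et}(X_s,\Phi_{X/S})} = \dimtot(X/S)$, which is what we want. The step I expect to be the main obstacle is the precise bookkeeping of Tate twists, shifts, and the direction of the boundary map in the identification of $\rl_S\bigt{\MF(P,s)_Z}$ with vanishing cohomology: getting the signs $(-1)^n$ and the degree shift right so that the categorical degree map $\rl_S(\Sing(\cdots)_Z) \to i_*\Ql{s}(\beta)$ genuinely computes the alternating-sum trace rather than something off by a twist. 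A secondary subtlety is checking that passing to $\In$-invariants commutes with $\rl_S$ in the relevant sense --- i.e. that the $\In$-action visible on $\rl_S$ of the matrix-factorization category really is the monodromy action on $\Phi_{X/S}$ --- but this should follow from the monodromy-equivariant refinement of the Orlov-type equivalence established in \cite{brtv18, pi22b}.
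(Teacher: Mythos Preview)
Your proposal has the right shape at the endpoints but a genuine gap in the middle, and it rests on a misidentification of the target category.

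First, the misidentification: $\Sing\bigt{K(X,L,0)}$ is \emph{not} equivalent to $\MF(P,s)$; by the twisted Orlov equivalence it is $\MF(X,L,0)$, the $L$-twisted matrix factorizations on $X$ with \emph{zero} potential. Its $\ell$-adic realization, as computed in Proposition~\ref{prop: MF(X,C,0)_Z decomposes as a direct sum in motives}, is $p_*\bigt{i_*i^!\Ql{X}(\beta)\oplus i_*i^!\Ql{X}(\beta)[1]}$, i.e.\ cohomology of $X$ with supports on $Z$ --- not the inertia-invariants of $\Phi_{X/S}$. The vanishing cycles enter through a \emph{different} dg-category, namely $\T=\Sing(X_s)$, whose realization is $(i_S)_*\uH^*_{\et}(X_s,\Phi_{X/S})^{\In}(\beta)[-1]$ by \cite{brtv18}. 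So your Step~1 does not give you what you claim.

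Second, and more seriously: even granting the correct description of the target, you have not explained how to compute the image of the class $[\Delta_X]$ under the integration map. Your sentence ``$\dimtot^{\cat}(X/S)$ becomes, by definition, the (negative of the) trace of the identity on $\uH^*_{\et}(X_s,\Phi_{X/S})^{\In}$'' is exactly the statement that needs proof, not a definition. The paper obtains it by (i) exhibiting $\rl_S(\T)$ as dualizable over $\rl_S(\B)\simeq (i_S)_*\Ql{}^{\In}(\beta)$ with an explicit duality datum (Proposition~\ref{prop:l-adic realization sing is dualizable over B}), and (ii) proving that the integration map $\int_{X/S}$ on $\rl_S\bigt{\Sing(X\times_SX)}$ matches the evaluation $\ev_{\rl_S(\T)}$ under the K\"unneth identification (Proposition~\ref{prop:int-vs-ev}, whose proof via Lemma~\ref{lem: left square tech-prop} is a genuine five-step computation comparing two $\rl_S(\B)$-module actions). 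Only after these two facts are established does $\int_{X/S}\chern_S([\Delta_X])$ become $\Tr_{\rl_S(\B)}\bigt{\id:\rl_S(\T)}$, and only then can the vanishing-cycles description of $\rl_S(\T)$ be invoked.

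Your monodromy-trace lemma is correct and is essentially the paper's appeal to \cite[Lemma~5.2.5]{tv22}: the cleanest formulation is that $(-)^{\In}$ is symmetric monoidal on complexes with unipotent $\In$-action, whence $\Tr_{\rl_S(\B)}\bigt{\id:\rl_S(\T)}=\Tr_{\Qell}\bigt{\id:\uH^*_{\et}(X_s,\Phi_{X/S})[-1]}$. But this is the \emph{last} step; the substantive work is the bridge between ``integration of $\Delta_X$'' and ``trace over $\rl_S(\B)$'', and your proposal does not build it.
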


\sssec{}

The last step of our proof amounts to proving this conjecture in the case of unipotent monodromy (we had not used this assumption yet).
Of course, one of the simplifications of the unipotent case is that the Swan conductor vanishes.\footnote{The proof in the non-unipotent case requires new ideas and will appear in \cite{bp24}.}

\ssec{Outline of the paper}

This paper is organized as follows:
\begin{itemize}
    \item In Section \ref{sec:preliminaries}, we introduce the necessary background and recall the non-commutative trace formula proven by {\TV}. We then construct an explicit duality datum for $\T = \Sing(X_s)$, the dg-category of singularities of the special fiber.
   
    \item In Section \ref{sec:K-intersection-thry}, we construct a categorification of the Kato--Saito localized intersection product (in particular, we obtain a categorification of Bloch intersection number).

    \item In Section \ref{sec:unip-monodromy} we use the constructions of the previous sections to prove Theorem \ref{mainthm:hypersurface}, which includes Theorem \ref{mainthm:DM-unip} as a special case.

    \item 
    Finally, in Section \ref{sec:pure-char}, we prove Theorem \ref{mainthm:pure-char}.
\end{itemize}

\sec*{Acknowledgements}

 We are deeply grateful to Bertrand To\"en and Gabriele Vezzosi for their influence through their works and several generous explanations.
 We would also like to thank Benjamin Hennion, Valerio Melani, Mauro Porta and Marco Robalo for interesting conversations. MP thanks Denis Nardin for a useful discussion on the difference between Koszul and symmetric algebras in the homotopical context. 
 
 Part of this work was done while MP was supported by the collaborative research center SFB 1085 \emph{Higher Invariants - Interactions between Arithmetic Geometry and Global Analysis} funded by the Deutsche Forschungsgemeinschaft.

 This project has received funding from the PEPS JCJC 2024.

\sec{Preliminaries}\label{sec:preliminaries}

The purpose of this section is to recollect the results of \cite{brtv18} and \cite{tv22} in order to fix some background and some notation.
In addition to this, we construct an explicit duality datum for a dg-category that will play a key role in the proofs: the dg-category $\sT := \Sing(X_s)$ of singularities of the special fiber.

\ssec{Trace formalism in non-commutative algebraic geometry}

\sssec{} 

We will use the theory of $\oo$-categories as developed in \cite{lu09,lu17}.
We always denote by $S=\Spec(A)$ a strictly henselian trait. No assumption is made on $S$: it can be of mixed or of pure characteristics.
     
\sssec{}

We denote by $\dgCat_A$ the $\oo$-category of small $A$-linear dg-categories up to Morita equivalence and by $\dgCAT_A$ the $\oo$-category of presentable $A$-linear dg-categories and continuous ($A$-linear) functors (see \cite{to07}). Both $\dgCat_A$ and $\dgCAT_A$ are symmetric monoidal under the tensor product $\otimes_A$ and the ind-completion functor
$$
  \wh{(-)} : \dgCat_A \longto \dgCAT_A
$$
is symmetric monoidal. So we regard $\dgCat_A$ as a non-full subcategory of $\dgCAT_A$ and call \emph{small} those morphisms in $\dgCAT_A$ that belong to $\dgCat_A$. 

\sssec{}

For $\sB$ a small monoidal $A$-linear dg-category, we denote by $\dgCat_{\B}$ the $\oo$-category of left $\B$-modules and $\sB$-linear functors.

A monoidal $A$-linear dg-category $\B$ determines a second monoidal $A$-linear dg-category $\B^{\rev}$, which has the same underlying dg-category as $\B$ but where the monoidal structure has been reversed: if $\star$ denotes the product in $\sB$, then 
$$
  b\star^{\rev} b'=b'\star b.
$$
Clearly, the $\oo$-category of right $\B$-modules $\dgCat^{\B}$ is equivalent to the $\oo$-category of left $\B^{\rev}$-modules.

\sssec{}

For $\B$ as above, the dg-category $\B^{\rev}\otimes_A \B$ is a monoidal $A$-linear dg-category (with the component-wise tensor structure) and it acts on $\sB$ from the left and from the right. 
We denote by $\sB^L$ (respectively, $\sB^R$)
the dg-category $\sB$ regarded as a left (respectively, right) $\B^{\rev}\otimes_A \B$-module.
Now, given a left $\B$-module $\T$ and a right $\B$-module $\T'$, the monoidal dg-category $\B^{\rev}\otimes_A \B$ naturally acts on $\T'\otimes_A\T$ and we have
$$
\T'\otimes_{\B}\T \simeq (\T\otimes_A\T') \otimes_{\B^{\rev}\otimes_A \B} \B^L.
$$

\sssec{}

It is well-known that for $\T \in \dgCat_A$, the ind-completion $\wh \T$ is dualizable with dual equal to $\wh{\T^{\op}}$. This implies that if $\T$ is a left $\B$-module, $\wh{\T^{\op}}$ is a right $\wh \B$-module with the dual action. 

\sssec{}

Denote by 
$$
\mu:\wh \B \otimes_{\wh A} \wh \T \to \wh \T
$$
$$
\mu^{\op}:
\wh{\T^{\op}} \otimes_{\wh A} \wh \B 
\to
\wh{\T^{\op}}
$$
the left action of $\wh \B$ on $\wh \T$ and the right action of $\wh B$ on $\wh{\T^{\op}}$.
We say that $\T$ is \emph{cotensored} over $\B$ if $\mu^{\op}$ is a small morphism, that is, if the right $\wh \B$-module structure on $\wh{\T^{\op}}$ comes from a right $\B$-module structure on $\T^{\op}$.

\sssec{}

Let $\mu^*: \wh \T \to \wh \B \otimes_{\wh A} \wh \T$ denote the right adjoint to $\mu$. It determines (by adjunction) a morphism $h: \wh \T^{\op} \otimes_{\wh A} \wh \T \to \wh \B$.
We say that $\T$ is \emph{proper} over $\B$ if $h$ is a small morphism (that is, $\T$ is enriched over $\B$).

\sssec{}

By \cite[Proposition 2.4.6]{tv22} if $\T \in \dgCat_{\B}$ is cotensored over $\B$, then $\wh \T$ has a right dual as a left $\wh \B$-module whose underlying dg-category is $\wh{\T^{\op}}$.

\begin{defn}[{\cite[Definition 2.4.7]{tv22}}]
We say that $\T$ is \emph{saturated over $\B$} if it is cotensored over $\B$ and if the two big morphisms above are small. In this case, we will denote them by $\ev_{\T/\B}$ and $\coev_{\T/\B}$, respectively.
We will write $\ev$ and $\coev$ if this will not cause confusion.
\end{defn}

\sssec{}

Define the \emph{Hochschild homology} of $\sB$ over $A$ to be the dg-category
$$
\HH_*(\sB/A)
:=
\B^R  \otimes_{\B^{\rev}\otimes_A \B}\B^L.
$$
Denote by 
$$
\ev^{\HH}
=
\ev^{\HH}_{\T/\B}:
\T^{\op}\otimes_{\B} \T 
\to
\HH_*(\B/A)
$$
the functor defined by the composition
$$
\T^{\op}\otimes_{\B} \T 
\simeq 
\bigt{\T\otimes_A \T^{\op}}
\usotimes_{\B^{\rev}\otimes_A \B} \sB^L 
\xto{\ev \otimes \id} 
\B^R  \otimes_{\B^{\rev}\otimes_A \B}\B^L
= 
\HH_*(\sB/A).
$$
Using this new functor, we can define non-commutative traces as follows.

\begin{defn}[{\cite[Definition 2.4.4]{tv22} and \cite[\S 4.2.1]{lu17}}] \label{defn: non-commutative trace}
 
Assume that $\T$ is saturated over $\B$ and let $f:T\to T$ be a $\B$-linear endomorphism. We define the \emph{non-commutative trace} of $f$ to be the object
$$
\Tr_{\B}(f:T)
\in 
\HH_*(\sB/A)
$$
corresponding to the composition
$$
  \Perf(A)
  \xto{\coev} 
  \T^{\op}\otimes_{\B} \T 
  \xto{\id \otimes f} 
  \T^{\op}\otimes_{\B} \T 
  \xto{\ev^{\HH}} 
  \HH_*(\B/A).
$$

\end{defn}

\ssec{K\"unneth formula for dg-categories of singularities}

Here we review an equivalence, due to \TV, see \cite[Theorem 4.2.1]{tv22}. It will play a crucial role in our computations.

\sssec{}\label{sssec: def sing cat}

For a (bounded, noetherian derived) scheme $W$ over $S$, we will consider:
              \begin{itemize}
                  \item $\Coh(W)$, the dg-category of complexes of $\CO_W$-modules with bounded and coherent total cohomology;
                  \item $\Perf(W)$, the dg-category of perfect complexes on $W$;
                  \item $\QCoh(W)$, the dg-category of quasi-coherent complexes on $W$;
              \end{itemize}
Moreover, for such a $W$, we will consider
$$
         \Sing(W):=\Coh(W)/\Perf(W),
$$
the dg-category of singularities of $W$. This dg-category vanishes if and only if $W$ is regular.

\sssec{}

Let $p:X \to S$ be as in the generalized Bloch conductor formula and consider the special fiber $p_s: X_s \to s$. 
The dg-category $\T:=\Sing(X_s) \in \dgCat_A$ is dualizable, and  our goal is to make the duality datum explicit.

\sssec{}

Consider the derived groupoid $G:=s \times_S s$, which acts naturally on $X_s$. We denote the action map by $\mu: G \times_s X_s \to X_s$.

We also set $\B^+ := \Coh(G)$: this is an algebra object of $\dgCat_A$ under convolution and it acts naturally on $\Coh(X_s)$.

\sssec{}

Set also $\B := \Sing(G) := \Coh(G)/\Perf(G)$ and $\T:= \Coh(X_s)/\Perf(X_s)$. The above $\B^+$-action on $\Coh(X_s)$ descends to an action of $\B$ on $\T$ (see \cite[Proposition 4.1.5 and \S 4.1.3]{tv22}).

By \cite[Proposition 4.1.7]{tv22}, we know that $\T$ is cotensored over $\B$. This means that the dual action morphism is a small morphism: for any $\phi \in \T^{\op}$ and $b \in \B$, the functor $\Hom_{\wh \T}(\phi, b \cdot -)$ preserves colimits.

\sssec{}

Since $\T$ is cotensored, we can form the tensor product 
$$
\T^{\op} \usotimes_{\B} \T 
\in \dgCat_A.
$$
There is an equivalence $\T^{\op} \usotimes_{\B} \T \to \Sing(X \times_S X)$ of $(A,A)$-bimodule dg-categories.
This equivalence comes from an equivalence
$$
\Coh(X_s)^{\op} \usotimes_{\B^+} \Coh(X_s)
\xto{\simeq}
\Coh(X \times_S X)_{X_s \times_s X_s},
$$
which in turn is induced by the functor
$$
\wt\ffF:
\Coh(X_s)^{\op} \otimes_{A} \Coh(X_s)
\longto
\Coh(X \times_S X)_{X_s \times_s X_s}
$$
$$
(E, F) 
\squigto
j_*( \bbD E \boxtimes_s F),
$$
where $j: {X_s \times_s X_s} \hto X \times_S X$ is the obvious closed embedding, $\bbD E := \ul\Hom(E, \O_{X_s})$ and $-\boxtimes_s -$ denotes the external tensor product relative to $s$.

Let us recall the following result proven by {\TV}.

\begin{thm}[{\cite[Theorem 4.2.1]{tv22}}] \label{thm:TV}
The above functor induces an equivalence
$$
\ffF:
\T^{\op} \usotimes_{\B} \T
\xto{ \;\;\simeq \;\;}
\Sing(X \times_S X).
$$
\end{thm}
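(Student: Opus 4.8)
The plan is to deduce Theorem~\ref{thm:TV} from the already-stated equivalence
$$
\wt\ffF\colon \Coh(X_s)^{\op} \usotimes_{\B^+} \Coh(X_s) \xto{\simeq} \Coh(X\times_S X)_{X_s\times_s X_s}
$$
by passing to quotients by perfect objects, keeping track of how the singularity categories are extracted on each side. First I would set up the two relevant Drinfeld--Verdier localization sequences. On the target side, there is the sequence
$$
\Perf(X\times_S X)_{X_s\times_s X_s} \to \Coh(X\times_S X)_{X_s\times_s X_s} \to \Sing(X\times_S X),
$$
using that $X\times_S X$ has singular locus contained in the special fiber (because $p$ is generically smooth and $X$ is regular, so $X\times_S X$ is regular away from $X_s\times_s X_s$), hence $\Sing(X\times_S X)=\Sing(X\times_S X)_{X_s\times_s X_s}=\Coh_{X_s\times_s X_s}/\Perf_{X_s\times_s X_s}$. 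On the source side, I would first recall that $\T^{\op}\usotimes_\B \T$ is computed as a quotient of $\Coh(X_s)^{\op}\usotimes_{\B^+}\Coh(X_s)$: since $\B=\B^+/\Perf(G)$ and $\T=\Coh(X_s)/\Perf(X_s)$, the tensor product over $\B$ is obtained from the tensor product over $\B^+$ by killing the image of the perfect objects on both factors and the perfect objects of $G$ acting through $\mu$. The key bookkeeping lemma is therefore: the kernel of $\Coh(X_s)^{\op}\usotimes_{\B^+}\Coh(X_s)\to \T^{\op}\usotimes_\B \T$ is generated, as a localizing ideal, by the image under $\wt\ffF$ of objects of the form $\bbD E\boxtimes_s F$ with either $E$ or $F$ perfect on $X_s$ (together with the $\B^+$-action relations).

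Next I would identify this kernel with $\Perf(X\times_S X)_{X_s\times_s X_s}$ under $\wt\ffF$. One inclusion is easy: if $F\in\Perf(X_s)$ then $\bbD E\boxtimes_s F$ is perfect on $X_s\times_s X_s$, and since $j$ is a regular closed immersion (being the base change of $s\hookrightarrow S$, a Cartier divisor, to the regular scheme $X\times_S X$) the pushforward $j_*$ of a perfect complex supported set-theoretically on $X_s\times_s X_s$ lands in $\Perf(X\times_S X)$; symmetrically for $E$ perfect. For the reverse inclusion I would use that $\Perf(X\times_S X)_{X_s\times_s X_s}$ is compactly generated (indeed generated as a localizing subcategory) by objects of the form $j_*(\text{something perfect})$, or more precisely by $\mathcal O_{X\times_S X}$ twisted down to the special fiber, and that every such generator lies in the image of the kernel — this is essentially the statement that a perfect complex on the ambient regular scheme, supported on the special fiber, is built from pushforwards of perfect complexes on $X_s\times_s X_s$, which is a standard d\'evissage along the Cartier divisor. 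Combining the two localization sequences with the five-lemma for Verdier quotients (the outer two terms match, hence so does the quotient), I obtain the desired equivalence $\ffF\colon \T^{\op}\usotimes_\B\T\xto{\simeq}\Sing(X\times_S X)$, and its $(A,A)$-bilinearity is inherited from that of $\wt\ffF$.

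I would then check compatibility with the module structures so that $\ffF$ is genuinely a map of bimodule dg-categories, not just an equivalence of underlying categories: the left $\T^{\op}$- and right $\T$-actions (equivalently the $(A,A)$-bimodule structure, or the residual $\B$-action) are induced from the corresponding ones before localization, and $\wt\ffF$ was already constructed compatibly with those, so this is formal once the kernel identification is in place.

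The main obstacle I anticipate is the kernel computation --- precisely, showing that under $\wt\ffF$ the localizing subcategory generated by $\{\bbD E\boxtimes_s F : E\text{ or }F\text{ perfect}\}$ corresponds \emph{exactly} to $\Perf(X\times_S X)_{X_s\times_s X_s}$, with no objects left over and nothing missing. The ``$\subseteq$'' direction needs the regularity of $X\times_S X$ together with $j$ being a regular immersion so that $j_*$ of perfect is perfect; the ``$\supseteq$'' direction needs a d\'evissage showing every perfect complex on $X\times_S X$ supported on the special fiber is generated by pushforwards along $j$ of perfect complexes --- this is where I expect to invoke the structure of $X\times_S X$ near the Cartier divisor $X_s\times_s X_s$ and the behaviour of $\Perf$ under the localization sequence $\Perf_{X_s\times_s X_s}\to\Perf(X\times_S X)\to\Perf(X\times_S X\setminus X_s\times_s X_s)$, noting that the last term vanishes in $\Sing$ because $X\times_S X$ is regular there. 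Everything else (the formal properties of $\usotimes_\B$ as a quotient of $\usotimes_{\B^+}$, the five-lemma for Verdier quotients, bilinearity) should be routine given the results already cited from \cite{tv22}.
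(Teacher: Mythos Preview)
The paper does not prove this theorem: it is stated as a citation of \cite[Theorem 4.2.1]{tv22}, with no proof supplied. The preceding equivalence at the level of $\Coh$ (the one you take as your starting point) is likewise only quoted from \cite{tv22}, not proven here. So there is no ``paper's own proof'' to compare your proposal against.

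That said, a brief comment on your argument as a self-contained reduction. Your plan --- pass from the $\Coh$-level equivalence to the $\Sing$-level one by matching the kernels of the two Verdier projections --- is the natural approach and is essentially how such statements are handled in \cite{tv22}. One correction: you write that $X\times_S X$ is regular, but it is not (indeed $\Sing(X\times_S X)\neq 0$ is the whole point). Fortunately your actual claim survives: $j:X_s\times_s X_s\hookrightarrow X\times_S X$ is the inclusion of the special fiber of the \emph{flat} map $X\times_S X\to S$, hence an effective Cartier divisor and thus a regular immersion; regularity of the ambient scheme is not needed for $j_*$ to preserve perfect complexes. Your d\'evissage for the reverse inclusion is also the right idea. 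Note, however, that since the $\Coh$-level equivalence is itself an uncited black box from \cite{tv22}, your argument does not make the theorem any more self-contained than the paper already is --- you are reducing one quoted statement to another.
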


\begin{cor}[{\cite[Proposition 4.3.1]{tv22}}]
  Let $\T$ and $\B$ be as above. Then $\T$ is saturated over $\B$.
\end{cor}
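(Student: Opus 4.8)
The plan is to verify the two conditions in the definition of saturation (Definition 2.4.7 of \cite{tv22}, recalled above). Cotensoredness of $\T$ over $\B$ is already available, via \cite[Proposition 4.1.7]{tv22}, so \cite[Proposition 2.4.6]{tv22} equips $\wh\T$ with a duality datum exhibiting $\wh{\T^{\op}}$ as its right dual in left $\wh\B$-modules, and it remains to check that the evaluation $\ev\colon \wh{\T^{\op}}\otimes_{\wh A}\wh\T\to\wh\B$ and the coevaluation $\coev\colon \wh{\Perf(A)}\to\wh{\T^{\op}\otimes_{\B}\T}$ are small, i.e. are obtained by ind-completion from functors between the ambient small dg-categories.

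First I would dispose of the coevaluation. Since $\wh{\Perf(A)}$ is compactly generated by the unit $\O_A$, smallness of $\coev$ amounts to $\coev(\O_A)$ being compact in $\wh{\T^{\op}\otimes_{\B}\T}$. Using Theorem \ref{thm:TV} to identify the latter with $\Ind\bigt{\Sing(X\times_S X)}$, the task is to recognise $\coev(\O_A)$ as the class of the diagonal $\delta_{X,*}\O_X=\Delta_X$ --- unwinding the functor $\ffF$ of Theorem \ref{thm:TV}, the triangle identities for $\Delta_X$ reduce to Grothendieck--Serre duality for the closed immersion $\delta_X$. Since $\delta_X$ is a closed immersion, $\Delta_X$ lies in $\Coh(X\times_S X)$ and so defines an honest object of the small category $\Sing(X\times_S X)$; hence $\coev$ is small.

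Next I would treat the evaluation, where the geometric input enters. Now $\wh{\T^{\op}}\otimes_{\wh A}\wh\T=\Ind\bigt{\T^{\op}\otimes_A\T}$ is compactly generated by the objects $\phi\boxtimes E$ with $\phi\in\T^{\op}$ and $E\in\T$, so it suffices to check that $\ev(\phi\boxtimes E)$ lands in $\Sing(G)\subseteq\wh\B$. I would unravel the construction of the $\B$-enrichment to present $\ev(\phi\boxtimes E)$ as $\on{pr}_{G,*}$ of an internal $\Hom$ on $G\times_s X_s$ between suitable pullbacks of $\phi$ and $E$ along the maps $\on{pr}_{X_s},\mu\colon G\times_s X_s\to X_s$, where $\on{pr}_G\colon G\times_s X_s\to G$ is the projection. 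The internal $\Hom$ of coherent sheaves is coherent and bounded below; moreover, since every object of $\T=\Sing(X_s)$ is represented by a coherent sheaf supported on the singular locus $Z$ of $X/S$ and the $G$-action preserves $Z$, this internal $\Hom$ is set-theoretically supported on $G\times_s Z$. By hypothesis $Z$ is proper over $S$, hence over $s$, so $\on{pr}_G$ is proper on $G\times_s Z$ and the pushforward preserves coherence; the residual boundedness discrepancy is absorbed upon passing to the Verdier quotient $\Sing(G)=\Coh(G)/\Perf(G)$. Thus $\ev(\phi\boxtimes E)\in\Sing(G)$ and $\ev$ is small.

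Putting the two together, $\T$ is cotensored over $\B$ with small evaluation and coevaluation, hence saturated over $\B$; we then write $\ev_{\T/\B}$ and $\coev_{\T/\B}$ for these. I expect the evaluation step to be the main obstacle: one must follow the $\B$-enrichment carefully enough through the equivalences of this section to display all the sheaves at issue as supported over the proper $S$-scheme $Z$, which is precisely where the properness hypothesis on the singular locus is consumed. The coevaluation step is comparatively soft, reducing to duality for the closed immersion $\delta_X$.
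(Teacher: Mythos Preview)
Your proposal is essentially correct in spirit and aligns with what the paper does in Section~\ref{ssec: explicit duality datum}, immediately after this corollary. Note, however, that the paper does not prove the corollary in-text: it is imported from \cite[Proposition~4.3.1]{tv22}, with the subsequent remark indicating that the K\"unneth equivalence of Theorem~\ref{thm:TV} is the essential input. The paper then goes on in Section~\ref{ssec: explicit duality datum} to construct the explicit duality datum, and your proposal is closer to that later construction than to the cited argument.

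Two technical points are worth flagging. First, for the coevaluation you invoke ``Grothendieck--Serre duality for $\delta_X$'' to verify the triangle identities; the paper (Lemma~\ref{lem: wt phi comp diag = id} and Proposition~\ref{prop: comm diag for ev}) does this instead by base change along the square $X_s \to X_s \times_s X_s$, $X \to X\times_S X$ together with the projection formula---no Grothendieck duality for $\delta_X$ is needed. Second, your ``residual boundedness discrepancy'' for the evaluation is a non-issue: since $X$ is regular and $X_s \hookrightarrow X$ is a Cartier divisor, $X_s$ is Gorenstein, so $\bbD F = \ul\Hom(F,\O_{X_s}) \in \Coh(X_s)$ for $F \in \Coh(X_s)$, and the output of $\wt\ev$ genuinely lands in $\Coh(G)$ (compare the sentence preceding Remark~\ref{rem:ev} and the proof of Lemma~\ref{lem:ev}). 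Your use of the properness of $Z$ is exactly the mechanism the paper uses there. Finally, note a small logical reordering: your coevaluation step appeals to the triangle identities, which presuppose the explicit evaluation, so the evaluation paragraph should come first (or, as the paper does, one simply posits both small functors and verifies the identities directly, invoking uniqueness of duals implicitly).
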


\begin{rem}
We believe that, though the theorem above is enough to conclude that $\T$ is saturated over $\B$ (i.e. that there exists a duality datum) and this is all is needed for the proofs in \cite{tv22}, it takes a bit of work to construct an \emph{explicit} duality datum by means of it.
\end{rem}

\ssec{An explicit duality datum for \texorpdfstring{$\T/\B$}{T/B}}\label{ssec: explicit duality datum}

We use the above equivalence to exhibit an explicit duality datum for $\T$ as a left $\B$-module. 

\sssec{}

Unsurprisingly, the candidate dual is the right $\B$-module $\T^\op$.
Then the evaluation must be a functor
$$
\ev:
\T \otimes_A \T^{\op} \to \B
$$
of $(\B,\B)$-bimodules, while the coevaluation must be a functor
$$
\coev:
\Perf(S) \to \T^\op \otimes_\B \T
$$
in $\dgCat_A$.
After exhibiting these functors, we will need to show that the compositions
\begin{equation} \label{eqn:zorro-1}
\T \simeq \T \otimes_A \Perf(A) 
\xto{\id \otimes \coev}
\T \otimes_A  \T^\op \otimes_\B \T
\xto{\ev \otimes \id}
\B \otimes_\B \T
\simeq
\T
\end{equation}
\begin{equation} \label{eqn:zorro-2}
\T^\op 
\simeq
\Perf(A) \otimes_A \T^{\op}
\xto{\coev \otimes \id}
 \T^\op \otimes_\B \T \otimes_A \T^{\op}
\xto{\id \otimes \ev}
\T^\op \otimes_\B  \B
\simeq
\T^{\op}
\end{equation}
are homotopic to the identity functors.

\sssec{}

To define the coevaluation, we use the equivalence of Theorem \ref{thm:TV}. Thus, $\coev$ is the functor
$$
\ell: 
\Perf(S) \to \Coh(X \times_S X ) \xto{proj} \Sing(X \times_S X)
$$
$$
M \squigto \delta_*(p^*(M)) \squigto proj \Bigt{\delta_*\bigt{p^*(M)}},
$$
where $\delta: X \to X \times_S X$ is the diagonal. We set $\Delta_X := \delta_*(\O_X) \in \Coh(X \times_S X)$, alerting the reader that we will often abuse notation and regard $\Delta_X$ as an object of $\Sing(X \times_S X)$ via the projection functor.

\sssec{} \label{sssec:notations}

Let us now construct the evaluation.
Consider the tautological maps
$$
X_s \times_S X_s
\xleftarrow{\;\; q \;\;}
X_s \times_X X_s
\xto{\;\; r \;\;}
G := s \times_S s.
$$ 
We denote by $q_1,q_2$ the compositions $X_s \times_X X_s \to X_s \times_S X_s \rr X_s$ of $q$ with the two projections.
Observe that we have an isomorphism
$$ 
G \times_s X_s
\simeq
X_s \times_X X_s
$$ 
$$
(g,x) \mapsto (g \cdot x, x).
$$
Under this isomorphism, the maps $q_1, q_2$ correspond to $\mu, pr: G \times_s X_s \rr X_s$ respectively, while $r$ corresponds to the projection $\pr_G: G \times_s X_s \to G$ onto $G$.

\sssec{} 

Now consider the functor
$$
\wt \ev:
\Coh(X_s) \otimes_A \Coh(X_s)^\op \to \B^+
$$
$$
(E, F) \squigto 
r_* q^* 
\bigt{ E \boxtimes_S \bbD F }.
$$
Here $-\boxtimes_S -$ denotes the external tensor product relative to $S$, i.e. $E\boxtimes_S E'= q_1^*E\otimes q_2^* E'$.
This functor does indeed land in $\B^+ = \Coh(G)$, since $X$ is regular and $r$ proper.

\begin{rem} \label{rem:ev}
In view of the above observations, an alternative way to write $\wt \ev$ is as
$$
(E, F)
\squigto
(\pr_G)_* (\mu^* E \otimes \pr^* (\bbD F)),
$$
where $\pr_G: G \times_s X_s \to G$ is the projection.
\end{rem}

\begin{lem} \label{lem:ev}
The above functor $\wt \ev$ descends to a functor
$$
\Sing(X_s) \otimes_A \Sing(X_s)^\op \to \B
$$
that we call $\ev$.
\end{lem}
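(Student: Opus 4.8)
The plan is to show that $\wt\ev$ kills $\Perf$ in each variable, so that it descends to the Verdier quotient $\Sing(X_s)\otimes_A\Sing(X_s)^\op = \bigt{\Coh(X_s)/\Perf(X_s)}\otimes_A\bigt{\Coh(X_s)/\Perf(X_s)}^\op$. Concretely, $\wt\ev$ lands in $\B^+=\Coh(G)$, and we must check that whenever $E\in\Perf(X_s)$ (resp.\ $F\in\Perf(X_s)$), the object $\wt\ev(E,F)$ lies in $\Perf(G)$; then the universal property of Verdier localization produces the dashed functor to $\B=\Coh(G)/\Perf(G)$. By symmetry of the two variables (one is covariant, the other contravariant, but duality $\bbD$ exchanges $\Perf$ with $\Perf$ on the regular-in-the-fiber side — more precisely $\bbD F$ is perfect iff $F$ is, since $X_s$ is what it is, but note the key regularity is that of $X$), it suffices to treat one variable, say $E$.

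First I would use the description from Remark \ref{rem:ev}: $\wt\ev(E,F)\simeq (\pr_G)_*\bigt{\mu^*E\otimes\pr^*(\bbD F)}$ on $G\times_s X_s\simeq X_s\times_X X_s$. The crucial point is that $X$ is regular: the closed embedding $X_s\hookrightarrow X$ is quasi-smooth (a Cartier divisor, locally cut out by a single equation coming from the uniformizer of $S$ — or by the section cutting out the special fiber), so $X_s\times_X X_s$ is a quasi-smooth derived scheme, and the projections $q_1,q_2$ (equivalently $\mu,\pr$) are quasi-smooth morphisms of bounded noetherian derived schemes. Therefore $\mu^*$ and $\pr^*$ preserve perfection, $q^*=q_1^*(-)\otimes q_2^*(-)$ preserves perfection, and the product of two perfect complexes is perfect. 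So if $E\in\Perf(X_s)$ then $\mu^*E$ is perfect; tensoring with the coherent complex $\pr^*(\bbD F)$ gives a coherent complex on the quasi-smooth derived scheme $X_s\times_X X_s$; and then I must push forward along $\pr_G=r$. Since $r$ is proper (as observed in \S\ref{sssec:notations}, because $X/S$ — hence $X_s\times_X X_s\to G$ — is proper on the relevant locus, or more directly $r$ is proper because it is obtained by base change from the proper diagonal-type map) and $G=s\times_S s$ is a bounded noetherian derived scheme, $r_*$ preserves coherence; but to land in $\Perf(G)$ I need more. The cleanest route: when $E$ is perfect, $\mu^*E\otimes\pr^*(\bbD F) \simeq \pr^*\bigt{E\otimes\bbD F}$ is pulled back along $\pr:G\times_s X_s\to X_s$... wait, that is false since $\mu\ne\pr$. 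Instead I use the projection formula for $\pr_G$ together with the fact that, $E$ being perfect, $\mu^*E$ is $\pr_G$-relatively perfect, and combine with properness: the composite $X_s\to s$ being proper and $\mathcal O_{X_s}$-modules pushing to perfect $A$-modules on the fibers — this is where I invoke that $X_s$ is \emph{proper} over $s$ and that perfect-over-the-base complexes push to perfect complexes (Thomason–Trobaugh / the fact that $\Perf$ is preserved by proper pushforward from schemes proper and of finite Tor-dimension over the base). Concretely: $\mu^*E$ is perfect on $X_s\times_X X_s$; $X_s\times_X X_s\to G$ is proper and of finite Tor-dimension (being quasi-smooth), so $r_*$ sends perfect complexes to perfect complexes, whence $r_*\bigt{\mu^*E\otimes\pr^*\bbD F}$ — hmm, but $\pr^*\bbD F$ need not be perfect.

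The actual argument I would write: decompose via $X_s\times_X X_s\xto{r} G$ which is proper of finite Tor-dimension, so $r_*$ preserves both $\Coh$ and $\Perf$. The object $\mu^*E\otimes\pr^*\bbD F$: if $E\in\Perf(X_s)$, then since $X_s\times_X X_s\to X_s$ via $\mu$ is quasi-smooth (hence of finite Tor-dimension), $\mu^*E$ is perfect, hence $\mu^*E\otimes(-)$ is an exact endofunctor of $\Coh(X_s\times_X X_s)$ sending $\Coh$ to $\Coh$; but I want it to send the particular object $\pr^*\bbD F$ into the subcategory that $r_*$ maps to $\Perf(G)$. The honest claim is: $r_*\bigt{\mathcal P\otimes\mathcal C\bigt}\in\Perf(G)$ whenever $\mathcal P\in\Perf$ and $\mathcal C$ is $r$-relatively perfect, i.e.\ $\mathcal C\in\Coh$ with $r_*(\mathcal C\otimes -)$ of finite cohomological amplitude — and for $\mathcal C=\pr^*\bbD F$ with $\pr$ proper and flat-ish this holds. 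So the genuine input is: \emph{$X_s$ is proper over $s$} and \emph{$X$ is regular} (giving quasi-smoothness of $X_s\hookrightarrow X$, hence of $\mu,\pr,r$ and finite Tor-dimension of $r$). I would phrase it as: $r$ is proper and of finite Tor-dimension, hence $r_*$ carries $\Perf(X_s\times_X X_s)$ to $\Perf(G)$; $\mu$ is quasi-smooth, so $\mu^*E\in\Perf(X_s\times_X X_s)$ when $E\in\Perf(X_s)$; and $-\otimes\pr^*\bbD F$ preserves $\Perf$ since $\pr^*\bbD F\in\Perf(X_s\times_X X_s)$ — which in turn holds because $\pr$ is quasi-smooth and $\bbD F\in\Coh(X_s)$ with $\bbD F$ perfect exactly when $F$ is, but even for $F\in\Coh$, $\pr^*\bbD F$ is coherent, not perfect. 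So the cleanest correct statement uses: $\wt\ev(E,F)=r_*q^*(E\boxtimes_S\bbD F)$ and $q=(q_1,q_2)$ is quasi-smooth into $X_s\times_S X_s$, $r$ proper of finite Tor-dim; then $E\in\Perf(X_s)\Rightarrow E\boxtimes_S\bbD F\in\Perf\otimes\Coh$, $q^*$ preserves this, and $r_*$ of (perfect $\otimes$ coherent) where the perfect factor trivializes relatively — I realize the slick statement is that $r_*q^*(E\boxtimes_S\bbD F)$ with $E$ perfect is, by base change and projection formula, computed from $p_*$ applied to something perfect, landing in $\Perf(s)\subset\Perf(G)$. I would spell this out via the Cartesian square relating $X_s\times_X X_s\to G$ to $X_s\to s$, project-formula $E$ out, use $p_{s,*}$ of a perfect complex on proper $X_s/s$ is perfect.

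The main obstacle I expect is precisely this last point: verifying that $\wt\ev(E,F)\in\Perf(G)$ rather than merely $\Coh(G)$ when one argument is perfect. Unwinding it requires the base-change/projection-formula manipulation identifying $r_*q^*(E\boxtimes_S\bbD F)$, for $E$ perfect, with a pushforward of a perfect complex along the proper map $X_s\to s$ (using regularity of $X$ for the derived base change $X_s\times_X X_s = G\times_s X_s$ and quasi-smoothness, and properness of $X_s/s$ so that $p_{s,*}\Perf(X_s)\subseteq\Perf(s)$). The contravariant variable is then immediate: $F\in\Perf(X_s)\Rightarrow\bbD F\in\Perf(X_s)$ since $X_s$-duality of a perfect complex is perfect, so the same computation applies. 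Once both vanishing statements are in hand, the universal property of the Verdier quotient (applied in each variable, using that $-\otimes_A-$ of localizations is the localization of the tensor) yields the factorization through $\Sing(X_s)\otimes_A\Sing(X_s)^\op\to\B$, completing the proof. \qed
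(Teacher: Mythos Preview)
Your plan has the right shape but contains two genuine gaps.

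First, and most seriously, you repeatedly invoke properness of $X_s/s$ (and hence of $r$). In the setting of this section, $p:X\to S$ is as in the \emph{generalized} Bloch conductor formula: only the singular locus $Z$ is assumed proper over $S$, not $X$ itself. So $r = \pr_G : G\times_s X_s \to G$ need not be proper, and $p_{s,*}$ need not send anything into $\Perf(s)$. The paper handles this by reducing to objects supported on $Z$ (the ``without loss of generality'' step): since $U_s = X_s\setminus Z$ is regular, coherent sheaves on $X_s$ may, for the purpose of landing in $\Sing$, be taken supported on $Z$, and for those the relevant pushforward is along the proper map $Z\to s$.

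Second, even granting properness, your argument does not close. You correctly note that for $E\in\Perf(X_s)$ the pullback $\mu^*E$ is perfect on $X_s\times_X X_s$, but $\mu^*E\otimes\pr^*\bbD F$ is then only coherent, and $r_*$ of a coherent complex lands in $\Coh(G)$, not $\Perf(G)$ --- recall $G = s\times_S s$ is not regular, so $\Coh(G)\neq\Perf(G)$. Your proposed fix (``identify $r_*q^*(E\boxtimes_S\bbD F)$ with a pushforward of a perfect complex along $X_s\to s$'') is not carried out and is not quite right as stated. The paper's maneuver is different: reduce via Karoubi-generation to $F = i^*P$ with $P\in\Perf(X)$ (using that $i:X_s\hookrightarrow X$ is affine, so $i^*\Perf(X)$ Karoubi-generates $\Perf(X_s)$), then use the identity $i\circ q_1 = i\circ q_2$ to rewrite $q_1^*E\otimes q_2^*i^*(P^\vee) \simeq q_1^*(E\otimes i^*P^\vee)$. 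After a swap and base-change along the square $G\times_s X_s \to G$, $X_s\to s$, the result is identified with $\CO_G\otimes_k H^*(X_s, E'')$ --- a \emph{free} $\CO_G$-module, hence perfect once $H^*(X_s,E'')$ is finite-dimensional. The support-on-$Z$ reduction then supplies exactly this finiteness.
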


\begin{proof}
We need to show that $\wt\ev(E,F) \in \Perf(G)$, as soon as at least one between $E$ and $F$ is perfect. 
Suppose that $F$ is perfect (the other case is completely analogous). Since $i: X_s \to X$ is affine, the functor $i_*: \QCoh(X_s) \to \QCoh(X)$ is conservative and thus $\Perf(X_s)$ is Karoubi-generated by the essential image of $i^*: \Perf(X) \to \Perf(X_s)$. 
In particular, we may assume that $F = i^* P$ for some $P \in \Perf(X)$. 
Thus, we need to prove that the object 
$$
M:=r_* q^* \bigt{ E \boxtimes_S \bbD F }
\simeq
r_* q^* 
\bigt{ E \boxtimes_S i^*(P^\vee)}
=
r_* q^* 
\bigt{ \pr_1^* E \otimes \pr_2^* i^*(P^\vee)}
$$
is perfect. Denoting by $q_1,q_2: X_s \times_X X_s \rr X_s$ the two projections, we obtain that
$$
M 
\simeq
r_*
\bigt{ q_1^* E \otimes q_2^* i^*(P^\vee)}
\simeq
r_*
\bigt{ q_1^* (E \otimes i^*(P^\vee)) },
$$
where the last step used $q_2 \circ i = q_1 \circ i$.
Now, $E':= E \otimes i^*(P^\vee)$ belongs to $\Coh(X_s)$, so it suffices to prove that $r_* \circ q_1^* (E')$ is perfect for any $E' \in \Coh(X_s)$. 
To this end, consider the \virg{swap} autoequivalence $\sigma:X_s\times_X X_s \simeq X_s\times_X X_s$. Let $E''= \sigma^* E'$. Then $q_1^*E'\simeq q_2^* E''$ and the isomorphism $X_s \times_X X_s \simeq X_s \times_s G$, together with base-change, implies that
$$
r_* \circ q_1^* (E')
\simeq
\CO_G \otimes_k H^*(X_s, E''),
$$
with $H^*(X_s, E'') = (p_{X_s})_*(E'')$. 
Since we may assume without loss of generality that $E''$ is supported on the singular locus of $X\to S$, which is proper over $s$, $H^*(X_s, E'')$ is a finite dimensional $k$-vector space and the assertion follows.
\end{proof}

\sssec{}

We now define the functor
$$
\wt\phi: 
\Coh(X_s) 
\otimes_A 
\Coh(X \times_S X)_{X_s \times_s X_s}
\longto
\QCoh( X_s)
$$
$$
(E, H) 
\squigto
(\pr_1)_*
\bigt{
\pr_2^* E \otimes j^* H
}.
$$
Our main computation is the following:

\begin{prop}\label{prop: comm diag for ev}
The diagram 
\begin{equation}  \label{diag:main}
\begin{tikzpicture}[scale=1.5]
\node (00) at (0,0) {$ \Coh(X_s) \otimes_A \Coh(X \times_S X)_{X_s \times_s X_s}$};
\node (10) at (4,0) {$\QCoh(X_s)$};
\node (01) at (0,1) {$ \Coh(X_s) \otimes_A \Coh(X_s)^\op \otimes_{\B^+} \Coh(X_s)$};
\node (11) at (4,1) {$\B^+ \otimes_{\B^+} \Coh(X_s)$};
\path[->,font=\scriptsize,>=angle 90]
(00.east) edge node[above] {$\wt \phi$}  (10.west); 
\path[->,font=\scriptsize,>=angle 90]
(01.east) edge node[above] {$\wt\ev \otimes \id $} (11.west); 
\path[->,font=\scriptsize,>=angle 90]
(01.south) edge node[right] {$ \id \otimes \wt \ffF$} (00.north);
\path[->,font=\scriptsize,>=angle 90]
(11.south) edge node[right] {$\star$} (10.north);
\end{tikzpicture}
\end{equation}
is naturally commutative. 
Here $\star: \B^+\otimes_{\B^+}\Coh(X_s)\to \QCoh(X_s)$ denotes the dg-functor induced by the action of $\B^+$ on $\Coh(X_s)$.
\end{prop}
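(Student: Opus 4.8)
The plan is to prove commutativity of \eqref{diag:main} by a diagram chase: I would unwind both composite functors $(0,1)\to(1,0)$ into explicit push–pull expressions built from pullbacks, tensor products and pushforwards along maps of (derived) schemes, and then match the two using base change and the projection formula. All four functors are of this geometric type, so the isomorphism one constructs is automatically compatible with the relative tensor product over $\B^+$ (equivalently, it suffices to work on pure tensors $E\otimes F\otimes K$). On such a tensor the lower-left composite $\wt\phi\circ(\id\otimes\wt\ffF)$ equals $(\pr_1)_*\bigl(\pr_2^*E\otimes j^*j_*(\bbD F\boxtimes_s K)\bigr)$. The closed immersion $j : X_s\times_s X_s\hto X\times_S X$ is locally the vanishing locus of a uniformizer of $S$ pulled back to $X\times_S X$, a non-zero-divisor because $X\times_S X$ is flat over $S$; hence $j$ is derived-regular, and base change along its self-intersection square rewrites $j^*j_*(-)$ as $\pi_{1*}\pi_2^*(-)$, where $W:=(X_s\times_s X_s)\times_{X\times_S X}(X_s\times_s X_s)$ with projections $\pi_1,\pi_2$. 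Applying the projection formula along $\pi_1$ and then pushing along $\pr_1$ turns the lower-left composite into a push–pull $g_*\bigl(g_E^*E\otimes g_F^*\bbD F\otimes g_K^*K\bigr)$ along explicit maps $g,g_E,g_F,g_K : W\to X_s$, each a composite of $\pi_1$ or $\pi_2$ with $\pr_1$ or $\pr_2$.

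Next I would unwind the upper-right composite $\star\circ(\wt\ev\otimes\id)$, which on a pure tensor is $\wt\ev(E,F)\star K$. Writing $\wt\ev(E,F)=(\pr_G)_*(\mu^*E\otimes\pr^*\bbD F)$ as in Remark \ref{rem:ev}, and $\star$ as the convolution $\mathcal G\star K=\mu_*(\pr_G^*\mathcal G\otimes\pr^*K)$ attached to the $G$-action on $X_s$, one invokes the identification $G\times_s X_s\simeq X_s\times_X X_s$ (under which $\mu,\pr,\pr_G$ become $q_1,q_2,r$), base change along the self-intersection square of $r$, and the projection formula. This rewrites the upper-right composite as a push–pull $e_*\bigl(a^*E\otimes b^*\bbD F\otimes c^*K\bigr)$ along explicit maps $e,a,b,c : Y\to X_s$, where $Y:=(X_s\times_X X_s)\times_G(X_s\times_X X_s)$.

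The heart of the proof is then to exhibit an isomorphism $W\simeq Y$ that intertwines the four maps to $X_s$ on the two sides — matching the pushforward target with $e$, and the pullbacks for $E$, $\bbD F$, $K$ with $a$, $b$, $c$ — compatibly with the duality/antipode conventions that make $\Coh(X_s)^\op$ a right $\B^+$-module. Both $W$ and $Y$ are square-zero derived thickenings of $X_s\times_s X_s$ by a single shifted line bundle, namely the pullback along $X_s\times_s X_s\to s$ of $\mathfrak m_s/\mathfrak m_s^2$: for $W$ this is the conormal bundle of $X_s\times_s X_s$ in $X\times_S X$; for $Y$ it is built from the conormal bundle of $X_s$ in $X$, equivalently from the non-reduced (Koszul) structure of $G=s\times_S s$. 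Once one checks that these thickenings, and all the projections in question, match under $W\simeq Y$, substituting into the two push–pull formulas gives the commutativity, and naturality in $(E,F,K)$ is built into the construction. I expect this matching step to be the main obstacle: one must pin down the isomorphism $W\simeq Y$ together with the compatibility of every structure map while correctly tracking the derived (Koszul) data — in particular that the derived self-intersection producing $j^*j_*$ supplies exactly the extra shifted direction that reflects the non-reducedness of $G$, and that the $G$-action corresponds to the correct leg of $W$ under the duality conventions. A minor additional point is that $\wt\phi$ only takes values in $\QCoh(X_s)$, so one should note that no coherence or boundedness is used in establishing commutativity of the square — that enters only later, when these functors are cut down to the categories on which $\wt\ev$ and the explicit duality datum live.
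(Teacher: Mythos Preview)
Your approach is correct and essentially the same as the paper's: both composites are unwound into push--pull expressions via base change and the projection formula, and the crux is exactly the identification of your $W$ with your $Y$. The step you flag as the main obstacle is in fact elementary: the paper handles it with the single derived Cartesian square exhibiting $G \times_s (X_s \times_s X_s) \simeq (X_s \times_s X_s) \times_{X \times_S X} (X_s \times_s X_s)$ (one projection being the diagonal $G$-action $\nu$, the other the projection off $G$), which follows formally from $X_s \times_s X_s \simeq (X \times_S X) \times_S s$ and $G = s \times_S s$, so no explicit Koszul bookkeeping is required.
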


\begin{proof}

Let $E, F_1, F_2 \in \Coh(X_s)$. The upper path sends $(E,F_1, F_2)$ to 
$$
\wt \ev(E, F_1) \star F_2,
$$
which simplifies as
$$
M:=
(q_1)_*
\left(
r^* 
r_* q^* 
\bigt{ E \boxtimes_S \bbD F_1 }
\otimes 
q_2^*(F_2)
\right),
$$
where we recall that $q_1, q_2: X_s \times_{X} X_s \rr X_s$ are the two projections.

The bottom path sends $(E, F_1, F_2)$ to 
$$
N:=
(\pr_2)_* 
\bigt{
\pr_1^*(E) \otimes j^* j_* (\bbD F_1 \boxtimes_s F_2)
}.
$$
Our goal is construct a functorial isomorphism $M \simeq N$. 

We start by manipulating $M$. Using Section \ref{sssec:notations}, we have:
$$
M \simeq
\mu_*
\left(
r^* 
r_* q^* 
\bigt{ E \boxtimes_S \bbD F_1 }
\otimes 
\pr^* F_2
\right).
$$
Next, base-change along the fiber square
\begin{equation} 
\nonumber
\begin{tikzpicture}[scale=1.5]
\node (00) at (0,0) {$ G \times_s X_s $};
\node (10) at (3,0) {$G $ };
\node (01) at (0,1) {$ G \times_s X_s \times_s X_s $};
\node (11) at (3,1) {$G \times_s X_s $};
\path[->,font=\scriptsize,>=angle 90]
(00.east) edge node[above] {$r$}  (10.west); 
\path[->,font=\scriptsize,>=angle 90]
(01.east) edge node[above] {$\id_G \times \pr_1 $} (11.west); 
\path[->,font=\scriptsize,>=angle 90]
(01.south) edge node[right] {$ \id_G \times \pr_2$} (00.north);
\path[->,font=\scriptsize,>=angle 90]
(11.south) edge node[right] {$r$} (10.north);
\end{tikzpicture}
\end{equation}
and the projection formula yield
\begin{eqnarray}
\nonumber
M 
& \simeq &
\mu_*
\Bigt{
(\id_G \times \pr_2)_* 
(\id_G \times \pr_1)^* q^* 
\bigt{ E \boxtimes_S \bbD F_1 }
\otimes 
\pr^* F_2
}
\\
\nonumber
& \simeq &
\mu_* (\id_G \times \pr_2)_* 
\Bigt{
(\id_G \times \pr_1)^* q^* 
\bigt{ E \boxtimes_S \bbD F_1 }
\otimes 
(\id_G \times \pr_2)^* 
\pr^* F_2
}.
\end{eqnarray}
We now use the observation of Section \ref{sssec:notations} to replace $q^* 
\bigt{ E \boxtimes_S \bbD F_1 }$ with $\mu^*E \otimes \pr^*(\bbD F_1)$. This yields
$$
M 
\simeq
\mu_* (\id_G \times \pr_2)_* 
\Bigt{
(\id_G \times \pr_1)^* \mu^* E
\otimes 
 (\id_G \times \pr_1)^* 
\pr^* (\bbD F_1 )
\otimes 
(\id_G \times \pr_2)^* 
\pr^* F_2
}.
$$
Now, it is obvious that 
$$
 (\id_G \times \pr_1)^* 
\pr^* (\bbD F_1 )
\otimes 
(\id_G \times \pr_2)^* 
\pr^* F_2
\simeq
\O_G \boxtimes \bbD F_1 \boxtimes F_2,
$$
where the external product is the one given by the three projections of $G\times_s X_s \times_s X_s$.

It remains to simplify the compositions $\mu \circ (\id_G \times \pr_i)$ for $i=1,2$.  To this end, we consider the diagonal action of $G$ on $X_s \times_s X_s$.
Denoting by $\nu$ the action map, it is clear that
$$
\mu \circ (\id_G \times \pr_i)
\simeq
\pr_i \circ \nu.
$$
for $i = 1,2$. All in all, we obtain
$$
M 
\simeq
(\pr_2)_* \nu_*
\Bigt{
\nu^* (\pr_1)^* E
\otimes 
\O_G \boxtimes \bbD F_1 \boxtimes F_2
}
\simeq
(\pr_2)_*
\Bigt{
(\pr_1)^* E
\otimes 
\nu_*(
\O_G \boxtimes \bbD F_1 \boxtimes F_2)
}.
$$
To conclude our proof, we just need to show that $\nu_*(\O_G \boxtimes -) \simeq j^* j_*(-)$. For this, we look at the fiber square
\begin{equation} 
\nonumber
\begin{tikzpicture}[scale=1.5]
\node (00) at (0,0) {$ X_s \times_s X_s $};
\node (10) at (3.5,0) {$X \times_S X$ };
\node (01) at (0,1) {$ G \times_s X_s \times_s X_s $};
\node (11) at (3.5,1) {$X_s \times_s X_s $};
\path[->,font=\scriptsize,>=angle 90]
(00.east) edge node[above] {$j$}  (10.west); 
\path[->,font=\scriptsize,>=angle 90]
(01.east) edge node[above] {$\pr_G \times \id_{X_s \times_s X_s} $} (11.west); 
\path[->,font=\scriptsize,>=angle 90]
(01.south) edge node[right] {$ \nu$} (00.north);
\path[->,font=\scriptsize,>=angle 90]
(11.south) edge node[right] {$j$} (10.north);
\end{tikzpicture}
\end{equation}
and apply base-change.
Since all the equivalences in the steps above are functorial (base-change equivalences and projection formulas), we are done.
\end{proof}

\begin{cor}
The essential image of $\wt\phi$ is contained in $\Coh(X_s)$. Thus, from now on we consider $\wt\phi$ as a functor 
$\wt\phi: \Coh(X_s) 
\otimes_A 
\Coh(X \times_S X)_{X_s \times_s X_s}
\to
\Coh( X_s)$.
\end{cor}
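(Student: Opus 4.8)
The plan is to bootstrap the statement from Proposition~\ref{prop: comm diag for ev} together with the equivalence
$$
\Coh(X_s)^{\op} \usotimes_{\B^+} \Coh(X_s) \xto{\;\;\simeq\;\;} \Coh(X \times_S X)_{X_s \times_s X_s}
$$
recalled before Theorem~\ref{thm:TV}, which is \emph{induced by} the explicit functor $\wt\ffF$. The guiding observation is that, although $\wt\phi$ is built from a pushforward along the non-proper projection $\pr_1$ --- so that coherence of the output is not visible term by term --- the commutative square of Proposition~\ref{prop: comm diag for ev} rewrites $\wt\phi$, on a thick-generating class of inputs, in terms of $\wt\ev$ followed by the convolution action $\star$, both of which visibly preserve coherence.

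The first step is to fix $E \in \Coh(X_s)$ and view $\wt\phi(E,-) \colon \Coh(X \times_S X)_{X_s \times_s X_s} \to \QCoh(X_s)$ as an exact functor (a composite of a pullback, a tensor with a fixed object, the restriction $j^*$, and a derived pushforward). Since $\Coh(X_s) \subseteq \QCoh(X_s)$ is a thick subcategory, the full subcategory $\ccC_E$ of those $H$ for which $\wt\phi(E,H) \in \Coh(X_s)$ is itself thick. Now the pure tensors $\wt\ffF(F_1,F_2)$, with $F_1,F_2 \in \Coh(X_s)$, generate $\Coh(X \times_S X)_{X_s \times_s X_s}$ under shifts, finite colimits and retracts --- this is exactly what it means for the displayed equivalence to be induced by $\wt\ffF$ --- so it suffices to check $\wt\ffF(F_1,F_2) \in \ccC_E$. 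By Proposition~\ref{prop: comm diag for ev} one has $\wt\phi\bigt{E, \wt\ffF(F_1,F_2)} \simeq \wt\ev(E,F_1) \star F_2$, and this lies in $\Coh(X_s)$: indeed $\wt\ev$ takes values in $\B^+ = \Coh(G)$, and the convolution $\star$ of $\B^+$ on $\Coh(X_s)$ is, by construction, a functor landing in $\Coh(X_s)$. Hence $\ccC_E = \Coh(X \times_S X)_{X_s \times_s X_s}$ for every $E$.

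The final step is a second, identical thickness argument to pass from pure tensors to the whole source: the full subcategory of $\Coh(X_s) \otimes_A \Coh(X \times_S X)_{X_s \times_s X_s}$ on which $\wt\phi$ takes values in $\Coh(X_s)$ is thick and, by the previous step, contains every pure tensor $E \otimes H$; since pure tensors generate a dg-tensor product under shifts, finite colimits and retracts, $\wt\phi$ lands in $\Coh(X_s)$, as claimed. I do not expect a genuine obstacle here; the only point demanding a little care is the reduction to the generators $\wt\ffF(F_1,F_2)$, which relies on the equivalence of Theorem~\ref{thm:TV} (and its $\Coh$-level incarnation) being realised by the \emph{explicit} functor $\wt\ffF$ rather than merely asserted abstractly. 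Granting that, everything is formal.
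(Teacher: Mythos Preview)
Your proposal is correct and follows essentially the same approach as the paper: the paper's proof simply says the assertion follows from the commutativity of the diagram in Proposition~\ref{prop: comm diag for ev}, the fact that the $\B^+$-action lands in $\Coh(X_s)$, and that $\wt\ffF$ Karoubi-generates the target. You have spelled out the thick-subcategory reduction in more detail (including the second reduction over the first tensor factor), but the underlying argument is identical.
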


\begin{proof}
Recall that the $\B^+$-action functor $\B^+ \otimes_A \Coh(X_s) \to \QCoh(X_s)$ lands in $\Coh(X_s)$. Then the assertion follows from the commutativity of \eqref{diag:main} and the fact that $\wt \ffF$ Karoubi-generates the target.
\end{proof}

\begin{cor}
The functor $\wt\phi$ descends to a functor 
$$
\phi:
\Sing(X_s) \otimes_A \Sing (X \times_S X)
\longto
\Sing(X_s).
$$
\end{cor}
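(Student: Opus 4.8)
The plan is to mimic the proof of the preceding corollary one level deeper, using the commutative diagram of Proposition~\ref{prop: comm diag for ev} to reduce the statement to facts already established: that $\wt\ev$ descends to $\ev$ (Lemma~\ref{lem:ev}), that the $\B^+$-action $\star$ descends to the $\B$-action on $\T$, together with the fact that $\Perf(G)$ acts on $\Coh(X_s)$ valued in $\Perf(X_s)$ and $\Coh(G)$ acts on $\Perf(X_s)$ valued in $\Perf(X_s)$ (\cite[Proposition 4.1.5 and \S 4.1.3]{tv22}), and that $\wt\ffF$ descends to the equivalence $\ffF$ of Theorem~\ref{thm:TV}.

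First I would build the candidate functor. Out of $\ev\colon\T\otimes_A\T^{\op}\to\B$, the descended action $\B\otimes_\B\T\xrightarrow{\sim}\T$, and the equivalence $\ffF\colon\T^{\op}\otimes_\B\T\xrightarrow{\sim}\Sing(X\times_S X)$, one forms the composite
$$
\T\otimes_A\Sing(X\times_S X)\ \xrightarrow{\ \id_\T\otimes\ffF^{-1}\ }\ \T\otimes_A\T^{\op}\otimes_\B\T\ \xrightarrow{\ \ev\otimes\id_\T\ }\ \B\otimes_\B\T\ \xrightarrow{\ \sim\ }\ \T ,
$$
and declares it to be $\phi$. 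The replacement of the tensor product $\otimes_{\B^+}$ appearing in Proposition~\ref{prop: comm diag for ev} by $\otimes_\B$ above is exactly the passage to the descended action.

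Then I would check that $\phi$ is genuinely the descent of $\wt\phi$, that is, that the square with top arrow $\wt\phi$, bottom arrow $\phi$ and vertical arrows the projections to singularity categories commutes. Here one invokes that $-\otimes_A-$ preserves Drinfeld--Verdier quotients and that $\Sing(X\times_S X)\simeq\Coh(X\times_S X)_{X_s\times_s X_s}/\Perf(X\times_S X)_{X_s\times_s X_s}$, which holds since the singular locus of $X\times_S X$ is contained in the closed subscheme $X_s\times_s X_s$. Both ways around the square are exact functors out of $\Coh(X_s)\otimes_A\Coh(X\times_S X)_{X_s\times_s X_s}$, so it is enough to compare them after precomposition with $\id_{\Coh(X_s)}\otimes\wt\ffF$, because $\wt\ffF$ Karoubi-generates $\Coh(X\times_S X)_{X_s\times_s X_s}$, exactly as in the previous corollary. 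After that precomposition, Proposition~\ref{prop: comm diag for ev} identifies the upper composite with $\star\circ(\wt\ev\otimes\id)$, up to the projection to $\Sing(X_s)$; and by the construction of $\phi$ together with the descents of $\wt\ev$, $\star$ and $\wt\ffF$, the lower composite is literally the same functor. Hence the square commutes, which is precisely the assertion that $\wt\phi$ sends $\Perf(X_s)\otimes_A\Coh(X\times_S X)_{X_s\times_s X_s}$ and $\Coh(X_s)\otimes_A\Perf(X\times_S X)_{X_s\times_s X_s}$ into $\Perf(X_s)$.

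I expect the main obstacle to be not a computation --- Proposition~\ref{prop: comm diag for ev} does the geometric work --- but the careful bookkeeping of the three tensor products $\otimes_A$, $\otimes_{\B^+}$, $\otimes_\B$ and of the accompanying left/right $\B$- and $\B^{\rev}\otimes_A\B$-module structures, together with ensuring that the two structural inputs are at our disposal: the identification of $\Sing(X\times_S X)$ with a quotient of $\Coh(X\times_S X)_{X_s\times_s X_s}$, and the compatibility of the $\B^+$-action with perfect complexes. Both are available (from \cite{tv22} and the material recalled above), so once the module-theoretic matching is in place the conclusion is formal, in the same way the preceding corollary followed from Proposition~\ref{prop: comm diag for ev}.
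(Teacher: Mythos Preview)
Your proposal is correct and follows essentially the same route as the paper. The paper phrases the argument slightly more directly---it reduces, via the commutative diagram of Proposition~\ref{prop: comm diag for ev} and the Karoubi-generation by $\wt\ffF$, to the claim that $\wt\ev(F_0,F_1)\star F_2$ is perfect whenever one of the $F_i$ is, and then dispatches the three cases using Lemma~\ref{lem:ev} and the fact that the $\B^+$-action preserves $\Perf(X_s)$---whereas you construct $\phi$ first and then verify it is the descent; but the ingredients and logic are identical.
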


\begin{proof}
Using the commutativity of \eqref{diag:main} and the equivalence $\wt \ffF$ again, it suffices to prove the following claim. Given three objects $F_0,F_1, F_2$ in $\Coh(X_s)$, the object $\wt\ev(F_0, F_1) \star F_2$ is perfect as soon as one among the $F_i$'s is.
Since the $\B^+$-action on $\Coh(X_s)$ preserves $\Perf(X_s)$, the assertion is clear in the case $F_2$ is perfect. In the other two cases, Lemma \ref{lem:ev} guarantees that $\wt\ev(F_0,F_1) \in \Perf(G)$ and we are done.
\end{proof}

\begin{cor}
The diagram 
\begin{equation}  \label{diag:main for sing}
\begin{tikzpicture}[scale=1.5]
\node (00) at (0,0) {$ \Sing(X_s) \otimes_A \Sing(X \times_S X)$};
\node (10) at (4,0) {$\Sing(X_s)$.};
\node (01) at (0,1) {$ \Sing(X_s) \otimes_A \Sing(X_s)^\op \otimes_{\B} \Sing(X_s)$};
\node (11) at (4,1) {$\B \otimes_{\B} \Sing(X_s)$};
\path[->,font=\scriptsize,>=angle 90]
(00.east) edge node[above] {$\phi$}  (10.west); 
\path[->,font=\scriptsize,>=angle 90]
(01.east) edge node[above] {$\ev \otimes \id $} (11.west); 
\path[->,font=\scriptsize,>=angle 90]
(01.south) edge node[right] {$ \id \otimes \ffF$} (00.north);
\path[->,font=\scriptsize,>=angle 90]
(11.south) edge node[right] {$\star$} (10.north);
\end{tikzpicture}
\end{equation}
commutes naturally.

\end{cor}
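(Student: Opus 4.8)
The plan is to deduce the commutativity of \eqref{diag:main for sing} from that of \eqref{diag:main} (Proposition \ref{prop: comm diag for ev}) by descent along Drinfeld--Verdier localizations. Concretely, I would organize the data into a commutative cube whose ``source square'' is \eqref{diag:main}, with the corner $\QCoh(X_s)$ replaced by $\Coh(X_s)$ as permitted by the corollary on the essential image of $\wt\phi$; whose ``target square'' is \eqref{diag:main for sing}; and whose four edges joining corresponding corners are the evident localization functors: $\Coh(X_s)\to\Sing(X_s)$; the functor induced by $\Coh(G)\to\Sing(G)$ on $\B^+\otimes_{\B^+}\Coh(X_s)\simeq\Coh(X_s)$; the functor $\Coh(X\x{S}X)_{X_s\x{s}X_s}\to\Sing(X\x{S}X)$; and
$$
L\colon \Coh(X_s)\otimes_A\Coh(X_s)^\op\otimes_{\B^+}\Coh(X_s)\longto\Sing(X_s)\otimes_A\Sing(X_s)^\op\otimes_\B\Sing(X_s),
$$
obtained by localizing each tensor factor and base-changing the middle relative tensor along $\B^+\to\B$. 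The first thing to record is that $L$ — and, likewise, the three other edge functors — is itself a Drinfeld--Verdier localization: this follows from $\Sing(-)=\Coh(-)/\Perf(-)$, from $\B=\B^+/\Perf(G)$, and from the fact that $-\otimes_A-$ and the relative tensor products in play preserve localizations (compare \cite[\S\S 4.1--4.2]{tv22}). In particular, precomposition with $L$ is fully faithful on the relevant functor $\infty$-categories.

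Next I would verify that the four lateral faces of the cube commute; this is essentially built into the earlier constructions. The face carrying the arrows $\wt\ev\otimes\id$ and $\ev\otimes\id$ (the ``$\ev$-face'') commutes because, by Lemma \ref{lem:ev}, $\ev$ is the descent of $\wt\ev$ along the localizations, tensored with $\id_{\Sing(X_s)}$. The ``$\ffF$-face'', carrying $\id\otimes\wt\ffF$ and $\id\otimes\ffF$, commutes because $\ffF$ is the descent of $\wt\ffF$, which is precisely the statement of Theorem \ref{thm:TV}. The ``$\phi$-face'', carrying $\wt\phi$ and $\phi$, commutes because $\phi$ is the descent of $\wt\phi$ supplied by the corollary immediately preceding the statement. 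And the ``$\star$-face'' commutes because the $\B$-action on $\T=\Sing(X_s)$ is, by construction, induced from the $\B^+$-action on $\Coh(X_s)$ (cf. \cite[Prop. 4.1.5]{tv22}). Each of these squares carries a canonical homotopy, so the cube assembles into a coherent diagram.

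Finally I would splice these homotopies. Starting from $\phi\circ(\id\otimes\ffF)\circ L$ and applying in turn the $\ffF$-face, the $\phi$-face, the source square \eqref{diag:main}, the $\star$-face and the $\ev$-face, one produces a canonical homotopy
$$
\phi\circ(\id\otimes\ffF)\circ L\;\simeq\;\star\circ(\ev\otimes\id)\circ L.
$$
Because precomposition with the localization $L$ is fully faithful on functor $\infty$-categories, this homotopy descends uniquely to a homotopy $\phi\circ(\id\otimes\ffF)\simeq\star\circ(\ev\otimes\id)$ — exactly the asserted natural commutativity of \eqref{diag:main for sing}.

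I expect the main obstacle to be homotopy-coherent bookkeeping rather than any substantive new idea: one must check with care that $L$ and the three remaining edge functors are genuine localizations, and that the cube exists as a single coherent diagram (rather than as four objectwise-commuting squares patched together by hand). In particular I would be attentive to the behaviour of the middle, relative-tensor variable under the base change $\B^+\to\B$, using that $\T^\op\otimes_\B\T$ arises from $\Coh(X_s)^\op\otimes_{\B^+}\Coh(X_s)$ precisely by the localization of Theorem \ref{thm:TV}. Once these points are in place the argument is formal, the content residing entirely in Proposition \ref{prop: comm diag for ev}.
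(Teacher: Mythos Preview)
Your proposal is correct and matches the paper's (implicit) approach: the paper states this corollary without proof, treating it as immediate from Proposition~\ref{prop: comm diag for ev} together with the fact that each arrow in \eqref{diag:main for sing} is, by construction, the descent of the corresponding arrow in \eqref{diag:main} along the relevant Verdier localizations. Your cube argument is exactly the careful unpacking of this one-line justification; the only content beyond bookkeeping is indeed Proposition~\ref{prop: comm diag for ev}, as you note.
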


\begin{lem}\label{lem: wt phi comp diag = id}
The functor
$$
 \phi(-, \Delta_X) : \Sing(X_s) \to \Sing(X_s)
$$
is naturally homotopic to $\id_{\Sing(X_s)}$.
\end{lem}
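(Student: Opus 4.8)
The plan is to trace the definition of $\phi(-, \Delta_X)$ through the commutative diagram \eqref{diag:main for sing} and reduce the claim to a computation about the functor $\wt\ev$ together with the $\B^+$-action. By construction, $\phi(E, \Delta_X) = \wt\ev(E, \O_{X_s}) \star \O_{X_s}$ in the world of $\Coh$, after noting that the diagonal object $\Delta_X = \delta_*(\O_X)$ corresponds, under the equivalence $\ffF$ (coming from $\wt\ffF$), to the pair $(\O_{X_s}, \O_{X_s})$ viewed in $\Coh(X_s)^\op \otimes_{\B^+} \Coh(X_s)$. More precisely, one must check that $\wt\ffF(\O_{X_s}, \O_{X_s}) \simeq j_*(\O_{X_s \times_s X_s})$ is identified with (the image of) $\delta_*(\O_X)$ in $\Sing(X \times_S X)$; this is where the regularity of $X$ enters, since $j_*(\O_{X_s \times_s X_s})$ and $\delta_*(\O_X)$ differ by a perfect complex once restricted appropriately — indeed $\O_{X_s \times_s X_s}$ is the derived pullback of $\O_X$ along the quasi-smooth map $X_s \times_s X_s \to X \times_S X$ composed with $\delta$, up to perfect error. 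So the first step is to pin down this identification at the level of objects.

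Granting that, the diagram \eqref{diag:main for sing} tells us $\phi(E, \Delta_X) \simeq (\wt\ev(E, \O_{X_s})) \star \O_{X_s}$, computed in $\Sing(X_s)$. Now I would compute $\wt\ev(E, \O_{X_s})$ directly using Remark \ref{rem:ev}: it is $(\pr_G)_*(\mu^* E \otimes \pr^*(\bbD \O_{X_s})) \simeq (\pr_G)_*(\mu^* E)$, an object of $\B^+ = \Coh(G)$, where $\pr_G, \mu, \pr : G \times_s X_s \rightrightarrows$ are the projection, the action and the other projection. Then $\star$-ing with $\O_{X_s}$ means applying the $\B^+$-action map $G \times_s X_s \to X_s$ to the external product of this object with $\O_{X_s}$; unwinding, this becomes $\mu_*\, \mu^* E$ along the action map $\mu: G \times_s X_s \to X_s$ (using the identification of Section \ref{sssec:notations} that relabels $q_1, q_2, r$ as $\mu, \pr, \pr_G$). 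The key point is now that $G = s \times_S s$ is the derived groupoid and $\mu$ admits the identity section $e: X_s \hookrightarrow G \times_s X_s$ with $\mu \circ e = \id_{X_s}$; by the projection formula $\mu_* \mu^* E \simeq E \otimes \mu_*(\O_{G \times_s X_s})$, and $\mu_*(\O_{G \times_s X_s}) \simeq (p_{X_s})^* (e_s)_*(\O_s) \otimes (\text{something})$ — more to the point, $\mu_* \O \simeq \O_{X_s} \otimes_A \mathrm{R}\Gamma(G, \O_G)$, and this cohomology $\mathrm{R}\Gamma(s \times_S s, \O)$ is a perfect $A$-complex concentrated so that, after passing to the Verdier quotient $\Sing(X_s) = \Coh(X_s)/\Perf(X_s)$, tensoring by it becomes naturally equivalent to the identity. (Concretely: $\mathrm{R}\Gamma(G,\O_G) \simeq A \oplus (\text{shifts of } A)$ as a complex, the unit $A \hookrightarrow \mathrm{R}\Gamma(G,\O_G)$ splits off the copy coming from the identity section, and the complementary summands, being free finite $A$-modules, act as perfect — hence zero — operators on $\Sing(X_s)$.) So the composite $E \mapsto \mu_* \mu^* E$ is naturally homotopic to the identity after projecting to $\Sing(X_s)$.

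The main obstacle I anticipate is the \emph{naturality} — i.e., producing the homotopy $\phi(-,\Delta_X) \simeq \id$ as an equivalence of functors, not just a pointwise isomorphism — together with bookkeeping the several base-change and projection-formula identifications so that they glue coherently. The cleanest route is to run the whole argument at the level of $\Coh$ (or $\QCoh$) as in the proof of Proposition \ref{prop: comm diag for ev}: exhibit a chain of functorial equivalences $\wt\phi(-, j_*\O_{X_s\times_s X_s}) \simeq (-) \otimes_A \mathrm{R}\Gamma(G, \O_G)$, each step being a base-change equivalence or projection formula (hence automatically natural), and only at the very end pass to the Verdier quotients, where the unit map $\id \to (-)\otimes_A \mathrm{R}\Gamma(G,\O_G)$ becomes an equivalence because its cofiber is valued in $\Perf(X_s)$. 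The nontrivial input making this last reduction work is that $\mathrm{R}\Gamma(G, \O_G)$ is a perfect $A$-module whose reduction is $A$ itself — this is exactly the statement that $s \times_S s$ is a quasi-smooth (indeed, an affine derived) thickening of $s$, which holds because $S$ is a trait. Once that is in place, Lemma \ref{lem: wt phi comp diag = id} follows, and it is precisely the ``$\id \otimes \coev$ then $\ev \otimes \id$ equals $\id$'' zig-zag identity \eqref{eqn:zorro-1} specialized to the object $\Delta_X$.
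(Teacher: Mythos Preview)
Your proposal has a genuine gap at its very first step: the identification of $\Delta_X$ with the image of $(\O_{X_s},\O_{X_s})$ under $\ffF$ is false. Indeed, $\wt\ffF(\O_{X_s},\O_{X_s}) = j_*\O_{X_s \times_s X_s}$ is the structure sheaf of the special fiber of $X\times_S X \to S$; since $X\times_S X$ is flat over $S$, this special fiber is a Cartier divisor, so $j_*\O_{X_s\times_s X_s}$ is \emph{perfect} on $X\times_S X$ and hence \emph{zero} in $\Sing(X\times_S X)$. But $\delta_*\O_X$ is certainly nonzero there (unless $X/S$ is smooth). So the object you are computing with, namely $\wt\ev(E,\O_{X_s})\star\O_{X_s}$, is not $\phi(E,\Delta_X)$ at all. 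This also explains why your downstream computation does not return $E$: even your simplified expression $\mu_*\mu^*E$ (which itself drops a $\pr_G^*(\pr_G)_*$ that you are not entitled to drop) gives $E\otimes_{\O_{X_s}}\mu_*\O \simeq E\oplus E[1]$, since $\mu_*\O_{G\times_s X_s}\simeq \O_{X_s}\oplus\O_{X_s}[1]$; the shifted summand does not vanish in $\Sing(X_s)$.

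The paper's approach sidesteps the need to find a preimage of $\Delta_X$ under $\ffF$ altogether. One works directly with the defining formula $\wt\phi(E,H)=(\pr_1)_*\bigl(\pr_2^*E\otimes j^*H\bigr)$ and plugs in $H=\Delta_X$. The key observation is that the square
\[
\begin{tikzcd}
X_s \ar[r,"\delta_{X_s}"] \ar[d,"i"'] & X_s\times_s X_s \ar[d,"j"] \\
X \ar[r,"\delta"'] & X\times_S X
\end{tikzcd}
\]
is a derived fiber square, so base change gives $j^*\Delta_X \simeq (\delta_{X_s})_*\O_{X_s}$. The projection formula then shows that $\wt\phi(-,\Delta_X)$ is the identity already on $\Coh(X_s)$, and one passes to $\Sing(X_s)$ at the end. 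In particular, no manipulation of $\mathrm{R}\Gamma(G,\O_G)$ is needed, and the naturality is immediate since each step is a functorial base-change or projection-formula isomorphism.
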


\begin{proof}
We will prove a stronger statement.
By Proposition \ref{prop: comm diag for ev} we have the commutative diagram
\begin{equation}
\nonumber
\begin{tikzpicture}[scale=1.5]
\node (00) at (0,0) {$ \Coh(X_s) \otimes_A \Coh(X \times_S X)_{X_s \times_s X_s}$};
\node (10) at (4,0) {$\Coh(X_s)$};
\node (01) at (0,1) {$ \Coh(X_s) \otimes_A \Coh(X_s)^\op \otimes_{\B^+} \Coh(X_s)$};
\node (11) at (4,1) {$\B^+ \otimes_{\B^+} \Coh(X_s)$};
\node (-01) at (0,-1) {$\Coh(X_s) \otimes_A \Coh(X \times_S X)$};

\node (-11) at (4,-1) {$\QCoh(X_s).$};

\path[->,font=\scriptsize,>=angle 90]
(00.east) edge node[above] {$\wt \phi$}  (10.west); 
\path[->,font=\scriptsize,>=angle 90]
(01.east) edge node[above] {$\wt\ev \otimes \id $} (11.west); 

\path[->,font=\scriptsize,>=angle 90]
(-01.east) edge node[above] {$\wt \phi$} (-11.west);
\path[->,font=\scriptsize,>=angle 90]
(01.south) edge node[right] {$ \id \otimes \wt \ffF$} (00.north);
\path[->,font=\scriptsize,>=angle 90]
(11.south) edge node[right] {$\star$} (10.north);

\path[->,font=\scriptsize,>=angle 90]
(00.south) edge node[right] {$id \otimes \mathit{incl}$} (-01.north);

\path[->,font=\scriptsize,>=angle 90]
(10.south) edge node[right] {$\mathit{incl}$} (-11.north);
\end{tikzpicture}
\end{equation}
Unraveling the definition, we see that
$$
\wt \phi (-, \Delta_X) 
\simeq
(\pr_2)_*
\bigt{
\pr_1^* (-)
\otimes
j^* (\Delta_X)
}.
$$
Now, observing that the diagram
\begin{equation} 
\nonumber
\begin{tikzpicture}[scale=1.5]
\node (00) at (0,0) {$ X $};
\node (10) at (2,0) {$X \times_S X$ };
\node (01) at (0,1) {$ X_s $};
\node (11) at (2,1) {$X_s \times_s X_s $};
\path[->,font=\scriptsize,>=angle 90]
(00.east) edge node[above] {$\delta$}  (10.west); 
\path[->,font=\scriptsize,>=angle 90]
(01.east) edge node[above] {$\delta_{X_s} $} (11.west); 
\path[->,font=\scriptsize,>=angle 90]
(01.south) edge node[right] {$ i $} (00.north);
\path[->,font=\scriptsize,>=angle 90]
(11.south) edge node[right] {$j$} (10.north);
\end{tikzpicture}
\end{equation}
is a (derived) fiber square, we get that
$j^* \Delta_X \simeq \Delta_{X_s} := (\delta_s)_*(\O_{X_s})$ and the assertion follows from the projection formula.

To conclude, observe that $X/S$ is generically smooth and so the functor $\Coh(X\times_S X)_{X_s \times_s X_s}\hookrightarrow \Coh(X\times_S X)$ induces an equivalence
$$
  \Sing(X\times_S X)_{X_s \times_s X_s}\simeq \Sing(X\times_S X).
$$
Thus the claim for the singularity category follows.
\end{proof}

\sssec{}

We can now conclude the proof that the pair $(\ev,\coev)$ is a duality datum for $\T$ over $\B$.

\begin{prop}\label{prop: explicit duality datum for T/S}
The functors
$$
\coev: \Perf(S) \to \T^{\op}\otimes_{\B}\T, \hspace{0.5cm} \ev: \T \otimes_A \T^{\op}\to \B
$$
defined above form a duality datum for the left $\B$-module $\T$.
\end{prop}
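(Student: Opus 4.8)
The statement asks us to verify the two triangle (``Zorro'') identities \eqref{eqn:zorro-1} and \eqref{eqn:zorro-2}, i.e. that both composites are canonically homotopic to the respective identity functors. The plan is to obtain \eqref{eqn:zorro-1} essentially for free from the commutative square \eqref{diag:main for sing} together with Lemma~\ref{lem: wt phi comp diag = id}, and to obtain \eqref{eqn:zorro-2} by running the mirror image of the same computation. The one input carried throughout is that, under the equivalence $\ffF$ of Theorem~\ref{thm:TV}, the coevaluation is the functor $\ell$, so that $\ffF\circ\coev$ sends $\O_S$ to the class of the diagonal $\Delta_X$.

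First I would dispose of \eqref{eqn:zorro-1}. By construction the composite there is $\star\circ(\ev\otimes\id_\T)\circ(\id_\T\otimes\coev)$, where $\star\colon\B\otimes_\B\T\to\T$ is the action functor. The square \eqref{diag:main for sing} identifies $\star\circ(\ev\otimes\id_\T)$ with $\phi\circ(\id_\T\otimes\ffF)$; precomposing with $\id_\T\otimes\coev$ and using $\ffF\circ\coev\simeq\ell$ then identifies \eqref{eqn:zorro-1} with the functor $\phi(-,\Delta_X)$, which is homotopic to $\id_\T$ by Lemma~\ref{lem: wt phi comp diag = id}. So this half is immediate once the corollaries of Proposition~\ref{prop: comm diag for ev} are in place.

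For \eqref{eqn:zorro-2} I would build the mirror picture. The plan is to introduce the left--right reflection of $\wt\phi$, namely a functor
\[
\wt\psi\colon \Coh(X\times_S X)_{X_s\times_s X_s}\otimes_A\Coh(X_s)^\op\longto\QCoh(X_s)^\op,
\qquad
(H,F)\squigto(\pr_2)_*\bigt{j^*H\otimes\pr_1^*F},
\]
and to prove the analogue of Proposition~\ref{prop: comm diag for ev}: the square comparing $\wt\psi$ with $\id\otimes\wt\ev$ through $\wt\ffF\otimes\id$ and the $\B^+$-action commutes. This is the same base-change-and-projection-formula chase as before, now applied to the fiber squares obtained from the ``swap'' autoequivalence of $X_s\times_X X_s$ and using the identifications recorded in Section~\ref{sssec:notations}. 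Passing to singularity categories, $\wt\psi$ descends to $\psi\colon\Sing(X\times_S X)\otimes_A\Sing(X_s)^\op\to\Sing(X_s)^\op$, the composite \eqref{eqn:zorro-2} gets identified with $\psi(\Delta_X,-)$, and since $j^*\Delta_X\simeq\Delta_{X_s}$ (the defining square of $\delta$ in the proof of Lemma~\ref{lem: wt phi comp diag = id} is a derived pullback), the projection formula collapses $\psi(\Delta_X,-)$ to $\id_{\Sing(X_s)^\op}$.

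The hard part will be the bookkeeping in this mirror step rather than any new idea: the opposite category $\Sing(X_s)^\op$ now intervenes, and the duality $\bbD=\ul\Hom(-,\O_{X_s})$ on $\Coh(X_s)$ is an honest involution only in the Gorenstein case, so one must carry along the twist by the dualizing complex and check that it becomes invisible on $\Sing(X_s)$, where it is absorbed into $\Perf(X_s)$. (Alternatively, one could try to bypass this by using that $\T^\op$ is already known to be a right dual of $\T$, by the corollary to Theorem~\ref{thm:TV}, so that a pair of the expected shape satisfying one triangle identity is constrained to satisfy the other; but the direct mirror argument above is the one I would write.) In either case, $(\ev,\coev)$ is a duality datum for $\T$ over $\B$, as claimed.
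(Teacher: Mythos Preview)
Your proposal is correct and follows the paper's own proof exactly: the paper derives \eqref{eqn:zorro-1} from the square \eqref{diag:main for sing} together with Lemma~\ref{lem: wt phi comp diag = id}, and then declares \eqref{eqn:zorro-2} to be ``similar and left to the reader,'' which is precisely the mirror argument you sketch. Your Gorenstein worry is unnecessary here, since $X_s$ is a Cartier divisor in the regular scheme $X$ and is therefore automatically Gorenstein, so $\bbD = \ul\Hom(-,\O_{X_s})$ is an honest involution on $\Coh(X_s)$ without any twist to track.
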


\begin{proof}
It follows immediately from \eqref{diag:main for sing} and Lemma \ref{lem: wt phi comp diag = id} that the composition
$$
\T \simeq \T \otimes_A A
\xto{\id \otimes \coev} \T \otimes_A \T^{\op} \otimes_{\B} \T \xto{\ev \otimes \id} \B \otimes_{\B} \T \simeq \T
$$
is homotopic to the identity.
The proof that the composition
$$
\T^{\op} \simeq A \otimes_A \T^{\op}\xto{\coev \otimes \id} \T^{\op} \otimes_{\B} \T \otimes_A \T^{\op} \xto{\id \otimes \ev} \T^{\op} \otimes_{\B} \B \simeq \T^{\op}
$$
is homotopic to the identity is similar and left to the reader.
\end{proof}

\begin{rem}\label{rem: endomorphism induced by push and pull on T}
Let $f:X\to X$ be an $S$-linear endomorphism which preserves the singular locus. 
Observe that $f$ is quasi-smooth (because $X$ is regular). 
Therefore, it induces an endofunctor
$$
  (f_s)_* : \T \to \T,
$$
where $f_s$ denotes the endomorphism of $X_s$ induced by $f$. 
It is easy to check that $(f_s)_*$ is $\B$-linear and that the diagram
\begin{equation} 
\nonumber
\begin{tikzpicture}[scale=1.5]
\node (00) at (0,0) {$ \Sing(X\times_S X) $};
\node (10) at (2.5,0) {$ \Sing(X \times_S X)$ };
\node (01) at (0,1) {$ \T^\op \otimes_{\B} \T $};
\node (11) at (2.5,1) {$ \T^\op \otimes_{\B} \T $};
\path[->,font=\scriptsize,>=angle 90]
(00.east) edge node[above] {$(\id \times f)_*$}  (10.west); 
\path[->,font=\scriptsize,>=angle 90]
(01.east) edge node[above] {$\id \otimes (f_s)_*$} (11.west); 
\path[->,font=\scriptsize,>=angle 90]
(01.south) edge node[right] {$ \ffF $} (00.north);
\path[->,font=\scriptsize,>=angle 90]
(11.south) edge node[right] {$ \ffF $} (10.north);
\end{tikzpicture}
\end{equation}
is commutative.
In particular, we obtain that the composition
$$
  \Perf(S) \xto{\coev} \T^\op \otimes_{\B} \T \xto{id\otimes (f_s)_*}\T^\op \otimes_{\B} \T
$$
corresponds to the dg functor
$$
  \Perf(S) \to \Sing(X\times_S X)
$$
determined by the structure sheaf of the \emph{graph} of $f$.
\end{rem}

\sec{K-theoretic intersection theory on arithmetic schemes}\label{sec:K-intersection-thry}

In this section, we begin the proof of Theorem \ref{mainthm:hypersurface}.
Using the presentation of $X$ as a hypersurface, we construct a dg-functor whose decategorification recovers Bloch intersection number $\Bl(X/S)$. For this construction, we do not need the unipotence hypothesis of Theorem \ref{mainthm:hypersurface}.

\ssec{ 
Construction of the dg-functor of \virg{intersection with the diagonal}
}
\label{ssec:construction intersection with diag}

\sssec{}

We need to fix some notation.
For $W$ an $S$-scheme and $L$ a line bundle on it, we denote by $K(W,L,0)$ the derived self-intersection of the zero section of $W$ in the total space of $L$. The letter $K$ stands for \virg{Koszul}. 
When the line bundle is trivial, we omit it from the notation: in other words, we define
$$
K(W,0) 
:= 
W \times_{\bbA^1_W} W.
$$
When $W$ is regular, the scheme $K(W,L,0)$ is quasi-smooth over $W$.

\sssec{} \label{sssec:Orlov}

We will consider the dg-category of singularities $\Sing \bigt{K(W,L,0)}$. 
By a version of Orlov's theorem, $\Sing \bigt{K(W,L,0)}$ is equivalent to the dg-category $\MF(W,L,0)$ of $L$-twisted matrix factorizations on $W$. This equivalence
$$
  \Xi:
\Sing \bigt{K(W,L,0)}
  \xto{\;\;\sim \;\;}
  \MF(W,L,0)
$$
is induced by the dg-functor
$$
  \Xi^+:
 \Coh \bigt{K(W,L,0)}
  \to
  \MF(W,L,0)
$$
$$
  (M,d,h)
  \mapsto
  \Biggl [
  \dots
  \xto{d+h}
  \underbrace{\bigoplus_{i\in \bbZ}M^{2i+j}\otimes L^{\otimes i}}_{\on{deg} j}
  \xto{d+h}
  \underbrace{\bigoplus_{i\in \bbZ}M^{2i+j+1}\otimes L^{\otimes i}}_{\on{deg} j+1}
  \xto{d+h}
  \dots
  \Biggr ].
$$
Here $(M,d)$ is a bounded complex of coherent sheaves on $W$ and $h: M \to M[-1] \otimes L$ is such that $h^2 = 0$ (super)commuting with $d$.

\sssec{} 

We now impose the first hypothesis of Theorem \ref{mainthm:hypersurface}: we write $X=\on{V}(s)$, for a global section $s\in \uH^0(P,L)$ of a line bundle $L$ on a smooth $S$-scheme $P$.
This presentation yields the following two fiber squares:
    \begin{equation*}
        \begin{tikzpicture}[scale=1.5]
            \node (02) at (0,2) {$X\times_PX$};
            \node (12) at (2,2) {$P$};
            \node (01) at (0,1) {$X\times_SX$};
            \node (11) at (2,1) {$P\times_SP$};
            \node (00) at (0,0) {$P \times_S P$};
            \node (10) at (2,0) {$L \times_S L.$};
            \draw[->] (02) to node[above] {${}$} (12);
            \draw[->] (01) to node[above] {$$} (11);
            \draw[->] (00) to node[above] {$0 \times 0$} (10);
            \draw[->] (02) to node[left] {$d\delta_X$} (01);
            \draw[->] (12) to node[right] {$\delta_P$} (11);
            \draw[->] (01) to node[above] {$$} (00);
            \draw[->] (11) to node[right] {$s \times s$} (10);
        \end{tikzpicture}
    \end{equation*}

Observe that the diagonal $\delta_X: X \to X \times_S X$ factors as the composition
\begin{equation} \label{eqn:factoriz of diagonal}
X 
\xto{\iota}
K(X,L,0)
\simeq 
X\times_PX
\xto{d \delta_X}
X \times_S X,   
\end{equation}
with $\iota$ a derived thickening of $X$. We say that $d\delta_X$ is a \emph{derived enhancement} of $\delta_X$.

\sssec{}

By construction, the closed embedding $d\delta_X$ is quasi-smooth (while $\delta_X$ is not). 
This is an immediate consequence of the fact that $d\delta_X$ is the pullback of the diagonal $\delta_P:P \to P\times_SP$, which is quasi-smooth since $P$ is assumed to be smooth over $S$.
Hence, the quasi-coherent pullback dg-functor
$d\delta_X^*$ preserves both coherent and perfect objects, thereby inducing a dg-functor
\begin{equation} \label{eqn:d-delta pullback for Sing}
d\delta_X^*
:
\Sing(X \times_S X)
\to 
\Sing \bigt{K(X,L,0)}
\end{equation}
between the singularity categories.

\begin{lem}\label{lem: pullback along derived pullback factors through MF with support on the singular locus}
The dg-functor \eqref{eqn:d-delta pullback for Sing} factors through the full subcategory 
$$
\Sing \bigt{K(X,L,0)}_Z 
\subseteq
\Sing \bigt{K(X,L,0)},
$$
where $Z \subseteq X$ denotes the singular locus of $X/S$.
\end{lem}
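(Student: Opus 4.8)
The plan is to show that for any object $\mathcal{E} \in \Sing(X\times_S X)$, the object $d\delta_X^*(\mathcal{E})$ is, after forgetting to $\Sing$, set-theoretically supported on $Z \subseteq X \simeq (X\times_P X)_{\mathrm{red}}$. The key observation is that the singularity category $\Sing(X\times_S X)$ is \emph{already} supported on the singular locus of $X/S$ in the following sense: since $p: X\to S$ is generically smooth and $X$ is regular, $X\times_S X$ is a regular scheme away from the locus where $X/S$ is singular, and more precisely $\Sing(X\times_S X)$ is Karoubi-generated by coherent sheaves set-theoretically supported on $Z\times_S Z \subseteq X\times_S X$ (indeed, on $Z\times_s Z$, since the singular locus lives in the special fiber). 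This is the analogue of the fact, used at the end of the proof of Lemma \ref{lem: wt phi comp diag = id}, that $\Sing(X\times_S X)_{X_s\times_s X_s}\simeq \Sing(X\times_S X)$, refined to the singular locus.

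First I would make precise that claim: if $U = (X\times_S X)\setminus(Z\times_S Z)$, then $U$ is regular, hence $\Sing(U) = 0$, and the localization sequence $\Sing\big((X\times_S X)_{Z\times_S Z}\big) \to \Sing(X\times_S X)\to \Sing(U)$ shows that every object of $\Sing(X\times_S X)$ is, up to the quotient by $\Perf$, represented by a coherent sheaf supported on $Z\times_S Z$. To see that $U$ is regular: a point of $X\times_S X$ lying outside $Z\times_S Z$ has at least one of its two projections landing in the smooth locus of $X/S$; near such a point $X\times_S X$ is, étale-locally, a product of a smooth $S$-scheme with a regular $S$-scheme, which is regular (using that $X$, and a fortiori the smooth locus, is regular and flat over the regular base $S$, and smoothness is preserved under base change). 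Then, since $d\delta_X^*$ is exact and the diagram
\begin{equation}
\nonumber
\begin{tikzpicture}[scale=1.5]
\node (00) at (0,0) {$K(X,L,0)$};
\node (10) at (2.5,0) {$X\times_S X$};
\node (01) at (0,1) {$d\delta_X^{-1}(Z\times_S Z)$};
\node (11) at (2.5,1) {$Z\times_S Z$};
\path[->,font=\scriptsize,>=angle 90]
(00.east) edge node[above] {$d\delta_X$} (10.west);
\path[->,font=\scriptsize,>=angle 90]
(01.east) edge node[above] {} (11.west);
\path[right hook->,font=\scriptsize,>=angle 90]
(01.south) edge node[right] {} (00.north);
\path[right hook->,font=\scriptsize,>=angle 90]
(11.south) edge node[right] {} (10.north);
\end{tikzpicture}
\end{equation}
commutes, $d\delta_X^*$ carries sheaves supported on $Z\times_S Z$ to sheaves supported on the preimage $d\delta_X^{-1}(Z\times_S Z)$. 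It remains to identify this preimage, as a subset of $X \simeq (X\times_P X)_{\mathrm{red}} = K(X,L,0)_{\mathrm{red}}$, with $Z$: a point $x\in X$ gives the point $(x,x)\in X\times_S X$, which lies in $Z\times_S Z$ if and only if $x\in Z$. Hence $d\delta_X^{-1}(Z\times_S Z) = Z$ as a closed subset, so $d\delta_X^*(\mathcal{E})$ is supported on $Z$, i.e.\ lands in $\Sing\big(K(X,L,0)\big)_Z$.

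The main obstacle I anticipate is the careful verification of the regularity of $U = (X\times_S X)\setminus(Z\times_S Z)$, and relatedly the precise statement that $\Sing(X\times_S X)$ is generated by sheaves supported on $Z\times_S Z$: one must be a little careful because $X\times_S X$ is a fiber product over $S$, not over a field, so one cannot directly invoke "product of regular is regular"; instead one argues fppf- or étale-locally, writing $X$ near a smooth point of $p$ as étale over $\bbA^m_S$ so that the relevant open of $X\times_S X$ becomes étale over $\bbA^m_S \times_S X \cong \bbA^m_X$, which is smooth over $X$ and hence regular since $X$ is. A cleaner alternative, avoiding this local analysis, is to deduce the support statement for $\Sing(X\times_S X)$ from the equivalence $\ffF:\T^\op\otimes_\B\T \xrightarrow{\sim}\Sing(X\times_S X)$ of Theorem \ref{thm:TV} together with the fact that $\T = \Sing(X_s)$ is supported on $Z$ (as $X_s$ is regular away from $Z$), so that the image of $\ffF$ is generated by objects supported on $Z\times_s Z$; this reduces the whole lemma to the trivial set-theoretic identity $d\delta_X^{-1}(Z\times_S Z)=Z$ and the exactness of $d\delta_X^*$.
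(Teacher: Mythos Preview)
Your argument is correct and rests on the same core idea as the paper's proof---namely that $X\times_S X$ is regular away from the image of $Z$---but the paper organizes it more directly and avoids the difficulty you anticipate. Rather than first establishing a support statement for $\Sing(X\times_S X)$ and then pulling back, the paper shows immediately that the composition $j^*\circ d\delta_X^*$ (with $j: U = X\setminus Z \hookrightarrow X$) lands in $\Perf\bigt{K(U,\restr L U,0)}$, by factoring it through the commutative square
\[
\begin{tikzcd}
\Coh(X\times_S X) \ar[r,"d\delta_X^*"] \ar[d,"(j\times_S j)^*"'] & \Coh\bigt{K(X,L,0)} \ar[d,"j^*"] \\
\Coh(U\times_S U) \ar[r,"d\delta_U^*"] & \Coh\bigt{K(U,\restr L U,0)}
\end{tikzcd}
\]
and observing that $U\times_S U$ is regular simply because $U/S$ is \emph{smooth}, so $U\times_S U$ is smooth over the regular scheme $U$. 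This uses the smaller open $U\times_S U$ instead of your $(X\times_S X)\setminus(Z\times_S Z)$, so the \'etale-local verification you worry about never arises. Your alternative route via Theorem~\ref{thm:TV} also works but invokes a much deeper result than the lemma warrants.
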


\begin{proof}
Recall that $\Sing(K(X,L,0))_Z$ is defined as the kernel of the restriction functor
\begin{equation*}
\Sing \bigt{K(X,L,0)}
\rightarrow \Sing \bigt{K(U,\restr L 
 U,0)},
\end{equation*}
with $j: U \hto X$ the open complement of $Z \subseteq X$. 
Thus, we have to show that the composition
\begin{equation*}
    \Sing(X\times_S X)
    \xto
    {d \delta_X^*} 
\Sing \bigt{K(X,L,0)}
\xto{j^*}
\Sing \bigt{K(U,\restr L U,0)}
\end{equation*}
is identically zero. We will show, equivalently, that the composition
\begin{equation*}
    \Coh (X\times_S X)
    \xto
    {d \delta_X^*} 
\Coh \bigt{K(X,L,0)} 
\xto{j^*}
\Coh \bigt{K(U,\restr L U,0)}
\end{equation*}
factors through 
$$
\Perf \bigt{K(U,\restr L U,0)}
\subseteq 
\Coh \bigt{K(U,\restr L U,0)} .
$$
The following square is clearly commutative:
\begin{equation*}
    \begindc{\commdiag}[15]
      \obj(-50,15)[1]{$\Coh(X\times_SX)$}
      \obj(40,15)[2]{$\Coh \bigt{K( X,L,0)}$}
      \obj(-50,-15)[3]{$\Coh(U\times_SU)$}
      \obj(40,-15)[4]{$\Coh \bigt{K( U, \restr L U,0)}.$}
      \mor{1}{2}{$d\delta_X^*$}
      \mor{1}{3}{$(j\times_S j)^*$}
      \mor{2}{4}{$j^*$}
      \mor{3}{4}{$ d\delta_U^*$}
    \enddc
\end{equation*}
To conclude, observe that $U\times_SU$ is a regular scheme (since $U \to S$ is smooth), hence 
$$
\Coh(U\times_S U) 
\simeq 
\Perf(U\times_SU)
$$
and the lemma follows from the fact that $d\delta_U^*$ preserves perfect complexes.
\end{proof}

\begin{defn}
We define 
\begin{equation*}
    (-,\Delta_X):\Sing(X\times_S X)
    \rightarrow 
    \Sing \bigt{K(X,L,0)}_Z 
\end{equation*}
to be the dg-functor obtained from the above lemma. We will refer to it as the \emph{intersection with the diagonal}.
\end{defn}

\ssec{Motivic realization of dg-categories}

\sssec{}

Let $\SH_{S}$ denote the stable homotopy category of schemes introduced by Morel and Voevodsky in \cite{mv99} (see \cite{ro15} for an $\oo$-categorical version of the construction). 
It is a stable and presentable symmetric monoidal $\oo$-category. 

\sssec{}

For $Y$ a quasi-compact quasi-separated $S$-scheme, denote by $\BU_Y\in \SH_Y$ the spectrum of non-connective homotopy-invariant algebraic K-theory.

\sssec{}

Recall that in \cite{brtv18} the \emph{motivic realization of dg-categories} was defined. This is a lax-monoidal functor
$$
  \Mv_S:\dgCat_A \rightarrow \Mod_{\BU_S}(\SH_{S})
$$
that satisfies the following properties.
\begin{enumerate}

\item 
If $q:Y\rightarrow S$ is a quasi-compact, quasi-separated $S$-scheme, then $\Mv_S\bigt{\Perf(Y)}\simeq q_*\BU_Y$.

\item 
In particular, $\Mv_S\bigt{\Perf(S)}\simeq \BU_S$.

\item 
$\Mv_S$ preserves filtered colimits.

\item 
$\Mv_S$ sends exact sequences of dg-categories to exact triangles.

\end{enumerate}

\sssec{}

By tensorization with $\HQ$ (the spectrum of rational singular cohomology), we obtain a similar $\oo$-functor
\begin{equation*}
    \Mv_{\bbQ, S}:\dgCat_A \rightarrow \Mod_{\BU_{S,\bbQ}}(\SH_{S}),
\end{equation*}
where $\BU_{S,\bbQ}=\BU_S\otimes \HQ$ is the spectrum of non-connective homotopy-invariant \emph{rational} K-theory.

\ssec{The integration map}

We wish to \virg{extract} Bloch intersection number from our dg-functor $(-,\Delta_X)$. To this end, we need to study its motivic realization. Specifically, we explicitly compute the motive of $\MF(X,L,0)_Z$, as well as that of $\MF(S,0)_s$. These computations will reveal the existence of a natural map
$$
\Mv_{\bbQ,S}\bigt{\MF(X,L,0)_Z}
\to 
\Mv_{\bbQ,S}\bigt{\MF(S,0)_s}.
$$
Combining this map with the motivic realization of $(-,\Delta_X)$, we obtain our \emph{integration map}, which recovers the localized intersection product of Kato--Saito (and, in particular, Bloch intersection number).

\begin{prop}\label{prop: MF(X,C,0)_Z decomposes as a direct sum in motives}

Let $i:Z\hookrightarrow X$ denote the singular locus of $X/S$.
There is an equivalence
\begin{equation} \label{eqn:motive of MF(X,L,0)Z}
    \Mv_{\bbQ,S}\bigt{\MF(X,L,0)_Z}
    \simeq 
    p_*(i_*i^!\BU_{\bbQ,X}\oplus i_*i^!\BU_{\bbQ,X}[1])
\end{equation}
of $\BU_{\bbQ,S}$-modules.
Moreover, 
$$
\HK_0^{\bbQ}\bigt{\MF(X,L,0)_Z}\simeq \HK_0^{\bbQ}\bigt{\Coh(Z)}\simeq \uG_0(Z)\otimes \bbQ.
$$
\end{prop}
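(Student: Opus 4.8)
The plan is to combine the version of Orlov's theorem recalled in \ref{sssec:Orlov} with the defining Verdier localisation sequence for singularity categories and the properties of $\Mv_{\bbQ,S}$. Set $K:=K(X,L,0)$; this is a quasi-smooth derived $X$-scheme whose classical truncation is $X$, and which restricts to $K(U,\restr L U,0)$ over the (regular) open subscheme $U:=X\setminus Z$. Via the equivalence $\Xi$ we may replace $\MF(X,L,0)_Z$ by $\Sing(K)_Z$, which by definition equals $\Coh(K)_Z/\Perf(K)_Z$; since $\Mv_{\bbQ,S}$ carries exact sequences of dg-categories to exact triangles, we obtain a cofibre sequence
\[
\Mv_{\bbQ,S}\bigt{\Perf(K)_Z}\xto{\;\alpha\;}\Mv_{\bbQ,S}\bigt{\Coh(K)_Z}\to \Mv_{\bbQ,S}\bigt{\Sing(K)_Z}.
\]
It thus remains to identify the two source terms and to understand $\alpha$.

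For the source terms I would use that $\Mv_{\bbQ,S}$, being an $\bbA^1$-invariant localising invariant, is insensitive to derived thickenings: for the $\Perf$-term this is the truncating invariance of homotopy $K$-theory, and for the $\Coh$-term it follows from the theorem of the heart, the $G$-theory of $K$ depending only on the abelian category of coherent sheaves on $K_{\mathrm{cl}}=X$. Hence $\Mv_{\bbQ,S}(\Coh(K)_Z)\simeq \Mv_{\bbQ,S}(\Coh(X)_Z)$ and $\Mv_{\bbQ,S}(\Perf(K)_Z)\simeq \Mv_{\bbQ,S}(\Perf(X)_Z)$. As $X$ is regular these two coincide, and applying $\Mv_{\bbQ,S}$ to $\Perf(X)_Z\to\Perf(X)\to\Perf(U)$, together with the formula $\Mv_{\bbQ,S}(\Perf(Y))\simeq q_*\BU_{\bbQ,Y}$ and the localisation triangle $i_*i^!\to\id\to j_*j^*$ in $\SH_X$ (with $j:U\hto X$), gives
\[
\Mv_{\bbQ,S}\bigt{\Coh(K)_Z}\simeq \Mv_{\bbQ,S}\bigt{\Perf(K)_Z}\simeq p_*\,i_*i^!\BU_{\bbQ,X}.
\]

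The heart of the argument, and the step I expect to be the main obstacle, is to prove that $\alpha$ is null-homotopic. Here one must genuinely use that $X=\on{V}(s)$, so that $K$ is the derived self-intersection of the \emph{zero} section of $\restr L X$: concretely, the discrepancy between $\Perf(K)$ and $\Coh(K)$ is measured by the structure sheaf of $K_{\mathrm{cl}}$, whose two-term Koszul resolution over $K$ has differential the tautological section of $\restr L X$ pulled back along the zero section — which vanishes identically. The subtlety is to upgrade this from a statement about classes (vanishing on $\pi_0$) to an actual null-homotopy of $\alpha$ as a morphism of $\BU_{\bbQ,S}$-modules; the natural tool is the canonical $2$-periodicity of the matrix-factorisation category $\MF(X,L,0)$. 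Granting the null-homotopy, the cofibre of $\alpha$ splits as $\Mv_{\bbQ,S}(\Coh(K)_Z)\oplus \Mv_{\bbQ,S}(\Perf(K)_Z)[1]\simeq p_*\bigt{i_*i^!\BU_{\bbQ,X}\oplus i_*i^!\BU_{\bbQ,X}[1]}$, which is \eqref{eqn:motive of MF(X,L,0)Z}.

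For the last assertion, apply $\HK_*^{\bbQ}$ to the localisation sequence $\Perf(K)_Z\to\Coh(K)_Z\to\Sing(K)_Z$. Using that $\alpha$ vanishes and that $\HK_{-1}^{\bbQ}(\Perf(K)_Z)=0$ — indeed, since $X$ is regular, dévissage identifies $\HK_*^{\bbQ}(\Perf(K)_Z)\simeq\HK_*^{\bbQ}(\Perf(X)_Z)$ with the rational $G$-theory of $Z$, which is connective — one obtains $\HK_0^{\bbQ}(\Sing(K)_Z)\simeq \HK_0^{\bbQ}(\Coh(K)_Z)$. The same dévissage gives $\HK_0^{\bbQ}(\Coh(K)_Z)\simeq \HK_0^{\bbQ}(\Coh(X)_Z)\simeq \uG_0(Z)\otimes\bbQ$, and finally $\HK_0^{\bbQ}(\Coh(Z))\simeq \uG_0(Z)\otimes\bbQ$ because the $G$-theory of the noetherian scheme $Z$ is $\bbA^1$-invariant, so that $\HK$ agrees with $K$-theory on $\Coh(Z)$. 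This establishes both displayed identities.
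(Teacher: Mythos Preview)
Your overall strategy---using the defining sequence $\Perf(K)_Z\to\Coh(K)_Z\to\Sing(K)_Z$ and identifying both source terms with $p_*i_*i^!\BU_{\bbQ,X}$---is reasonable and not far from the paper's, but the heart of the argument, the null-homotopy of $\alpha$, has a genuine gap and your proposed justification is incorrect.

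Under your two identifications (nil-invariance for $\Perf$, d\'evissage/heart for $\Coh$), the map $\alpha$ is \emph{not} zero; it is multiplication by $1-[L^\vee]$. This is precisely the content of the formula
\[
\Mv_{\bbQ,S}\bigt{\MF(X,L,0)}\;\simeq\;p_*\,\coFib\bigl(\BU_{\bbQ,X}\xrightarrow{\ \id-L^\vee\ }\BU_{\bbQ,X}\bigr)
\]
from \cite[\S3]{pi22}, which the paper invokes as a black box. Your ``two-term Koszul resolution with zero differential'' does not show what you want: the two-term complex $[\pi^*L^\vee\xrightarrow{0}\CO_K]$ is a perfect $\CO_K$-module and is \emph{not} a resolution of $\iota_*\CO_X$ (which is not perfect over $K$; its actual free resolution is the infinite one with nonzero differentials). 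The vanishing of the section is a red herring---indeed, if your argument worked it would give $\Mv_{\bbQ,S}\bigt{\MF(X,L,0)}\simeq p_*(\BU_{\bbQ,X}\oplus\BU_{\bbQ,X}[1])$ over all of $X$, which is false whenever $L$ is nontrivial.

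What actually makes $\alpha$ null \emph{after restriction to $Z$} is a geometric fact about the singular locus of a hypersurface: by \cite[Lemma~5.1.3]{ks04}, the restriction $\restr{L^\vee}{Z}$ is equivalent to $\CO_Z$ in $G$-theory, so $1-[\restr{L^\vee}{Z}]=0$ and the induced endomorphism of $i_*i^!\BU_{\bbQ,X}$ is null. The paper's proof takes the open--closed localisation $\MF(X,L,0)_Z\to\MF(X,L,0)\to\MF(U,\restr L U,0)$, applies the \cite{pi22} formula on $X$ and on $U$, and then uses the Kato--Saito lemma to split the resulting cofibre. These two inputs---\cite{pi22} to identify $\alpha$ and \cite{ks04} to kill it over $Z$---are exactly what your sketch is missing.
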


\begin{proof}
Since $\Mv_{\bbQ,S}$ sends localization sequences to exact triangles, $\Mv_{\bbQ,S}\bigt{\MF(X,L,0)_Z}$ is equivalent to the fiber of
\begin{equation*}
    \Mv_{\bbQ,S}\bigt{\MF(X,L,0)}\rightarrow \Mv_{\bbQ,S}\MF \bigt{U,\restr L U,0)}.
\end{equation*}
In view of \cite[\S3]{pi22}, we know that
\begin{equation*}
    \Mv_{\bbQ,S}\bigt{\MF(X,L,0)}
    \simeq 
    p_*\coFib(\BU_{\bbQ,X}
    \xto
    {\id-L^\vee}
    \BU_{\bbQ,X}),
\end{equation*}
\begin{equation*}
    \Mv_{\bbQ,S}\bigt{\MF(U,L,0)}
    \simeq 
    (\restr p U)_*\coFib(\BU_{\bbQ,U}
    \xto
    {\id-\restr {L^\vee} U}
    \BU_{\bbQ,U}).
\end{equation*}
By \cite[Lemma 5.1.3]{ks04}, the restriction $\restr{L^\vee} Z$ is equivalent to $\CO_Z$ in $\uG$-theory, so we obtain the claim by combining the above equivalences.
The last formula of the proposition is an easy consequence of the first one.
\end{proof}

\sssec{}

Let $i_S:s\to S$ be the embedding of the special point of $S$.
The proof above shows that
\begin{equation} \label{eqn:motive of MF(S,0)s}
\Mv_{\bbQ,S}\bigt{\MF(S,0)_s}
\simeq
(i_{S})_*\circ i_S^!(\BU_{\bbQ,S})\oplus (i_S)_* \circ i_S^!(\BU_{\bbQ,S})[1]   
\end{equation}
and
$$
\HK_0^{\bbQ}\bigt{\MF(S,0)_s}
\simeq 
\HK_0^{\bbQ} \bigt{\Coh(s)}
\simeq 
\bbQ.
$$

\sssec{}

Thanks to the regularity of $X$, we have a canonical isomorphism $\BU_{\bbQ,X}\simeq p^!\BU_{\bbQ,S}$, see \cite[Lemma 3.3.2]{tv22}. 
Consider now the natural arrow
\begin{equation} \label{eqn:natural arrow motives}
    p_* i_*  i^!(\BU_{\bbQ,X})
    \simeq 
   (i_S)_*  p_{Z!}p_Z^!i_S^!(\BU_{\bbQ,S})
    \to 
    (i_S)_* i_S^!(\BU_{\bbQ,S}),
\end{equation}
obtained by adjunction, where $p_Z$ denotes the morphism $Z\to s$.
Notice that $i_S \circ p_Z = p\circ i$.
Combining \eqref{eqn:natural arrow motives} with the equivalences \eqref{eqn:motive of MF(X,L,0)Z} and \eqref{eqn:motive of MF(S,0)s}, we obtain a natural arrow
\begin{equation} \label{eqn:deg-for-motives}
\wt{\deg}: \Mv_{\bbQ,S}\bigt{\MF(X,L,0)_Z}
    \to 
     \Mv_{\bbQ,S}\bigt{\MF(S,0)_s}
\end{equation}
of $\BU_{\bbQ,S}$-modules. 
In $\uG$-theory, this maps corresponds to the degree map 
$$
\deg= (p_Z)_*: \uG_0(Z) \otimes \bbQ \to \uG_0(s) \otimes \bbQ,
$$ 
which is well-defined since $Z/s$ is proper.

\sssec{}

We define the \emph{integration map in $\BU_{\bbQ,S}$-modules}
\begin{equation*}
\int_{X/S}^{\Mv_{\bbQ,S}}:
\Mv_{\bbQ,S}\bigt{\Sing(X\times_S X)}
\rightarrow 
\Mv_{\bbQ,S}\bigt{\MF(S,0)_s}
\end{equation*}
to be the composition
$$
\Mv_{\bbQ,S}\bigt{\Sing(X\times_S X)}
\xto
{
\Mv_{\bbQ,S}(-,\Delta_X)
}
\Mv_{\bbQ,S}\bigt{\MF(X,L,0)_Z}
\xto{
\wt{\deg}
}
\Mv_{\bbQ,S}\bigt{\MF(S,0)_s}.
$$
We then define the integration map to be the map induced by $\int_{X/S}^{\Mv_{\bbQ,S}}$ in $\HK$-theory:
\begin{equation*}
\int_{X/S}:
\HK^{\bbQ}_0
\bigt{\Sing(X\times_S X)}
\xto
{
\HK^{\bbQ}_0 (-,\Delta_X)
}
\HK^{\bbQ}_0\bigt{\MF(X,L,0)_Z}
\simeq 
\uG_0^{\bbQ}(Z)
\xto{
{\deg}
}
\uG_0^{\bbQ}(s) = \bbQ.
\end{equation*}

\sssec{}

The canonical dg-functor $\Coh \bigt{X \times_S X} \to \Sing \bigt{X \times_S X}$ induces a canonical map 
$$
\uG_0^{\bbQ}(X \times_S X) \to \HK^{\bbQ}_0
\bigt{\Sing(X\times_S X)}. 
$$
Hence, we can apply $\int_{X/S}$ to classes in $\uG_0^{\bbQ}(X \times_S X)$. The following is the main result of this section.

\begin{thm}\label{thm: KS loc int prod = HK_0 int with the diagonal}
Our integration map agrees with the localized intersection product of Kato--Saito: for $[E]\in G_0^{\bbQ}(X \times_S X)$, we have
$$
\int_{X/S}[E]=[[\Delta_X,E]]_S.
$$
\end{thm}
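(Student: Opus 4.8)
\medskip

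\noindent\emph{Sketch of proof.}
The plan is to reduce the identity to $\uG$-theory of the singular locus and then push forward to the point. By $\bbQ$-linearity it suffices to treat the class $[E]$ of a single coherent sheaf $E\in\Coh(X\times_S X)$. Unravelling the definition of $\int_{X/S}$ together with the identification $\HK_0^{\bbQ}\big(\MF(X,L,0)_Z\big)\simeq\uG_0^{\bbQ}(Z)$ furnished by Proposition \ref{prop: MF(X,C,0)_Z decomposes as a direct sum in motives}, the point is to show that the class of $d\delta_X^*E$ in $\Sing\big(K(X,L,0)\big)_Z$ corresponds, under that identification, to
\begin{equation*}
(-1)^n\big[\ul\Tor_n^{X\times_S X}(\Delta_X,E)\big]+(-1)^{n+1}\big[\ul\Tor_{n+1}^{X\times_S X}(\Delta_X,E)\big]\in\uG_0^{\bbQ}(Z),\qquad n\gg0,
\end{equation*}
which by Theorem \ref{thm:KS-interpretation of Bl} and \eqref{defn:KS pairing} is precisely $[[\Delta_X,E]]_S$ before applying $\deg=(p_Z)_*$.

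The first step is to analyse $d\delta_X^*E$. Since $d\delta_X$ is a quasi-smooth closed immersion---being the base change of the diagonal $\delta_P$ of the smooth $S$-scheme $P$---the object $(d\delta_X)_*\O_{K(X,L,0)}$ is perfect over $\O_{X\times_S X}$, so $d\delta_X^*E$ is a \emph{bounded} coherent complex on $K(X,L,0)$ with $\mathcal{H}^{-j}(d\delta_X^*E)\simeq\ul\Tor_j^{X\times_S X}\big((d\delta_X)_*\O_{K(X,L,0)},E\big)$. Using the factorisation $\delta_X=d\delta_X\circ\iota$ of \eqref{eqn:factoriz of diagonal} and the Koszul description of $(d\delta_X)_*\O_{K(X,L,0)}$---which fits in an exact triangle $\delta_{X*}(\restr{L^\vee}{X})[1]\to(d\delta_X)_*\O_{K(X,L,0)}\to\Delta_X$ on $X\times_S X$---and tensoring with $E$, one obtains a long exact sequence relating these Tor-sheaves to $\ul\Tor_\bullet^{X\times_S X}(\Delta_X,E)$ and to their twist by $\restr{L^\vee}{X}$. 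Restricting to $Z$, outside of which our object is perfect by Lemma \ref{lem: pullback along derived pullback factors through MF with support on the singular locus}, and invoking \cite[Lemma 5.1.3]{ks04}, which identifies $\restr{L^\vee}{Z}$ with $\O_Z$ in $\uG$-theory, makes the $\restr{L^\vee}{X}$-twist trivial; this is exactly the input producing the periodicity $[\ul\Tor_j^{X\times_S X}(\Delta_X,E)]=[\ul\Tor_{j+2}^{X\times_S X}(\Delta_X,E)]$ in $\uG_0^{\bbQ}(Z)$ that underlies the stabilisation in Theorem \ref{thm:KS-interpretation of Bl}.

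The heart of the matter is to express the class of a coherent sheaf inside $\HK_0^{\bbQ}\big(\Sing(K(X,L,0))_Z\big)$ through the $\uG$-theory of $Z$. Here I would pass to twisted matrix factorisations via Orlov's equivalence $\Xi$ of \S\ref{sssec:Orlov}: the image $\Xi(d\delta_X^*E)$ is the eventual two-periodic tail of the $\O_{K(X,L,0)}$-free resolution of $d\delta_X^*E$, and---using $\delta_X=d\delta_X\circ\iota$ and hence $\iota^*d\delta_X^*E=\delta_X^*E=\O_X\otimes^{\bbL}_{\O_{X\times_S X}}E$---its cohomology is expressed, up to shift and $\restr{L^\vee}{X}$-twists, through the $\ul\Tor_j^{X\times_S X}(\Delta_X,E)$ of the first step. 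Combined with the explicit splitting of Proposition \ref{prop: MF(X,C,0)_Z decomposes as a direct sum in motives}, which presents $\HK_0^{\bbQ}$ of $\MF(X,L,0)_Z$ as $\uG_0^{\bbQ}(Z)$ via the $\bbZ/2$-graded Euler characteristic of cohomology, this yields the stabilised two-term alternating sum above for $n\gg0$: passing to the singularity quotient discards all but two consecutive of the (infinitely many, but eventually $Z$-supported) Tor-sheaves, and the resulting expression is independent of $n\gg0$ by the periodicity just established. I expect this to be the main obstacle. It is where the hypersurface presentation of $X$ is genuinely used---essentially Eisenbud's eventual two-periodicity of the free resolution over a hypersurface (matrix factorisation) ring, applied to the resolution of the diagonal---and where one must be careful that the naive alternating sum of the cohomology sheaves of the \emph{bounded} complex $d\delta_X^*E$ (which involves $\ul\Tor_\bullet$ against $(d\delta_X)_*\O_{K(X,L,0)}$, not against $\Delta_X$, and vanishes for $j\gg0$) computes a different, unhelpful invariant; only after descending to the singularity quotient and applying $\restr{L^\vee}{Z}\simeq\O_Z$ does the Kato--Saito class emerge.

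Finally, applying $\deg=(p_Z)_*\colon\uG_0^{\bbQ}(Z)\to\uG_0^{\bbQ}(s)=\bbQ$ to both sides and comparing with the definition \eqref{defn:KS pairing} gives $\int_{X/S}[E]=[[\Delta_X,E]]_S$; specialising to $E=\Delta_X$ and using the Kato--Saito identity $[[\Delta_X,\Delta_X]]_S=\Bl(X/S)$ recovers Bloch's intersection number, as asserted.
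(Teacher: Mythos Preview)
Your proposal is correct and follows essentially the same route as the paper: reduce to the identity in $\uG_0^{\bbQ}(Z)$ before taking the degree, use the factorisation $\delta_X = d\delta_X\circ\iota$, and identify the cohomology of the matrix factorisation $\Xi(d\delta_X^*E)$ with the stable Tor-sheaves $\ul\Tor_n^{X\times_SX}(\Delta_X,E)$.

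The paper's execution is a bit more direct than yours. Rather than first extracting the two-term triangle $\delta_{X*}(\restr{L^\vee}{X})[1]\to(d\delta_X)_*\O_{K(X,L,0)}\to\Delta_X$ to establish periodicity and then separately analysing the two-periodic tail, the paper writes down in one stroke the full free resolution of $\iota_*\O_X$ on $K(X,L,0)$,
\[
\iota_*\O_X \;\simeq\; \bigl[\,\cdots \xto{\epsilon} \psi^*(L^\vee[1])^{\otimes 2}\xto{\epsilon}\psi^*L^\vee[1]\xto{\epsilon}\O_{K(X,L,0)}\,\bigr],
\]
tensors it with $d\delta_X^*E$, and observes that the resulting totalisation simultaneously (i) computes $\delta_{X*}\delta_X^*E$, whose cohomology is $\ul\Tor_\bullet^{X\times_SX}(\Delta_X,E)$, and (ii) is eventually two-periodic with stable part literally equal to $\Xi(d\delta_X^*E)$ by the formula of \S\ref{sssec:Orlov}. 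This bypasses the long exact sequence and the separate invocation of $\restr{L^\vee}{Z}\simeq\O_Z$, and makes the identification $\CH^i\bigl(\Xi(d\delta_X^*E)\bigr)\simeq\ul\Tor_{2n+i}^{X\times_SX}(\Delta_X,E)$ (for $n\gg0$) immediate rather than something to be argued ``up to shift and twist''. Your two-term triangle is just the first stage of this resolution; unfolding it all the way is what removes the vagueness you yourself flag in the ``heart of the matter''.
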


The proof depends on the following more refined result, which shows that our construction agrees with that of Kato--Saito \emph{before} taking the degree.

\begin{prop}\label{prop: KS loc int prod = HK_0 int with the diagonal}
Let
\begin{equation*}
    [\Delta_X,-]: \uG_0(X\times_SX)\otimes \bbQ \to \HK^{\bbQ}_0\bigt{\Sing(X\times_S X)} \rightarrow \HK^{\bbQ}_0\bigt{\MF(X,L,0)_Z}
\end{equation*}   
denote the morphism induced by $\Mv_{\bbQ,S}(-,\Delta_X)$. Then
\begin{equation*}
    [\Delta_X,-]=[[X,-]]_{X\times_S X},  
\end{equation*}
where the right-hand side was defined by Kato--Saito in \cite[Definition 5.1.5]{ks04}.
\end{prop}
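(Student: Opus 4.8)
The plan is to reduce both sides of the claimed identity to the same alternating sum of stable Tor-sheaves on $K(X,L,0)$, and then to invoke Kato--Saito's definition verbatim. Unwinding the constructions: since the functor $(-,\Delta_X)$ is the derived pullback $d\delta_X^*$ (composed with the Orlov equivalence $\Xi$ of Section~\ref{sssec:Orlov}), for $E\in\Coh(X\times_S X)$ the element $[\Delta_X,E]$ is the image of the class $[d\delta_X^* E]$ under
$$
\HK^{\bbQ}_0\bigt{\Sing(X\times_S X)}
\xto{\HK^{\bbQ}_0(d\delta_X^*)}
\HK^{\bbQ}_0\bigt{\Sing(K(X,L,0))_Z}
\;\simeq\;
\uG_0(Z)\otimes\bbQ,
$$
the last equivalence being the one produced in Proposition~\ref{prop: MF(X,C,0)_Z decomposes as a direct sum in motives} (after identifying $\Sing(K(X,L,0))_Z\simeq\MF(X,L,0)_Z$). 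So the first task is to describe this equivalence by hand: I claim it sends the class of a coherent complex $F$ on $K(X,L,0)$ whose image in $\Sing$ is supported on $Z$ to
$$
(-1)^n\big[\ul\Tor^{K(X,L,0)}_n(\iota_*\O_X,F)\big]
+(-1)^{n+1}\big[\ul\Tor^{K(X,L,0)}_{n+1}(\iota_*\O_X,F)\big]
\in\uG_0(Z)\otimes\bbQ
\qquad (n\gg0),
$$
which stabilizes by the same argument as in the Kato--Saito theorem recalled above (with $X\times_S X$, $\Delta_X$ replaced by $K(X,L,0)$, $\iota_*\O_X$).

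To prove this claim, recall that $\Xi^{+}$ folds a coherent complex $(M,d,h)$ on $K(X,L,0)$ --- where $h$ encodes the action of the square-zero ideal, since locally $\O_{K(X,L,0)}$ is the trivial square-zero extension of $\O_X$ by $L^{\vee}|_X[1]$ --- into the $2$-periodic object $\bigl(\bigoplus_i M^{2i+\bullet}\otimes L^{\otimes i},\,d+h\bigr)$. Applied to $F=d\delta_X^* E$, the cohomology sheaves of $\Xi^{+}(F)$ compute the stable Tor-sheaves $\ul\Tor^{K(X,L,0)}_n(\iota_*\O_X,F)$ for $n\gg0$: indeed $\iota_*\O_X=\O_{K(X,L,0)}/\mathcal{I}$ admits, locally, an eventually $2$-periodic free resolution over $\O_{K(X,L,0)}$ (the Koszul resolution of the square-zero ideal), and tensoring its periodic tail with $F$ reproduces $\Xi^{+}(F)$ up to homotopy; since $K(X,L,0)$ is a complete intersection over $X$, these ordinary Tor-sheaves coincide with Tate Tor in all large degrees, which is what makes the stabilization work. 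The twists by $L^{\otimes i}$ occurring in the periodicity become trivial in $\uG$-theory once restricted to $Z$ by \cite[Lemma~5.1.3]{ks04}, and this is precisely the input that the construction of the equivalence in Proposition~\ref{prop: MF(X,C,0)_Z decomposes as a direct sum in motives} relies on; tracing that construction shows it is exactly the ``Euler characteristic of the stabilization'' map described above.

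It remains to compare the two resulting expressions. Using the factorization $\delta_X=d\delta_X\circ\iota$ with $d\delta_X$ a (quasi-smooth) closed immersion, the identity $\Delta_X=(d\delta_X)_*(\iota_*\O_X)$ and the projection formula give a natural equivalence
$$
\Delta_X\otimes^{\bbL}_{\O_{X\times_S X}}E
\;\simeq\;
(d\delta_X)_*\bigt{\iota_*\O_X\otimes^{\bbL}_{\O_{K(X,L,0)}}d\delta_X^* E}.
$$
Since $(d\delta_X)_*$ is $t$-exact, fully faithful, and restricts to the identity on $Z$, taking cohomology sheaves yields $\ul\Tor^{X\times_S X}_n(\Delta_X,E)\simeq\ul\Tor^{K(X,L,0)}_n(\iota_*\O_X,d\delta_X^* E)$ for every $n$; in particular, for $n\gg0$ both are supported on $Z$ with the same class in $\uG_0(Z)$. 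Combining this with the previous two paragraphs, $[\Delta_X,E]$ equals $(-1)^n\big[\ul\Tor^{X\times_S X}_n(\Delta_X,E)\big]+(-1)^{n+1}\big[\ul\Tor^{X\times_S X}_{n+1}(\Delta_X,E)\big]$ for $n\gg0$, which is exactly $[[X,E]]_{X\times_S X}$ in the sense of \cite[Definition~5.1.5]{ks04}. As $\Coh(X\times_S X)$ generates $\uG_0(X\times_S X)$ and both constructions are additive, this proves the proposition.

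The main obstacle is the middle step: making the abstract equivalence $\HK^{\bbQ}_0\bigt{\Sing(K(X,L,0))_Z}\simeq\uG_0(Z)\otimes\bbQ$ --- which emerges from the motivic/Bott-periodicity computation of Proposition~\ref{prop: MF(X,C,0)_Z decomposes as a direct sum in motives} --- genuinely coincide with the hands-on Euler-characteristic map, uniformly in the $L$-twists. Over a base of mixed or positive characteristic one must additionally be careful about the discrepancy between Koszul and divided-power/symmetric algebras when resolving $\iota_*\O_X$ over $\O_{K(X,L,0)}$ (cf.\ the remark in the acknowledgements). Once this bridge is secured, the rest is formal, since both $[\Delta_X,-]$ and $[[X,-]]_{X\times_S X}$ are then literally the same alternating sum of stable Tor-sheaves.
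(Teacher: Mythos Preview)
Your approach is correct and essentially coincides with the paper's: both use the factorization $\delta_X = d\delta_X \circ \iota$, the projection formula to rewrite $\Delta_X \otimes^{\bbL} E$ as $(d\delta_X)_*\bigt{\iota_*\O_X \otimes^{\bbL} d\delta_X^* E}$, the explicit resolution of $\iota_*\O_X$ over $\O_{K(X,L,0)}$, and the identification of the eventually $2$-periodic tail with $\Xi(d\delta_X^* E)$. The paper simply runs these steps in a different order --- it computes $\delta_{X*}\delta_X^* E$ directly and reads off the stable Tors as $\CH^0$ and $\CH^1$ of $\Xi(d\delta_X^* E)$ --- bypassing your intermediate abstraction of the equivalence $\HK_0^{\bbQ}\bigt{\MF(X,L,0)_Z} \simeq \uG_0^{\bbQ}(Z)$ as a stable-Tor map; but the content is the same.

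Two remarks on your stated obstacles. First, the worry about Koszul versus divided-power resolutions is a non-issue here: the paper writes down the resolution of $\iota_*\O_X$ explicitly as $[\dots \to \psi^*(L^{\vee}[1])^{\otimes 2} \to \psi^* L^{\vee}[1] \to \O_{K(X,L,0)}]$, which is unambiguous in rank one and does not involve any choice between symmetric and divided powers. Second, your concern that the motivic equivalence of Proposition~\ref{prop: MF(X,C,0)_Z decomposes as a direct sum in motives} might not match the hands-on Euler-characteristic map is legitimate in principle, but the paper sidesteps it by working directly with $\Xi$: once the cohomology sheaves of $\Xi(d\delta_X^* E)$ are identified with the stable Tors (which are supported on $Z$), the class in $\uG_0^{\bbQ}(Z)$ is by construction their alternating sum, and this is what the splitting in Proposition~\ref{prop: MF(X,C,0)_Z decomposes as a direct sum in motives} records.
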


\begin{proof}
 It is proven in \cite{ks04} that, for $n\gg 0$, there are equivalences 
$$
  \ul{\Tor}_n^{X\times_SX}(\Delta_X,E)
  \simeq
  \ul{\Tor}_{n-2}^{X\times_SX}(\Delta_X,E)\otimes L
$$
of coherent $\ccO_X$-modules supported on $Z$.
Accordingly, Kato--Saito defined the element 
$$
[[X,E]]_{X\times_SX}
\in \uK_0(X)_Z/\langle 1-[\restr L Z] \rangle
\simeq 
\uG_0(Z)
$$
as the difference
$$
[[X,E]]_{X\times_SX}
   := 
    [\ul \Tor^{X\times_SX}_{2n}(\Delta_X,E)]-
  [\ul \Tor^{X\times_SX}_{2n+1}(\Delta_X,E)] 
   =
[\CH^{-2n}(\delta_X^*E)]-
[\CH^{-2n-1}(\delta_X^*E)].
$$
We will express these Tor-sheaves in terms of the derived enhancement $d\delta_X$ of $\delta_X$.

\medskip

The factorization $\delta_X\simeq d\delta_X \circ \iota$, see \eqref{eqn:factoriz of diagonal}, yields
$$
\delta_{X*}\delta_X^*E 
\simeq
d\delta_{X*}(
d\delta_X^*
(E)
\otimes 
\iota _*\CO_X).
$$
Let $\psi: K(X,L,0) = X \times_P X \to P$ denote the obvious map.
The object $\iota _*\CO_X \in \Coh \bigt{K(X,L,0)}$ can be resolved as
$$
  \iota_*\CO_X
  \simeq
  \Bigl [
  \dots 
  \xto{\epsilon} 
  \psi^* (L^{\vee}[1])^{\otimes 2}
  \xto{\epsilon} 
  \psi^* L^{\vee}[1]
  \xto{\epsilon} 
 \CO_{ K(X,L ,0)}
  \Bigr ],
$$
so that $d\delta_X^*E
\otimes  \iota_*\CO_X$ is equivalent to the totalization of the cochain complex
$$
  \dots
  \xto{h}
  d\delta_X^*E\otimes (L^{\vee}[1])^{\otimes 2}
  \xto{h}
  d\delta_X^*E\otimes L^{\vee}[1]
  \xto{h}
  d\delta_X^*E.
$$
Next, notice that this totalization is \emph{eventually two-periodic}: for $n\gg 0$, the 3-term cochain complex
\begin{align*}
  \on{Tot} \Bigt{ \dots \xto{h} d\delta_X^*E\otimes L^{\vee} [1] \xto{h} d\delta_X^*E}_{-2n-1} & \to \on{Tot}\Bigt{ \dots \xto{h} d\delta_X^*E\otimes L^{\vee} [1] \xto{h} d\delta_X^*E }_{-2n}\\
   & \to \on{Tot} \Bigt{ \dots \xto{h} d\delta_X^*E\otimes L^{\vee} [1] \xto{h} d\delta_X^*E}_{-2n+1}
\end{align*}
  
identifies with
$$
  \bigoplus_{i\in \bbZ}(d\delta_X^*E)^{2i-1}\otimes L ^{\otimes -i}
  \xto{d+h}
  \bigoplus_{i\in \bbZ}(d\delta_X^*E)^{2i}\otimes L ^{\otimes -i}
  \xto{d+h}
  \bigoplus_{i\in \bbZ}(d\delta_X^*E)^{2i+1}\otimes L ^{\otimes -i}.
$$

\medskip

The resulting two-periodic complex is by definition the object
$$
  \Xi (d\delta^*_XE) \in \MF(X,L,0)_Z.
$$
In particular, we have that (for $n \gg 0$)
$$
  \ccH^{0}\bigt{\Xi(d\delta^*_XE)}
  \simeq
  \ccH^{-2n}(\delta_{X*}\delta_X^*(E))
  =:
  \ul{\Tor}_{2n}^{X\times_SX}(\Delta_X,E),
$$
$$
  \ccH^{1}\bigt{\Xi(d\delta^*_XE)}
  \simeq
  \ccH^{-2n-1}(\delta_{X*}\delta_X^*(E))
  =:
  \ul{\Tor}_{2n+1}^{X\times_SX}(\Delta_X,E).
$$
This immediately implies that 
$$
  [d\delta_X^*E]
  =
  [\ul \Tor^{X\times_SX}_{2n}(\Delta_X,E)]-
  [\ul \Tor^{X\times_SX}_{2n+1}(\Delta_X,E)]
  \in \HK_0^{\bbQ}\bigt{\MF(X,L,0)_Z}
  \simeq
  \uG_0^{\bbQ}(Z)
$$
as claimed.
\end{proof}

\sec{The case of unipotent monodromy}\label{sec:unip-monodromy}

Here we explain the second (and last) part of the proof of Theorem \ref{mainthm:hypersurface}. 
In Section \ref{ssec: cat dimtot}, we use the $\ell$-adic Chern character to define the categorical total dimension and to deduce a categorical version of the generalized Bloch conductor formula. Then, starting from Section \ref{ssec:duality for rl(T)}, we use the unipotence assumption to prove that the categorical version agrees with the classical version.

\ssec{The \texorpdfstring{$\ell$}{l}-adic realization of dg-categories and the categorical total dimension} \label{ssec: cat dimtot}

We now recall the $\ell$-adic realization of dg-categories (defined in \cite{brtv18}) and the $\ell$-adic Chern character (defined in \cite{tv22}). We use the former to define the \emph{categorical total dimension} and the latter to obtain a version of the generalized Bloch conductor conjecture.

\sssec{} 

Following \cite{brtv18}, consider the $\ell$-adic realization $\infty$-functor
\begin{equation}
    \Rl_S : \Mod_{\BU_{\bbQ,S}}(\SH_{S})\rightarrow \Mod_{\Ql{S}(\beta)}(\shvl(S))
\end{equation}
defined in \cite{cd16,ay14}
and consider the composition
\begin{equation}
    \rl_{S}:=\Rl_S\circ \Mv_{\bbQ,S}: \dgCat_A \rightarrow \Mod_{\Ql{S}(\beta)}(\shvl(S)).
\end{equation}
This is the \emph{$\ell$-adic realization of dg-categories}.
It is immediate that $\rl_{S}$ has similar properties to those of $\Mv_{S}$ (as both $-\otimes \HQ$ and $\Rl_{S}$ preserve them).

\sssec{}
As in \cite[Section 3.1]{tv22}, we will choose an isomorphism
$$
  \lim_{(n,p)=1}\mu_n(k)\simeq \widehat{\bbZ}'.
$$
Consequently, we get isomorphisms 
$\Qell \simeq \Qell(n)$, for all $n\in \bbZ$, which trivialize the Tate twists.

\sssec{}
As explained in \cite[\S 2.3]{tv22}, there exists a unique (up to a contractible space of choices) lax-monoidal natural transformation
\begin{equation*}
    \chern_{S}: \HK \rightarrow |\rl_{S}|,
\end{equation*}
called the \emph{non-commutative $\ell$-adic Chern character}. Here, $|-|$ denotes the Dold--Kan construction. We now use this Chern character to obtain an equality of $\ell$-adic rational numbers starting from the dg-functors introduced in Section \ref{sec:K-intersection-thry}.

\sssec{}

Consider the $A$-linear dg-functor
$$
\Perf(S) \to \Coh(X \times_S X) \to \Sing(X \times_S X)
$$ 
determined by $\Delta_X$ (viewed as an object of $\Sing(X \times_S X)$). By abuse of notation, we denote this dg-functor by $\Delta_X$. Now consider its image under ${\Mv_{\bbQ,S}}$ and compose it with the integration map
$$
\int_{X/S}
:
 \Mv_{\bbQ,S}\bigt{\Sing(X\times_S X)} 
\to 
\Mv_{\bbQ,S}\bigt{\MF(S,0)_s},
$$
thus obtaining a morphism
\begin{equation} \label{eqn:motive for Bloch number}
    \BU_{\bbQ,S}
    \simeq 
    \Mv_{\bbQ,S}\bigt{\Perf(S)}
    \xto{{\Mv_{\bbQ,S}}(\Delta_X)} 
    \Mv_{\bbQ,S}\bigt{\Sing(X\times_S X)}
    \xto{\int_{X/S}} 
    \Mv_{\bbQ,S}\bigt{\MF(S,0)_s}.
\end{equation}

\sssec{} 

Recall that, at the level of $\HK_0^{\bbQ}$, the above morphism is the linear map $\bbQ \to \bbQ$ determined by $\Bl(X/S)$.
We now apply $\CR_{\ell}$ to \eqref{eqn:motive for Bloch number} and obtain a $\Ql{S}(\beta)$-linear map
\begin{equation*}
    \Ql{S}(\beta)\simeq \rl_{S}(\Perf(S))
    \xto{\rl_S(\Delta_X)} 
    \rl_{S}(\Sing(X\times_S X)) 
    \xto{\int_{X/S}} 
    \rl_{S}(\MF(S,0)_s)
    \simeq
    (i_S)_*\Ql{s}^{\In}(\beta).
\end{equation*}
This map is determined by an element of 
$$
\pi_0
\Bigt{
\Hom_{\Ql{S}(\beta)}\bigt{\Ql{S}(\beta),
\rl_{S}(\MF(S,0)_s)}}
\simeq 
\Qell,
$$
whose negative we call the \emph{categorical total dimension}.

\begin{cor}\label{cor:cat gen BCC}
In the setting of Theorem \ref{mainthm:hypersurface} (but we no need for the unipotent assumption yet), the $\ell$-adic rational number $\dimtot^{\cat}(X/S)$ belongs to $\bbZ$ and we have
\begin{equation*}
    \Bl(X/S) =-\dimtot^{\cat}(X/S).
\end{equation*}
\end{cor}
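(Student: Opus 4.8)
The plan is to derive both assertions from Theorem~\ref{thm: KS loc int prod = HK_0 int with the diagonal} by comparing the two decategorifications of the morphism \eqref{eqn:motive for Bloch number} --- the one given by $\HK_0^{\bbQ}$ and the one given by $\pi_0$ of the $\ell$-adic realization --- through the non-commutative $\ell$-adic Chern character $\chern_S$. What makes the comparison effective is that both the source $\Perf(S)$ and the target $\MF(S,0)_s$ of \eqref{eqn:motive for Bloch number} decategorify to a one-dimensional object: one has $\HK_0^{\bbQ}(\Perf(S)) \simeq \bbQ$ and, by Proposition~\ref{prop: MF(X,C,0)_Z decomposes as a direct sum in motives} and the remarks following it, $\HK_0^{\bbQ}(\MF(S,0)_s) \simeq \HK_0^{\bbQ}(\Coh(s)) \simeq \bbQ$; correspondingly, by \eqref{eqn:motive of MF(S,0)s} and its $\ell$-adic realization, $\pi_0 \Hom_{\Ql{S}(\beta)}\bigt{\Ql{S}(\beta),\rl_S(\Perf(S))} \simeq \Qell$ and $\pi_0 \Hom_{\Ql{S}(\beta)}\bigt{\Ql{S}(\beta),\rl_S(\MF(S,0)_s)} \simeq \Qell$ --- here the degree-shifted summand of each motive contributes nothing to $\pi_0$, since $s$ is the spectrum of a separably closed field and hence has vanishing negative $K$-theory and \'etale cohomology concentrated in degree zero.

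First I would identify the two endomorphisms attached to \eqref{eqn:motive for Bloch number}. Applying $\HK_0^{\bbQ}$ gives the $\bbQ$-linear endomorphism of $\bbQ \simeq \HK_0^{\bbQ}(\Perf(S))$ that sends the unit to $\int_{X/S}[\Delta_X]$, which by Theorem~\ref{thm: KS loc int prod = HK_0 int with the diagonal} together with the Kato--Saito identity $[[\Delta_X,\Delta_X]]_S = \Bl(X/S)$ equals $\Bl(X/S)$; that is, it is multiplication by $\Bl(X/S)$. Applying $\Rl_S$ and passing to $\pi_0$ of the relevant mapping spectra gives, by the very definition of $\dimtot^{\cat}(X/S)$ in Section~\ref{ssec: cat dimtot}, the $\Qell$-linear endomorphism of $\Qell$ given by multiplication by $-\dimtot^{\cat}(X/S)$. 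Next I would invoke $\chern_S$: it is a lax-monoidal natural transformation out of $\HK$, hence compatible with the dg-functors $\Delta_X$ and $(-,\Delta_X)$ underlying \eqref{eqn:motive for Bloch number}, while the degree map $\wt{\deg}$ of \eqref{eqn:deg-for-motives} and its $\ell$-adic realization are compatible with $\chern_S$ by the construction of the latter. This produces a commutative square whose top arrow is multiplication by $\Bl(X/S)$, whose bottom arrow is multiplication by $-\dimtot^{\cat}(X/S)$, whose left vertical arrow sends the unit of $\HK_0^{\bbQ}(\Perf(S))$ to the unit of $\pi_0 \Hom_{\Ql{S}(\beta)}\bigt{\Ql{S}(\beta),\rl_S(\Perf(S))}$ --- so is the canonical inclusion $\bbQ \hookrightarrow \Qell$, using the Tate-twist trivialization fixed in Section~\ref{ssec: cat dimtot} --- and whose right vertical arrow is the map induced by $\chern_S$ on $\MF(S,0)_s$. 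Granting that this right vertical arrow is, under the identifications above, again the inclusion $\bbQ \hookrightarrow \Qell$, chasing the element $1 \in \bbQ$ around the square yields $-\dimtot^{\cat}(X/S) = \Bl(X/S)$ in $\Qell$; since $\Bl(X/S) \in \bbZ$, this establishes at once the integrality of $\dimtot^{\cat}(X/S)$ and the equality $\Bl(X/S) = -\dimtot^{\cat}(X/S)$.

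The step I expect to cost the most is the last identification: one must check that $\chern_S$ carries the generator $[\CO_s] \in \HK_0^{\bbQ}(\Coh(s)) \simeq \HK_0^{\bbQ}(\MF(S,0)_s)$ to the generator $1 \in \Qell \simeq \pi_0 \Hom_{\Ql{S}(\beta)}\bigt{\Ql{S}(\beta),\rl_S(\MF(S,0)_s)}$, rather than to some nonzero scalar multiple of it. This comes down to the classical fact that the cycle class of $[s] = [\Spec k]$ in $\uH^0_{\et}(s,\Qell)$ is $1$, combined with the compatibility of $\chern_S$ with the explicit identification \eqref{eqn:motive of MF(S,0)s} of the motive of $\MF(S,0)_s$ and with the naturality properties of the non-commutative $\ell$-adic Chern character recorded in \cite[\S 2.3]{tv22}; each ingredient is either standard or established there, but assembling them cleanly does take some bookkeeping. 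Once this is in place, the independence of $\dimtot^{\cat}(X/S)$ from $\ell$ and from the chosen hypersurface presentation of $X$ follows for free, since $\Bl(X/S)$ manifestly has neither dependence.
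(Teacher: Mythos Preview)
Your proposal is correct and follows the same route as the paper's proof. The paper compresses your argument into two lines: it asserts directly that for $\MF(S,0)_s$ the $\ell$-adic Chern character is the inclusion $\bbQ \hookrightarrow \Qell$ (the point you flag as requiring the most bookkeeping), and then naturality of $\chern_S$ with respect to the morphism \eqref{eqn:motive for Bloch number} together with Theorem~\ref{thm: KS loc int prod = HK_0 int with the diagonal} immediately yields $\int_{X/S}[\Delta_X] = -\dimtot^{\cat}(X/S)$ and hence the integrality.
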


\begin{proof}
For the dg-category $\MF(S,0)_s$, the $\ell$-adic Chern character is simply the inclusion $\bbQ \hto \Qell$. Hence, we obtain an equality
$$
\int_{X/S} [\Delta_X]
= - \dimtot^{\cat}(X/S)
$$
of $\ell$-adic rational numbers. Since the left-hand side is an integer, so is the right-hand side.
\end{proof}

\sssec{} 

We conjecture that the above formula coincides with the generalized Bloch conductor formula. We will prove this, under the unipotence assumption, by showing that $\dimtot^{\cat}(X/S) = \dimtot(X/S)$. The rest of Section \ref{sec:unip-monodromy} is devoted to showing that these two total dimensions agree.

\ssec{A duality datum} \label{ssec:duality for rl(T)}

Recall that $\sT$ is dualizable as a left $\sB$-module and that the explicit duality datum was exhibited in Section \ref{ssec: explicit duality datum}.
In general, $\rl_S$ does not preserve dualizability; however, we claim that $\rl_S(\sT)$ is dualizable over $\rl_S(\sB)$. In this section, we construct this duality datum.

\sssec{}

Consider the arrow 
\begin{equation*}
    \rl_{S}(\coev):\rl_{S}(A)\rightarrow \rl_{S}(\T^{\op}\otimes_{\B}\T).
\end{equation*} 
By \cite[Proof of Theorem 5.2.2]{tv22}, this induces an arrow
\begin{equation} \label{eqn:aux1}
    \wt{\coev}_{\rl_S(\sT)}:
    \rl_{S}(A)\rightarrow \rl_{S}(\T^{\op})\otimes_{\rl_{S}(\B)}\rl_{S}(\T).
\end{equation}

\sssec{}

Recall now that $\B$ is not symmetric monoidal. However, the algebra object 
$$
\rl_{S}(\B)
\simeq 
i_*\QellSIbeta
\in \Mod_{\QellSbeta}(\shvl(S))
$$
is commutative. In view of this observation, we see that 
 $\rl_{S}(\T^{\op})\otimes_{\rl_{S}(\B)}\rl_{S}(\T)$ is a $\rl_{S}(\B)$-module and so \eqref{eqn:aux1} induces a map
\begin{equation} \label{eqn:candidate-coev-for-rl}
  \rl_{S}(\B)\rightarrow \rl_{S}(\T^{\op})\otimes_{\rl_{S}(\B)}\rl_{S}(\T),
\end{equation}
which is our candidate coevaluation. We call it   $\coev_{\rl_{S}(\T)}$.

\sssec{}

Let us now turn to the construction of the candidate evaluation for $\rl_S(\sB)$. Start with the evaluation functor $\ev:\T \otimes_A \T^{\op} \rightarrow \B$, which is $\B^{\rev}\otimes_A \B$-linear.
Applying $\rl_S$ to it, we have the map
$$
\rl_{S}(\ev)
:
\rl_S(\sT \otimes_A \sT^{\op}) 
\to 
\rl_S(\sB)
$$
which we can pre-compose with the arrow 
$$
\mu:
\rl_{S}(\T)\otimes_{\rl_{S}(A)}\rl_{S}(\T^{\op})
\to 
\rl_{S}(\T \otimes_A \T^{\op})
$$
given by the lax-monoidal structure on $\rl_{S}$. We obtain an arrow
$$
\wt{\ev}_{\rl_S(\sT)}:
    \rl_{S}(\T)\otimes_{\rl_{S}(A)}\rl_{S}(\T^{\op})\rightarrow \rl_{S}(\B)
$$
which is, by construction, $\rl_{S}(\B^{\rev}) \otimes_{\rl_{S}(A)}\rl_{S}(\B)$-linear. Tensoring up with $\rl_S(\sB)$, we obtain an arrow
$$
    \rl_{S}(\T)\otimes_{\rl_S(\B)}\rl_{S}(\T^{\op})
    \to
    \HH_*(\rl_{S}(\B)/\rl_{S}(A)).
$$

\sssec{}

Since $\rl_{S}(\B)$ is a commutative ring, there is a canonical morphism $\HH_*(\rl_{S}(\B)/\rl_{S}(A))\to \rl_{S}(\B)$. Composing with this, we obtain a map
\begin{equation} \label{eqn:candidate-ev-for-rl}
     \rl_{S}(\T)\otimes_{\rl_S(\B)}\rl_{S}(\T^{\op})\rightarrow \rl_{S}(\B),
\end{equation}
which is our candidate evaluation for $\rl_S(\sT)$ over $\rl_S(\sB)$. We will denote it by $\ev_{\rl_{S}(\T)}$.

\begin{prop}\label{prop:l-adic realization sing is dualizable over B}
With the above notation, the morphisms 
\begin{equation*}
    \coev_{\rl_{S}(\T)}: \rl_{S}(\B)\rightarrow \rl_{S}(\T^{\op})\otimes_{\rl_{S}(\B)}\rl_{S}(\T)
\end{equation*}
\begin{equation*}
    \ev_{\rl_{S}(\T)}: \rl_{S}(\T^{\op})\otimes_{\rl_{S}(\B)}\rl_{S}(\T)\rightarrow \rl_{S}(\B) 
\end{equation*}
appearing in \eqref{eqn:candidate-coev-for-rl} and \eqref{eqn:candidate-ev-for-rl} exhibit $\rl_{S}(\T)$ as a dualizable $\rl_{S}(\B)$-module.
\end{prop}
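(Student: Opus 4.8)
The plan is to verify directly the two triangle identities for the candidate pair $(\ev_{\rl_S(\T)},\coev_{\rl_S(\T)})$ of \eqref{eqn:candidate-coev-for-rl} and \eqref{eqn:candidate-ev-for-rl} — that is, the analogues of \eqref{eqn:zorro-1}--\eqref{eqn:zorro-2} with $\rl_S(\T)$ over $\rl_S(\B)$ in place of $\T$ over $\B$ — and to reduce them to the identities \eqref{eqn:zorro-1}--\eqref{eqn:zorro-2} themselves, which hold by Proposition \ref{prop: explicit duality datum for T/S}. The reduction is carried out by applying the lax symmetric monoidal functor $\rl_S$ to \eqref{eqn:zorro-1}--\eqref{eqn:zorro-2} and comparing the resulting diagrams with those for $\rl_S(\T)$ through the lax-structure maps of $\rl_S$.

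Two families of comparison maps are in play. First, for a right $\B$-module $\mathsf{M}'$ and a left $\B$-module $\mathsf{M}$ the lax structure produces natural arrows $\rl_S(\mathsf{M}')\otimes_{\rl_S(\B)}\rl_S(\mathsf{M})\to \rl_S(\mathsf{M}'\otimes_{\B}\mathsf{M})$, together with their $A$-linear analogues $\mu\colon \rl_S(\T)\otimes_{\rl_S(A)}\rl_S(\T^{\op})\to \rl_S(\T\otimes_A\T^{\op})$; by construction, see \eqref{eqn:aux1} and the argument of \cite[Theorem 5.2.2]{tv22}, $\coev_{\rl_S(\T)}$ is obtained from the canonical lift $\wt{\coev}_{\rl_S(\T)}$ of $\rl_S(\coev)$ through such an arrow by extension of scalars along $\rl_S(A)\to \rl_S(\B)$. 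Second, $\ev_{\rl_S(\T)}$ is assembled from $\rl_S(\ev)\circ \mu$ by extending scalars to $\rl_S(\B)$-modules and then composing with the canonical collapse $\HH_*(\rl_S(\B)/\rl_S(A))\to \rl_S(\B)$; the latter exists precisely because $\rl_S(\B)\simeq i_*\QellSIbeta$ is commutative, an identification that also yields $\rl_S(\B^{\rev})\simeq \rl_S(\B)$ and hence the $\rl_S(\B)$-module structure carried by $\rl_S(\T^{\op})$.

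For the first triangle identity, applying $\rl_S$ to \eqref{eqn:zorro-1} produces a composite $\rl_S(\T)\simeq \rl_S(\T\otimes_A\Perf(A))\to \rl_S(\T\otimes_A\T^{\op}\otimes_{\B}\T)\to \rl_S(\B\otimes_{\B}\T)\simeq \rl_S(\T)$ which is homotopic to $\id_{\rl_S(\T)}$, being the image under the functor $\rl_S$ of a composite homotopic to $\id_{\T}$. I would then exhibit the defining composite $\rl_S(\T)\to \rl_S(\T)\otimes_{\rl_S(\B)}\rl_S(\T^{\op})\otimes_{\rl_S(\B)}\rl_S(\T)\to \rl_S(\T)$ for $\rl_S(\T)$ as the top row of a ladder of commuting squares whose bottom row is the composite just described, the vertical maps being the appropriate iterated instances of $\mu$ and of the $\B$-relative comparison arrows above. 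Commutativity of the squares follows from naturality of the lax-structure maps, the coherences of $\rl_S$ for the associativity and unit constraints, and the defining properties of $\wt{\coev}_{\rl_S(\T)}$ and $\ev_{\rl_S(\T)}$ recalled above; since the objects at the two ends of the ladder carry no tensor factors beyond $\rl_S(\T)$ (up to unit constraints), the corresponding vertical maps are equivalences, so the top composite is homotopic to the bottom one, i.e.\ to $\id_{\rl_S(\T)}$. The second triangle identity \eqref{eqn:zorro-2} is entirely symmetric, with the roles of $\rl_S(\T)$ and $\rl_S(\T^{\op})$ interchanged.

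The main obstacle I anticipate is the coherence bookkeeping needed to make the ladder commute: one must check that the instances of the comparison maps for the two- and three-fold relative tensor products — and for the $A$-relative versus $\B$-relative tensor products occurring in \eqref{eqn:zorro-1} — are mutually compatible, and, most delicately, that the collapse $\HH_*(\rl_S(\B)/\rl_S(A))\to \rl_S(\B)$ built into $\ev_{\rl_S(\T)}$ via commutativity of $\rl_S(\B)$ is compatible with the comparison maps, so that the $\ev_{\rl_S(\T)}\otimes\id$ step genuinely corresponds to $\rl_S(\ev\otimes\id)$ along the ladder. One also has to confirm en route that $\rl_S(\T^{\op})$, with the $\rl_S(\B)$-module structure transported across $\rl_S(\B^{\rev})\simeq\rl_S(\B)$, is the dual claimed and not a twist of it. I expect that, once these comparison maps and their compatibilities are in place, the remaining verifications are formal and follow closely the template of \cite{tv22}.
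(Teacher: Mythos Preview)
Your proposal is correct and follows essentially the same approach as the paper. The paper's proof consists of drawing exactly the ladder you describe: a commutative diagram whose outer rows are the composite $(\wt{\ev}_{\rl_S(\T)}\otimes\id)\circ(\id\otimes\wt{\coev}_{\rl_S(\T)})$ and the composite $\rl_S(\ev\otimes\id)\circ\rl_S(\id\otimes\coev)$, with the intermediate nodes and vertical arrows coming from the lax-monoidal structure of $\rl_S$; the conclusion is then a diagram chase, and the second triangle identity is declared analogous. Your anticipated coherence bookkeeping (compatibility of the $A$-relative versus $\B$-relative comparison maps, and the role of the collapse $\HH_*(\rl_S(\B)/\rl_S(A))\to\rl_S(\B)$) is precisely what is implicit in the paper's phrase ``a diagram chase shows''.
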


\begin{proof}
Consider the commutative diagram below, where the unnamed arrows are induced by the lax-monoidal structure of $\rl_S$. To reduce clutter, we write $r$ in place of $\rl_{S}$. 
\begin{small}
\begin{equation*}
    \begin{tikzpicture}[scale=1.5]
    \node (LLu) at (-4.9,1) {$r(\T)$};
    \node (Lu) at (-3.5,1) {$r(\T)\otimes_{r(A)} r(A)$};
    \node (Cu) at (0,1) {$r(\T)\otimes_{r(A)}r(\T^{\op})\otimes_{r(\B)}r(\T)$};
    \node (Ru) at (3.5,1) {$r(\B)\otimes_{r(\B)}r(\T)$};
    \node (RRu) at (4.9,1) {$r(\T)$};
    \node (LLc) at (-4.9,-1) {$r(\T\otimes_AA)$};
    \node (Lc) at (-1.8,0) {$r(\sT)\otimes_{r(A)}r(\T^{\op}\otimes_{\B}\T)$};
    \node (Rc) at (1.8,0) {$r(\sT \otimes_{A}\T^{\op})\otimes_{r(\B)}r(\T)$};
    \node (RRc) at (4.9,-1) {$r(\B\otimes_{\B}\T)$};
    \node (Cd) at (0,-1) {$r(\T\otimes_A\T^{\op}\otimes_{\B}\T)$};
    \draw[->] (LLu) to node[above] {$\sim$} (Lu);
    \draw[->] (Lu) to node[above] {$\id \otimes \wt{\coev}_{r(\T)}$} (Cu);
    \draw[->] (Cu) to node[above] {$\wt{\ev}_{r(\T)}\otimes \id$} (Ru);
    \draw[->] (Ru) to node[above] {$\sim$} (RRu);
    %
    \draw[->] (LLu) to node[left] {$\sim$} (LLc);
    \draw[->] (Lu) to node[left] {$\id \otimes r(\coev)\;\;$} (Lc);
    \draw[->] (Cu) to node[left] {${}$} (Lc);
    \draw[->] (Cu) to node[left] {${}$} (Rc);
    \draw[->] (Rc) to node[right] {$\;\; r(\ev)\otimes \id$} (Ru);
    \draw[->] (RRc) to node[right] {$\sim$} (RRu);
    \draw[->] (LLc) to node[below] {$r(\id \otimes \coev) \;\;\;\;$} (Cd);
    \draw[->] (Lc) to node[left] {${}$} (Cd);
    \draw[->] (Rc) to node[left] {${}$} (Cd);
    \draw[->] (Cd) to node[below] {$\;\;\;\; r(\ev \otimes \id)$} (RRc);
    \end{tikzpicture}
\end{equation*}
\end{small}
A diagram chase shows that
$$
(\ev_{\rl_{S}(\T)}\otimes \id)\circ (\id\otimes \coev_{\rl_{S}(\T)})
\simeq 
\rl_S\bigt{(\ev\otimes \id) \circ (\id \otimes \coev)}
\simeq
\id_{\rl_{S}(\T)}.
$$
The other identity
$$
(\id\otimes \ev_{\rl_{S}(\T)})\circ (\coev_{\rl_{S}(\T)}\otimes \id)
\simeq 
\id_{\rl_{S}(\T)}
$$
is proven similarly.
\end{proof}
\begin{rmk}
    Notice that $\rl_S(\T)\simeq \rl_S(\T^{\op})$, so that the above proposition implies that $\rl_S(\T)$ is self-dual.
\end{rmk}

\begin{rmk}
    Notice that in Proposition \ref{prop:l-adic realization sing is dualizable over B} only the hypothesis of unipotent action is needed. 
    In particular, the proposition is valid regardless of the fact that $X$ embeds as an hypersurface in a smooth $S$-scheme.
\end{rmk}

\ssec{Integration and evaluation}

The goal of this section is to relate the integration map
$$
\int_{X/S}:
\rell_S \bigt{\Sing(X \times_S X)}
\to 
\rell_S \bigt{ \MF(S,0)_s}.
$$
with the evaluation $\ev_{\rl_S(T)}$. Namely, we will prove the following compatibility.

\begin{prop} \label{prop:int-vs-ev}
The diagram 
\begin{equation} 
\nonumber
\begin{tikzpicture}[scale=1.5]
\node (00) at (0,0) {$ \rl_S(\B)  $};
\node (10) at (3,0) {$\rl_S(\MF(S,0)_s)$ };
\node (01) at (0,1) {$ \rl_S (\sT)\otimes_{\rl_S(\B)}\rl_S(\sT)$};
\node (11) at (3,1) {$ \rl_S(\Sing(X \times_S X)) $};
\path[->,font=\scriptsize,>=angle 90]
(00.east) edge node[above] {$\int_{s/S}$}  (10.west); 
\path[->,font=\scriptsize,>=angle 90]
(01.east) edge node[above] {$ \simeq $} (11.west); 
\path[->,font=\scriptsize,>=angle 90]
(01.south) edge node[right] {$\ev_{\rl_S(\sT)} $} (00.north); 
\path[->,font=\scriptsize,>=angle 90]
(11.south) edge node[right] {$ \int_{X/S} $} (10.north);
\end{tikzpicture}
\end{equation}
is commutative.
\end{prop}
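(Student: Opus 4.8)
The strategy is to reduce the commutativity of the square to the commutativity of a diagram of dg-functors, and then apply the lax-monoidal, colimit- and localization-preserving functor $\rl_S$; the only ingredient that genuinely lives after realization is the collapse $\HH_*(\rl_S(\sB)/\rl_S(A))\to\rl_S(\sB)$, which uses the commutativity of $\rl_S(\sB)\simeq i_*\QellSIbeta$ — this is where the unipotence hypothesis enters, exactly as in Proposition \ref{prop:l-adic realization sing is dualizable over B}. Unwinding the definitions: by \eqref{eqn:candidate-ev-for-rl}, the composite $\int_{s/S}\circ\,\ev_{\rl_S(\sT)}$ is $\rl_S(\ev)$ precomposed with the lax-monoidal comparison $\rl_S(\sT)\otimes_{\rl_S(A)}\rl_S(\sT^{\op})\to\rl_S(\sT\otimes_A\sT^{\op})$, tensored up over $\rl_S(\sB)$, then composed with the $\HH$-to-$\sB$ collapse and with $\int_{s/S}$ — the integration map attached to the presentation $s=\on{V}(\pi)\subseteq S$, which, since one may take $P=S$ with $L$ trivial and $d\delta$ the identity, is simply the realization of the proper pushforward of matrix factorizations along $s\hookrightarrow S$; while the composite of $\int_{X/S}$ with the top equivalence is $\rl_S$ of the intersection-with-the-diagonal functor $(-,\Delta_X)=\Xi\circ d\delta_X^*$, composed with the degree $\wt{\deg}$, precomposed with $\rl_S(\ffF)$ (Theorem \ref{thm:TV}) and with the comparison $\rl_S(\sT)\otimes_{\rl_S(\sB)}\rl_S(\sT)\to\rl_S(\sT^{\op}\otimes_{\sB}\sT)$, which is an equivalence by \cite[Proof of Theorem 5.2.2]{tv22}. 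Thus it suffices to produce a commutative diagram, at the dg level, identifying $(-,\Delta_X)\circ\ffF$ followed by the degree $\MF(X,L,0)_Z\to\MF(S,0)_s$ with $\ev^{\HH}_{\sT/\sB}$ followed by the (realization of the) $\HH$-to-$\sB$ collapse and by $\int_{s/S}$.

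\textbf{Key inputs.} The heart of this identification is the central square \eqref{diag:main for sing}, namely $\phi\circ(\id\otimes\ffF)\simeq\star\circ(\ev\otimes\id)$, together with Lemma \ref{lem: wt phi comp diag = id}, which identifies $\phi(-,\Delta_X)$ with $\id_{\sT}$: transported through $\ffF$, these say precisely that "intersecting against $\Delta_X$" is computed by the evaluation followed by the $\sB$-action. To pass from this to the statement about $d\delta_X^*$ and the degree maps, I would use the explicit description obtained in the proof of Proposition \ref{prop: KS loc int prod = HK_0 int with the diagonal}: there the factorization $\delta_X\simeq d\delta_X\circ\iota$ and the Koszul resolution of $\iota_*\O_X$ were used to identify $\Xi(d\delta_X^*E)$, via eventual two-periodicity, with the stable sheaves $\ul\Tor^{X\times_SX}_{2n}(\Delta_X,E)$ and $\ul\Tor^{X\times_SX}_{2n+1}(\Delta_X,E)$. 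Rerunning that computation with $E$ replaced by $\wt\ffF(\psi,F)=j_*(\bbD\psi\boxtimes_s F)$ — and tracking everything via base change and the projection formula along the fiber squares relating $X_s\times_X X_s$, $X\times_S X$, $G=s\times_S s$ and their Koszul thickenings $K(X,L,0)\simeq X\times_P X$ — matches $d\delta_X^*\circ\wt\ffF$ with $\wt\ev$, up to the evident swap of the two $\Coh(X_s)$-factors. Finally, the compatibility of the degree maps themselves — the proper pushforward $r_*$ occurring in $\wt\ev$, the degree $\wt{\deg}\colon\MF(X,L,0)_Z\to\MF(S,0)_s$, and $\int_{s/S}$ — follows from functoriality of $f^{!}$ and of proper pushforward, exactly in the manner of \eqref{eqn:natural arrow motives}. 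Assembled and realized, these produce the square; it specializes correctly on the diagonal and on graphs by Remark \ref{rem: endomorphism induced by push and pull on T} and by the identity $\int_{X/S}[\Delta_X]=\Bl(X/S)$.

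\textbf{Main obstacle.} The chief difficulty is that the comparison cannot be carried out entirely before realization: at the dg level $\sB$ is not commutative, so $\ev^{\HH}_{\sT/\sB}$ only lands in $\HH_*(\sB/A)$, and both the collapse $\HH_*(\rl_S(\sB)/\rl_S(A))\to\rl_S(\sB)$ and hence the very target of $\int_{s/S}$ only come into existence after applying $\rl_S$. One must therefore interleave the dg-level square \eqref{diag:main for sing}/Lemma \ref{lem: wt phi comp diag = id} with the post-realization structure, much as in the diagram chase proving Proposition \ref{prop:l-adic realization sing is dualizable over B}; keeping track of the two balancings ($\otimes_A$ versus $\otimes_{\sB}$, and $\otimes_{\rl_S(A)}$ versus $\otimes_{\rl_S(\sB)}$) and of the coherences of the lax-monoidal comparison maps of $\rl_S$ is where most of the work resides. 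The remaining, essentially clerical, obstacle is matching the presentation-dependent functor $d\delta_X^*$ (built from $X=\on{V}(s)\subseteq P$ and $L$) with the intrinsic $\ev$; this is exactly what was already extracted in Proposition \ref{prop: KS loc int prod = HK_0 int with the diagonal}, so no genuinely new idea is needed there.
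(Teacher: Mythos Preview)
Your approach is genuinely different from the paper's and has a real gap. The paper does \emph{not} attempt to match $d\delta_X^*\circ\ffF$ with $\ev$ (or $\ev^{\HH}$) at the dg level. Instead, it first proves a weaker square (Lemma~\ref{lem: left square tech-prop}, with $\otimes_{\rl_S(A)}$ in place of $\otimes_{\rl_S(\B)}$) and then tensors up using $\rl_S(\sB)$-linearity. To prove that lemma, the paper \emph{dualizes}: since $\rl_S(\sT)$ is dualizable over $\rl_S(A)$, both $\wt\ev_{\rl_S(\sT)}$ and $\int_{X/S}\circ\, i_{\sT}$ correspond, under duality, to maps $\rl_S(\sB)\otimes_{\rl_S(A)}\rl_S(\sT)\to\rl_S(\sT)$. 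These action maps are then computed explicitly using \cite[Prop.~4.27, Thm.~4.39]{brtv18} as morphisms of $\ell$-adic sheaves on $Z$ (namely two actions of $\Ql{}^{\In}(\beta)$ on $\scrV_{X/S}^{\In}[-1]$), and the comparison is finished by a local reduction showing both actions are induced from a common one via $\Sing(G_S^{\on{comm}})$. None of \eqref{diag:main for sing}, Lemma~\ref{lem: wt phi comp diag = id}, or the Koszul computation of Proposition~\ref{prop: KS loc int prod = HK_0 int with the diagonal} is invoked.

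The gap in your plan is the sentence ``matches $d\delta_X^*\circ\wt\ffF$ with $\wt\ev$, up to the evident swap''. As written this cannot be correct: the target of $d\delta_X^*\circ\wt\ffF$ is $\Coh\bigl(K(X,L,0)\bigr)$ while the target of $\wt\ev$ is $\Coh(G)$, and there is no dg-functor between these that would let you compare them before realization. You implicitly need the degree map $\wt\deg:\MF(X,L,0)_Z\to\MF(S,0)_s$ to make the targets agree, but that map is defined only at the motivic/$\ell$-adic level (equation~\eqref{eqn:deg-for-motives}), not as a dg-functor; this is precisely the feature that distinguishes the hypersurface case from the pure-characteristic case of Section~\ref{sec:pure-char}, where pushforward along $K(X,0)\to K(S,0)$ \emph{is} a genuine dg-functor and your style of argument does go through (cf.\ Lemma~5.2.2). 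In the hypersurface setting you are forced to work after realization from the start, and once there the cleanest route is the paper's: dualize to action maps and compute both sides via the known $\ell$-adic realization of $\Sing(X_s)$, rather than trying to push a nonexistent dg-level identity through $\rl_S$.
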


\sssec{}

Before starting the proof, let us introduce the following notation.
Let 
$$
i_\T:\rl_S(\sT)\otimes_{\rl_S(A)}\rl_S(\sT)\to \rl_S\bigt{\Sing(X\times_SX)}
$$ 
be the morphism obtained by the lax monoidal structure on $\rl_S$, the canonical morphism $\T\otimes_A\sT^{\op}\to \T^{\op}\otimes_\B \sT$ and $\ffF: \T^{\op}\otimes_\B \sT \to \Sing(X \times_S X)$.

\begin{lem}\label{lem: left square tech-prop}
The diagram 
\begin{equation}  \label{diag:pre-big-comm-diagram-rl (GR)}
  \begin{tikzpicture}[scale=1.5]
    \node (Lu) at (-4,1.2) {$\rl_S(\sT) \otimes_{\rl_S(A)} \rl_S(\sT)$};
    \node (Ld) at (-4,0) {$\rl_S(\sB)$};
    \node (Cu) at (0,1.2) {$\rl_S\bigt{\Sing(X\times_S X)}$};
    \node (Cd) at (0,0) {$ \rl_S\bigt{\MF(S,0)_s}$};
    \draw[->] (Lu) to node[right] { $\wt{\ev}_{\rl_S(\sT)}$ } (Ld);
    \draw[->] (Cu) to node[right] { $\int_{X/S}$ } (Cd);
    \draw[->] (Lu) to node[above]{$i_\T$} (Cu);
    \draw[->] (Ld) to node[above]{$\int_{s/S}$} (Cd);
  \end{tikzpicture}
  \end{equation}
is commutative.
\end{lem}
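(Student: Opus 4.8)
The plan is to reduce the statement to a commutative square of dg-functors, \emph{before} applying $\rl_S$, and then verify it by an explicit derived base-change computation. First, note that both legs of the square factor through the canonical lax-monoidal arrow
$$
\mu \colon \rl_S(\sT) \otimes_{\rl_S(A)} \rl_S(\sT) \longrightarrow \rl_S(\sT \otimes_A \sT^{\op}):
$$
indeed $i_\T = \rl_S(\ffF \circ \mathrm{can}) \circ \mu$ and $\wt{\ev}_{\rl_S(\sT)} = \rl_S(\ev) \circ \mu$ by construction. Hence it suffices to prove
$$
\int_{X/S} \circ\, \rl_S(\ffF \circ \mathrm{can}) \;\simeq\; \int_{s/S} \circ\, \rl_S(\ev)
$$
as morphisms $\rl_S(\sT \otimes_A \sT^{\op}) \to \rl_S \bigt{ \MF(S,0)_s }$.

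Second, I would rewrite the two \virg{degree} maps as $\ell$-adic realizations of honest dg-functors. Recall that $\int_{X/S} = \wt{\deg}_X \circ \rl_S(d\delta_X^*)$ with $d\delta_X^* = (-,\Delta_X)$, and that $\wt{\deg}_X$ was built from the motivic decomposition of Proposition \ref{prop: MF(X,C,0)_Z decomposes as a direct sum in motives} (and its evident analogue for $\MF(S,0)_s$) together with the counit \eqref{eqn:natural arrow motives}. By compatibility of $\rl_S$ with proper pushforward of matrix-factorization categories (cf.\ \cite{brtv18,pi22}), $\wt{\deg}_X$ is identified with $\rl_S$ of the proper-pushforward dg-functor $\MF(X,L,0)_Z \to \MF(S,0)_s$ attached to $Z \hookrightarrow X \to S$; similarly, by its very construction, $\int_{s/S}$ is $\rl_S$ of a comparison dg-functor $\sB = \Sing(G) \to \MF(S,0)_s$ (a base-change equivalence coming from $G \simeq K(S,0) \times_S s$, the relevant degree being trivial since $s \to s$ is the identity). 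Granting these identifications, the square in question becomes the $\rl_S$-image of a square of dg-functors, namely $\bigl( \T \otimes_A \T^{\op} \xrightarrow{\ \ev\ } \Sing(G) \xrightarrow{\ \sim\ } \MF(S,0)_s \bigr)$ versus $\bigl( \T \otimes_A \T^{\op} \xrightarrow{d\delta_X^* \circ \ffF \circ \mathrm{can}} \MF(X,L,0)_Z \longrightarrow \MF(S,0)_s \bigr)$, and it is enough to produce a natural isomorphism between these two composites.

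This last point is the heart of the matter; I would establish it by a base-change/projection-formula computation in the spirit of Proposition \ref{prop: comm diag for ev}. One uses the explicit description $\wt{\ev}(E,F) \simeq (\pr_G)_* \bigt{ \mu^* E \otimes \pr^*(\bbD F) }$ of Remark \ref{rem:ev}, the formula $\wt{\ffF}(E,F) = j_*(\bbD E \boxtimes_s F)$, the factorization $\delta_X \simeq d\delta_X \circ \iota$ of \eqref{eqn:factoriz of diagonal} together with the Koszul resolution of $\iota_* \O_X$ over $K(X,L,0)$ and the resulting two-periodification $\Xi(d\delta_X^*(-))$ recalled in the proof of Proposition \ref{prop: KS loc int prod = HK_0 int with the diagonal}, and the fiber squares of Section \ref{ssec:construction intersection with diag} relating $K(X,L,0) = X \times_P X$, $X \times_S X$, $X_s \times_s X_s$, $G$ and $K(S,0)$. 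Concretely, one chases a pair $(E,F)$ of coherent sheaves on $X_s$ through both paths: along the $\ev$-side one reaches $\wt{\ev}(E,F)$, transported to a matrix factorization over $s$ by the equivalence $\Sing(G) \simeq \MF(S,0)_s$; along the other side one forms $d\delta_X^* j_*(\bbD F \boxtimes_s E)$, two-periodifies, and pushes it forward to $s$ along the proper map furnished by $Z \hookrightarrow X \to S$ (the support over $Z$ being guaranteed by Lemma \ref{lem: pullback along derived pullback factors through MF with support on the singular locus}). Proper base change, the projection formula, and the dictionary $\mu, \pr, r \leftrightarrow q_1, q_2$ of Section \ref{sssec:notations} then yield the required functorial isomorphism, exactly as in the proof of Proposition \ref{prop: comm diag for ev}, the additional ingredient being that the two-periodification is absorbed by the Koszul resolution of $\iota_* \O_X$ at the $K(X,L,0)$-corner.

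Finally, all the isomorphisms so produced are base-change equivalences and projection-formula identities, hence natural and compatible with the Verdier quotients $\Coh \to \Sing$ and $\Coh \to \MF$ (using $\Sing(X \times_S X)_{X_s \times_s X_s} \simeq \Sing(X \times_S X)$, which holds since $X/S$ is generically smooth); therefore the dg-square commutes, and applying $\rl_S$ concludes. I expect the main obstacle to be the bookkeeping in the core computation just sketched --- in particular, pinning down the proper map over the special fiber and checking that its pushforward, after two-periodification, reproduces $\wt{\ev}$ --- together with the preliminary step of recognising $\wt{\deg}_X$ as the realization of a genuine pushforward dg-functor of matrix-factorization categories; should the latter prove awkward, one can instead run the whole argument inside $\Mod_{\Ql{S}(\beta)}(\shvl(S))$, using the explicit motivic decompositions and the naturality of the counit at the price of heavier diagram chasing.
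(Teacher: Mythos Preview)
Your primary strategy has a genuine gap at the second step: the map $\wt{\deg}_X$ cannot be identified with $\rl_S$ of a dg-functor $\MF(X,L,0)_Z \to \MF(S,0)_s$. For such a pushforward of matrix-factorization categories to exist one would need $L$ to be pulled back from $S$, but $L$ lives on $P$ and has no reason to descend. The splitting in Proposition~\ref{prop: MF(X,C,0)_Z decomposes as a direct sum in motives} that underlies $\wt{\deg}_X$ uses only that $\restr{L}{Z}$ is trivial \emph{in $\uG$-theory} (\cite[Lemma 5.1.3]{ks04}), a rational statement with no dg-categorical avatar. Consequently your reduction to a commuting square of dg-functors collapses, and the base-change computation you sketch in the third paragraph, even if carried out, would establish a comparison in the wrong category. (Contrast Section~\ref{sec:pure-char}, where the line bundle is absent and a genuine dg-functor $\Sing(K(X,0))_Z \to \Sing(K(S,0))_s$ does exist; the paper exploits exactly this there.) Your own caveat about this step is well placed, but the fallback of ``running everything inside $\Mod_{\Ql{S}(\beta)}(\shvl(S))$'' is not yet a plan: naturality of the counit alone does not explain why the $d\delta_X^*$-side should see the $\sB$-action encoded in $\ev$.

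The paper's route is quite different and worth knowing. Rather than lift to dg-categories, it uses that $\rl_S(\sT)$ and $\rl_S(\sB)$ are dualizable over $\rl_S(A)$ as $\ell$-adic sheaves. Under this duality, each of the two maps $\rl_S(\sT)\otimes_{\rl_S(A)}\rl_S(\sT)\to\rl_S(\sB)$ transposes to a map $\rl_S(\sB)\otimes_{\rl_S(A)}\rl_S(\sT)\to\rl_S(\sT)$: one is the honest $\rl_S(\sB)$-action on $\rl_S(\sT)$, the other arises from the $\MF(P,L,0)$-action on $\Sing(X_s)$ coming from $X \simeq P\times_{\tau,L,0}P$. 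Via \cite[Theorem 4.39]{brtv18} both become explicit maps $a,a'$ of $\ell$-adic sheaves on $X_s$, and their agreement is now a \emph{local} statement; so one may assume $L$ trivial and $X = S\times_{\bbA^1_S} P$. In that local picture there are actions of the three derived groupoids $G = s\times_S s$, $G^{\on{comm}} = s\times_{\bbA^1_s} s$, and $G_S^{\on{comm}} = S\times_{\bbA^1_S} S$ on $\Sing(X_s)$, and both $a$ and $a'$ are induced (after $\rl_S$) by the $\Sing(G_S^{\on{comm}})$-action, hence coincide. The two ideas your proposal is missing are the duality transposition and the localization step that trivializes $L$.
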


\begin{proof}
We proceed in steps.
\sssec*{Step 1}
Both $\rl_S(\sT)$ and $\rl_S(\sB)$ are dualizable over $\rl_S(A)$.
An easy computation shows that 
$$
\wt{\ev}_{\rl_S(\sT)}:\rl_S(\sT)\otimes_{\rl_S(A)}\rl_S(\sT)\to \rl_S(\sB)
$$
identifies via duality to a morphism
\begin{equation}\label{eqn: act r(B) on r(T)}
    \rl_S(\sB)\otimes_{\rl_S(\sA)}\rl_S(\sT)
    \to
    \rl_S(\sT).
\end{equation}
This is the action of $\rl_S(\sB)$ on $\rl_S(\sT)$.
By \cite[Proposition 4.27, Theorem 4.39]{brtv18}, \eqref{eqn: act r(B) on r(T)} is equivalent to a morphism
\begin{equation*}
    (i_S)_*(p_s)_*\bigt{\Ql{X_s}^{\In}(\beta)\otimes_{\Ql{X_s}(\beta)}\scrV_{X/S}^{\In}[-1]}
    \to
    (i_S)_*(p_s)_*\scrV_{X/S}^{\In}[-1],
\end{equation*}
which is the image along $(i_S)_*(p_s)_*$ of the morphism 
\begin{equation}\label{eqn: morphism a}
    a:
    \Ql{X_s}^{\In}(\beta)\otimes_{\Ql{X_s}(\beta)}\scrV_{X/S}^{\In}[-1]
    \to
    \scrV_{X/S}^{\In}[-1]
\end{equation}
induced by the action of $\Ql{X_s}^{\In}(\beta)$ on $\scrV_{X/S}^{\In}[-1]$.

\sssec*{Step 2}
Thanks to the equivalences
$$
  \rl_S(\sB)
  \simeq
  \rl_S\bigt{\MF(S,0)_s}
  \simeq
  (i_S)_*\QellI(\beta),
$$
the morphism $\int_{X/S}\circ\, i_{\T}:\rl_S(\sT)\otimes_{\rl_S(A)}\rl_S(\sT)\to \rl_S(\sB)$ identifies, via duality, to a morphism
$$
  \rl_S(\sB)\otimes_{\rl_S(\sA)}\rl_S(\sT)
  \to
  \rl_S(\sT).
$$
Invoking once again \cite[Proposition 4.27, Theorem 4.39]{brtv18}, this identifies with a morphism
\begin{equation}\label{eqn: int circ i_T via duality and equivalences}
  (i_S)_*(p_s)_*\bigt{\Ql{X_s}^{\In}(\beta)\otimes_{\Ql{X_s}(\beta)}\scrV_{X/S}^{\In}[-1]}
    \to
    (i_S)_*(p_s)_*\scrV_{X/S}^{\In}[-1].
\end{equation}
\sssec*{Step 3}

Next, we describe the latter morphism explicitly.
The $\Perf(P)$-linear dg-category $\Sing(X_s)$ carries an action of $\MF(P,L,0)$. 
This follows immediately from the isomorphism
$$
  X\simeq P\times_{\tau,L,0}P.
$$
In particular, we find that $\rl_P\bigt{\MF(P,L,0)}$ acts on $\rl_P\bigt{\Sing(X_s)}$.
Let $k:Z\to P$ denote the closed embedding of $Z$ inside $P$ and let us identify $\scrV_{X/S}$ with an $\ell$-adic sheaf on $Z$.
Then, since (\cite[Theorem 4.39]{brtv18})
$$
  \rl_P\bigt{\Sing(X_s)}
  \simeq
  k_* \scrV_{X/S}^{\In}[-1],
$$
this action of $\rl_P\bigt{\MF(P,L,0)}$ factors through the canonical morphism of algebras
$$
  \rl_P\bigt{\MF(P,L,0)}
  \to
  k_*k^* \rl_P\bigt{\MF(P,L,0)}.
$$
However, just as in Proposition \ref{prop: MF(X,C,0)_Z decomposes as a direct sum in motives}, we find that 
$$
  k_*k^* \rl_P\bigt{\MF(P,L,0)}
  \simeq
  k_*\Ql{Z}^{\In}(\beta).
$$
Then \eqref{eqn: int circ i_T via duality and equivalences} identifies with the image along $(i_S)_*(Z\to s)_*$ of this action of $\Ql{Z}^{\In}(\beta)$ on $\scrV_{X/S}^{\In}[-1]$. 
We introduce the following notation for future reference:
\begin{equation}\label{eqn: morphism a'}
    a':
    \Ql{Z}^{\In}(\beta)\otimes_{\Ql{Z}(\beta)}\scrV_{X/S}^{\In}[-1]
    \to
    \scrV_{X/S}^{\In}[-1].
\end{equation}

\sssec*{Step 4} 

Thanks to the steps above, in order to prove that $\int_{s/S} \circ \eps_{\sT}$ and $\int_{X/S}\circ i_{\T}$ are homotopic, it suffices to show that $a$ and $a'$ are so, which is a local statement.
Hence, from now we assume that
$$
X
\simeq
S\times_{\bbA^1_S}P
$$
is the zero locus of a function $f: P\to \bbA^1_S$.

\sssec*{Step 5}

Using $P_s := P \times_S s$, we obtain the following alternative presentations of $X_s$ as a fiber product:
$$
X_s
\simeq 
s \times_{\bbA^1_s}P_s
$$
$$
X_s
\simeq
S \times_{\bbA^1_S} P_s.
$$
These two expressions make it evident that $X_s$ is endowed with actions of the derived groups
$$
G^{\on{comm}}=s\times_{\bbA^1_s}s
$$
$$
G_S^{\on{comm}}=S\times_{\bbA^1_S} S.
$$
Since $P_s$ is a regular scheme, we obtain actions of $\Sing(G^{\on{comm}})$ and $\Sing(G_S^{\on{comm}})$ on $\Sing(X_s)$. 
By construction, the former action induces the morphism \eqref{eqn: morphism a'}, while the morphism \eqref{eqn: morphism a} is induced by the usual action of $\Sing(G)$ on $\Sing(X_s)$.

To conclude, it suffices to notice that these action coincide at the level of $\ell$-adic sheaves. The reason they do is that they are both induced by the action of $\Sing\bigt{G_S^{\on{comm}}}$. Namely, consider the commutative diagram
\begin{equation*}
    \begin{tikzpicture}[scale=1.5]
        \node (Lu) at (0,1) {$\rl_S\bigt{\Sing(G_S^{\on{comm}})}$};
        \node (Ru) at (3,1) {$\rl_S\bigt{\Sing(G^{\on{comm}})}$};
        \node (Ld) at (0,0) {$\rl_S\bigt{\Sing(G)}$};
        \node (Rd) at (3,0) {$(i_S)_*(i_S)^*\Qell^{\In}(\beta).$};

        \draw[->] (Lu) to node[above] {${}$} (Ru);
        \draw[->] (Lu) to node[above] {${}$} (Ld);
        \draw[->] (Ru) to node[right] {$\simeq$} (Rd);
        \draw[->] (Ld) to node[above] {$\simeq$} (Rd);
    \end{tikzpicture}
\end{equation*}
Under the equivalence $\rl_S\bigt{\Sing(G_S^{\on{comm}})}\simeq \Ql{S}^{\In}(\beta)$, both arrows $\rl_S\bigt{\Sing(G_S^{\on{comm}})}\to (i_S)_*(i_S)^*\Ql{S}^{\In}(\beta)$ identify with the canonical arrow $\Ql{S}^{\In}(\beta)\to (i_S)_*(i_S)^*\Ql{S}^{\In}(\beta)$.
\end{proof}

\sssec{} 

To deduce Proposition \ref{prop:int-vs-ev} from the above lemma, it suffices to observe that the arrows of
\eqref{diag:pre-big-comm-diagram-rl (GR)}
are all $\rl_S(\sB)$-linear.

\ssec{Conclusion of the proof of Theorem \ref{mainthm:hypersurface}} \label{ssec:proof of Thm B}

Here we show that, in the setting of Theorem \ref{mainthm:hypersurface}, the categorical total dimension we defined in Section \ref{ssec: cat dimtot} coincides with the usual total dimension.
In view of Corollary \ref{cor:cat gen BCC}, this proves Theorem \ref{mainthm:hypersurface}.

\begin{thm}\label{main theorem 3}

Let $p: X \to S$ as in Theorem \ref{mainthm:hypersurface}.
For $f:X\to X$ an $S$-endomorphism, denote by $\Gamma_f=(\id,f)_*\CO_X$ the structure sheaf of the graph of $f$.
Then
$$
[[\Delta_X,\Gamma_f]]_S
=
\Tr_{\Qell}
\bigt{(f_s)_*,\uH^*_{\et}(X_s,\Qell)}
-
\Tr_{\Qell}
\bigt{(f_{\bareta})_*,\uH^*_{\et}(X_{\bareta},\Qell)}.
$$
In particular, for $f=\id$, we obtain
$$
\Bl(X/S)=\chi(X_s;\Qell)-\chi(X_{\bareta};\Qell)
=
-\dimtot \bigt{\uH^*_{\et}(X_s,\Phi_{X/S})}.
$$
\end{thm}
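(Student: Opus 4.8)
The plan is to pass to the $\ell$-adic realization, recognize the left-hand side as a non-commutative trace computed in $\rl_S(\sB)$-modules, and then identify that trace with an honest $\ell$-adic trace on the cohomology of the two fibers; the unipotence hypothesis will enter only at the last point.

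\emph{Step 1: reduction to a non-commutative trace.} By Theorem~\ref{thm: KS loc int prod = HK_0 int with the diagonal} and Proposition~\ref{prop: KS loc int prod = HK_0 int with the diagonal} we have $[[\Delta_X,\Gamma_f]]_S=\int_{X/S}[\Gamma_f]$ in $\HK_0^{\bbQ}\simeq\bbQ$, and, as in the proof of Corollary~\ref{cor:cat gen BCC} (where the $\ell$-adic Chern character of $\MF(S,0)_s$ is the inclusion $\bbQ\hto\Qell$), this integer is the element of $\Qell$ classified by the $\ell$-adic integration map applied to $\rl_S(\Gamma_f):\rl_S(\Perf(S))\to\rl_S(\Sing(X\times_S X))$. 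By Remark~\ref{rem: endomorphism induced by push and pull on T}, $\Gamma_f$ is the composite $\Perf(S)\xto{\coev}\sT^{\op}\otimes_{\sB}\sT\xto{\id\otimes (f_s)_*}\sT^{\op}\otimes_{\sB}\sT\xto{\ffF}\Sing(X\times_S X)$. Applying $\rl_S$, using that $\rl_S(\coev)$ refines to $\coev_{\rl_S(\sT)}$ (\cite[Proof of Theorem~5.2.2]{tv22}; cf.\ \eqref{eqn:aux1} and \eqref{eqn:candidate-coev-for-rl}), that $\rl_S(\ffF)$ is the equivalence of Theorem~\ref{thm:TV}, and then Proposition~\ref{prop:int-vs-ev}, one finds that $[[\Delta_X,\Gamma_f]]_S$ is classified by the composite of $\coev_{\rl_S(\sT)}$, $\id\otimes\rl_S((f_s)_*)$ and $\ev_{\rl_S(\sT)}$ (all three arrows between $\rl_S(\sB)$ and $\rl_S(\sT^{\op})\otimes_{\rl_S(\sB)}\rl_S(\sT)$), followed by $\int_{s/S}:\rl_S(\sB)\to\rl_S(\MF(S,0)_s)$. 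By Proposition~\ref{prop:l-adic realization sing is dualizable over B} and Definition~\ref{defn: non-commutative trace} — using that $\rl_S(\sB)$ is commutative, so that $\HH_*(\rl_S(\sB)/\rl_S(A))\to\rl_S(\sB)$ — the first three arrows compose to the non-commutative trace $\Tr_{\rl_S(\sB)}\bigl(\rl_S((f_s)_*):\rl_S(\sT)\bigr)\in\pi_0\rl_S(\sB)\simeq\Qell$; equivalently, by compatibility of $\rl_S$ with non-commutative traces, this is $\rl_S$ of $\Tr_{\sB}\bigl((f_s)_*:\sT\bigr)\in\HH_*(\sB/A)$. Thus
$$
[[\Delta_X,\Gamma_f]]_S=\int_{s/S}\Bigl(\Tr_{\rl_S(\sB)}\bigl(\rl_S((f_s)_*):\rl_S(\sT)\bigr)\Bigr)\in\Qell .
$$

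\emph{Step 2: computing the $\ell$-adic trace.} Here I would invoke the \cite{brtv18} computation already used in the proof of Lemma~\ref{lem: left square tech-prop}: $\rl_S(\sT)=\rl_S(\Sing(X_s))\simeq (i_S)_*(p_s)_*\scrV_{X/S}^{\In}[-1]$ as a module over $\rl_S(\sB)\simeq (i_S)_*\Ql{s}^{\In}(\beta)$, where $\scrV_{X/S}^{\In}[-1]$ is, up to the Tate-twist trivialization of Section~\ref{ssec: cat dimtot}, the $\ell$-adic vanishing-cycles complex $\Phi_{X/S}[-1]$ together with its inertia action, and $\rl_S((f_s)_*)$ is induced by $(f_s)_*$ on $\Phi_{X/S}$. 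Since the singular locus $Z$ is proper over $s$, $(p_s)_*\Phi_{X/S}$ is a perfect $\Qell$-complex and $\rl_S(\sB)$ is dualizable over $\rl_S(A)=\Ql{S}(\beta)$. One then has to check that $\int_{s/S}\bigl(\Tr_{\rl_S(\sB)}(\rl_S((f_s)_*))\bigr)$ — a relative trace over $\rl_S(\sB)$ followed by the degree map — equals the ordinary graded $\ell$-adic trace of $(f_s)_*$ on $\uH^*_{\et}(X_s,\Phi_{X/S}[-1])$, i.e.\ $-\Tr_{\Qell}\bigl((f_s)_*,\uH^*_{\et}(X_s,\Phi_{X/S})\bigr)$; this is exactly where unipotence is used, as it forces the Swan contribution to vanish, so that the relative trace over the tame algebra $\rl_S(\sB)$ collapses onto the naive trace over $\Qell$. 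The vanishing-cycles triangle $i^*\Ql{X}\to\Psi_{X/S}\to\Phi_{X/S}\xto{+1}$, together with $\uH^*_{\et}(X_s,\Psi_{X/S})\simeq\uH^*_{\et}(X_{\bareta},\Qell)$, then rewrites this as $\Tr_{\Qell}\bigl((f_s)_*,\uH^*_{\et}(X_s,\Qell)\bigr)-\Tr_{\Qell}\bigl((f_{\bareta})_*,\uH^*_{\et}(X_{\bareta},\Qell)\bigr)$, which is the first assertion. For $f=\id$, since $\Gamma_{\id}=\Delta_X$ this reads $\Bl(X/S)=\chi(X_s;\Qell)-\chi(X_{\bareta};\Qell)$; as $\Sw\bigl(\uH^*_{\et}(X_s,\Phi_{X/S})\bigr)=0$ by unipotence and $\chi\bigl(\uH^*_{\et}(X_s,\Phi_{X/S})\bigr)=\chi(X_{\bareta};\Qell)-\chi(X_s;\Qell)$ by the same triangle, this equals $-\dimtot\bigl(\uH^*_{\et}(X_s,\Phi_{X/S})\bigr)$. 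Combined with Corollary~\ref{cor:cat gen BCC} this yields $\dimtot^{\cat}(X/S)=\dimtot(X/S)$, hence Theorem~\ref{mainthm:hypersurface}.

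\emph{Main obstacle.} The substantive point is Step~2. Concretely one must (a) make the \cite{brtv18} equivalence $\rl_S(\Sing(X_s))\simeq (i_S)_*(p_s)_*\scrV_{X/S}^{\In}[-1]$ and its $\rl_S(\sB)$-module structure fully explicit, keeping exact track of shifts and Tate twists; and (b) establish the ``base change for traces'' statement that, under the unipotence hypothesis, pushing the $\rl_S(\sB)$-linear trace of $\rl_S((f_s)_*)$ down to $\Qell$ via $\int_{s/S}$ reproduces the graded $\ell$-adic trace of $(f_s)_*$ on $\uH^*_{\et}(X_s,\Phi_{X/S})$ with no wild correction. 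Part (b) is precisely the step that breaks without unipotence — there the Swan conductor reappears, the analysis of the monodromy-equivariant trace is genuinely more delicate, and the corresponding identification is deferred to \cite{bp24}.
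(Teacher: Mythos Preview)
Your strategy is correct and matches the paper's proof essentially step for step: reduce to $\int_{X/S}[\Gamma_f]$ via Theorem~\ref{thm: KS loc int prod = HK_0 int with the diagonal}, pass through the $\ell$-adic Chern character, use Remark~\ref{rem: endomorphism induced by push and pull on T} and Proposition~\ref{prop:int-vs-ev} to recognize the result as the non-commutative trace $\Tr_{\rl_S(\sB)}\bigl(\rl_S((f_s)_*):\rl_S(\sT)\bigr)$, and then identify $\rl_S(\sT)$ with inertia-invariant vanishing cycles via \cite{brtv18}. Your final rewriting via the nearby/vanishing triangle is more explicit than the paper, which stops at $\Tr_{\Qell}\bigl((f_s)_*,\uH^*_{\et}(X_s,\Phi_{X/S}[-1])\bigr)$ and leaves that step implicit.

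The one place where your account is imprecise is the mechanism behind your ``main obstacle'' (b). You phrase it as ``unipotence forces the Swan contribution to vanish, so the relative trace collapses to the naive trace''. That is the phenomenology, not the argument. The actual tool the paper invokes is \cite[Lemma~5.2.5]{tv22}: on complexes with \emph{unipotent} $\In$-action, the functor $(-)^{\In}$ is symmetric monoidal, and therefore it preserves duality data and traces. This is exactly what turns the $\rl_S(\sB)$-linear trace of $\rl_S((f_s)_*)$ on $\rl_S(\sT)\simeq (i_S)_*\uH^*_{\et}(X_s,\scrV_{X/S})^{\In}[-1]$ into the ordinary $\Qell$-linear trace of $(f_s)_*$ on $\uH^*_{\et}(X_s,\Phi_{X/S})[-1]$. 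The vanishing of the Swan conductor is a \emph{consequence} of unipotence, used only in the very last line to identify $\dimtot$ with the Euler-characteristic difference; it is not what makes the trace comparison go through. Once you insert this lemma, your Step~2 is complete and no ``base change for traces'' argument beyond it is needed.
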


The remaining part of Section \ref{ssec:proof of Thm B} is devoted to the proof of this theorem.

\sssec{}

Proposition \ref{prop:int-vs-ev} implies that the diagram
\begin{equation} 
\nonumber
\begin{tikzpicture}[scale=1.5]
\node (00) at (.5,0) {$\uH^0_{\et}\bigt{S,\rl_S(\B)}$} ;
\node (10) at (4,0) {$\uH^0_{\et}\Bigt{S,\rl_S\bigt{\MF(S,0)_s}}$ };
\node (01) at (.5,1) {$\uH^0_{\et}\bigt{S,\rl_S(\T^{\op} \otimes_{\B} \T)}$};
\node (m31) at (-3,1) {$\uH^0_{\et}\bigt{S,\rl_S(\T^{\op})\otimes_{\rl_S(\B)}\rl_S(\T)}$};
\node (11) at (4,1){$ \uH^0_{\et}\Bigt{S,\rl_S\bigt{\Sing(X \times_S X)}} $};
\path[->,font=\scriptsize,>=angle 90]
(00.east) edge node[above] {$\simeq$}  (10.west); 
\path[->,font=\scriptsize,>=angle 90]
(01.east) edge node[above] {$ $} (11.west); 
\path[->,font=\scriptsize,>=angle 90]
(m31.east) edge node[above] {$ \simeq $} (01.west); 
\path[->,font=\scriptsize,>=angle 90]
(01.south) edge node[left] {$ {} $} (00.north);
\path[->,font=\scriptsize,>=angle 90]
(11.south) edge node[right] {$ \int_{X/S}  $} (10.north);
\end{tikzpicture}
\end{equation}
is commutative, where the composition 
$$ \uH^0_{\et}\bigt{S,\rl_S(\T^{\op})\otimes_{\rl_S(\B)}\rl_S(\T)} \to \uH^0_{\et}\bigt{S,\rl_S(\B)}
$$
is, by definition, the map induced by $\ev_{\rl_S(\T)}$.

\sssec{}

By Remark \ref{rem: endomorphism induced by push and pull on T}, the map $(\id \otimes (f_s)_*)\circ \coev_{\rl_S(\T)}$
corresponds to the cohomology class
$$
\chern_{S}([\Gamma_f]) \in \uH^0_{\et}\Bigt{S,\rl_S\bigt{\Sing(X \times_S X)}}. 
$$
In particular, we find that 
\begin{equation}\label{eqn: int Gamma_f = trace r(T)}
\int_{X/S} \Bigt{ \chern_{S}([\Gamma_f]) }= \Tr_{(i_S)_*\Qell(\beta)}\bigt{(f_s)_*,\rl_S(\T)},
\end{equation}
as elements of $\uH^0_{\et}\Bigt{S,\rl_S\bigt{\MF(S,0)_s}}
\simeq \Qell$.

\sssec{}

The main theorem in \cite{brtv18} yields
$$
  \rl_S(\T)\simeq (i_S)_*\uH^*_{\et} (X_s,\scrV_{X/S})^{\In}[-1],
$$
that is, the $\ell$-adic realization of $\T$ recovers inertia-invariant vanishing cycles.
Moreover, by \cite[Lemma 5.2.5]{tv22}, taking fixed points with respect to $\In$ behaves as a symmetric monoidal functor when applied to complexes with unipotent action. 
Thus,
\begin{equation}\label{eqn: trace r(T)= trace Phi}
\Tr_{(i_S)_*\Qell(\beta)}\bigt{(f_s)_*,\rl_S(\T)} =  \Tr_{\Qell}\bigt{(f_s)_*,\uH^*_{\et}(X_s,\Phi_{X/S})[-1]}.                             
\end{equation}

\sssec{}

On the other hand,
\begin{equation}\label{eqn: int chern Gamma_f = chern int Gamma_f}
\int_{X/S}\Bigt{\chern_{S}([\Gamma_f])}= \chern_{S}\Bigt{\int_{X/S}[\Gamma_f]}
\end{equation}
and the map
$$
  \chern_{S}: \bbQ
  \simeq 
  \HK^{\bbQ}_0\bigt{\MF(S,0)_s}
  \to 
  \uH^0_{\et}\Bigt{S,\rl_S\bigt{\MF(S,0)_s}}
  \simeq 
  \Qell
$$
is just the inclusion of the rational numbers into the $\ell$-adic rational numbers.
We will implicitly compose with such a map in the computation that follows.

\sssec{}

Summarizing all the steps above, we have obtained the following chain of equalities:
\begin{align*}
[[\Delta_X,\Gamma_f]]_S & =  \int_{X/S}[\Gamma_f] && \text{Theorem \ref{thm: KS loc int prod = HK_0 int with the diagonal}}\\
                                     & =  \int_{X/S} \Bigt{\chern_{S}([\Gamma_f])} && \text{\eqref{eqn: int chern Gamma_f = chern int Gamma_f}}\\
                                     & = \Tr_{(i_S)_*\Qell(\beta)}(\rl_S((f_s)_*);\rl_S(\T)) && \text{\eqref{eqn: int Gamma_f = trace r(T)}}\\
                                     & =  \Tr_{\Qell}\bigt{(f_s)_*,\uH^*_{\et}(X_s,\Phi_{X/S}[-1])}. && \text{\eqref{eqn: trace r(T)= trace Phi}}
\end{align*}

\sec{The pure characteristic case}\label{sec:pure-char}

The goal of this section is to prove Theorem \ref{mainthm:pure-char}. The strategy is very similar to the one employed for Theorem \ref{mainthm:hypersurface}. The only difference is that the categorification of the Kato--Saito localized intersection product can be done canonically (that is, we do not need the presentation of $X$ as a hypersurface).

\ssec{Intersection with the diagonal in the pure characteristic case}

\sssec{}

Assume that $S$ is of pure characteristic and let $p:X\to S$ be as in the statement of the generalized Bloch conductor formula.
The arrow $S\to s$ (a feature of the pure characteristic case) lets us consider the following diagram, where both squares are Cartesian:
\begin{equation*}
        \begin{tikzpicture}[scale=1.5]
            \node (02) at (0,2) {$K(X,0)$};
            \node (12) at (2,2) {$X$};
            \node (01) at (0,1) {$X\times_SX$};
            \node (11) at (2,1) {$X\times_sX$};
            \node (00) at (0,0) {$S$};
            \node (10) at (2,0) {$S\times_sS.$};
            \draw[->] (02) to node[above] {$$} (12);
            \draw[->] (01) to node[above] {$$} (11);
            \draw[->] (00) to node[above] {$\delta_S$} (10);
            \draw[->] (02) to node[left] {$d\delta_X$} (01);
            \draw[->] (12) to node[above] {$$} (11);
            \draw[->] (01) to node[above] {$$} (00);
            \draw[->] (11) to node[right] {$$} (10);
        \end{tikzpicture}
    \end{equation*}
Similarly to the hypersurface case, we have a factorization of the diagonal
$$
  \delta_X:
  X
  \xto{\iota}
  K(X,0)
  \xto{d\delta}
  X\times_SX,
$$
so that $d\delta_X$ is a \emph{derived} enhancement of the diagonal.

\sssec{}

Notice that, since $X$ is smooth over $s$, the diagonal $X\to X\times_sX$ is quasi-smooth. 
Therefore, $d\delta_X$ is a quasi-smooth closed embedding and so it induces a dg-functor
$$
    d\delta_X^*:
    \Sing(X\times_SX)
    \to
    \Sing \bigt{K(X,0)}.
$$
The proof of Lemma \ref{lem: pullback along derived pullback factors through MF with support on the singular locus} works in this case too, \emph{mutatis mutandis}. Hence, just as in the hypersurface case, this dg-functor factors through
\begin{equation} \label{eqn:d-delta-in-pure char}
    d\delta_X^*:
    \Sing(X\times_SX)
    \to
    \Sing \bigt{K(X,0)}_Z.
\end{equation}
Recall that $Z$ denotes the singular locus of $X \times_S X$.

\sssec{}

The next step is simpler than the corresponding one for Theorem \ref{mainthm:hypersurface}. Namely, the push-forward along $K(X,0) \to K(S,0)$ induces a dg-functor
$$
  \Sing \bigt{K(X,0)}_Z
  \to
  \Sing \bigt{K(S,0)}_s.
$$
Pre-composing with \eqref{eqn:d-delta-in-pure char}, we obtain a dg-functor
$$
  \int_{X/S}^{\on{dg}}:
  \Sing(X\times_SX)
  \to
  \Sing \bigt{K(S,0)}_s.
$$

\begin{prop}
Let
\begin{equation*}
    [\Delta_X,-]: \uG_0(X\times_SX) \to \HK_0\bigt{\Sing(X\times_S X)} \rightarrow \HK_0\bigt{\MF(X,0)_Z}\simeq \uG_0(Z)
\end{equation*}   
denote the morphism induced by $d\delta_X^*$. Then
\begin{equation*}
    [\Delta_X,-]=[[X,-]]_{X\times_S X},  
\end{equation*}
where the right-hand side denotes the localized intersection product defined by Kato--Saito in \cite[Definition 5.1.5]{ks04}).
\end{prop}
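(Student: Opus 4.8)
The plan is to transcribe the proof of Proposition~\ref{prop: KS loc int prod = HK_0 int with the diagonal}, replacing the line bundle $L$ everywhere by the trivial line bundle $\O_X$: thus $K(X,L,0)$ is replaced by $K(X,0)=X\times_{\bbA^1_X}X$, the full subcategory $\MF(X,L,0)_Z$ by $\MF(X,0)_Z$, and the Orlov-type equivalence $\Xi:\Sing\bigt{K(X,0)}\xto{\simeq}\MF(X,0)$ of Section~\ref{sssec:Orlov} is the one attached to the zero potential with trivial twist. As already recorded in Section~\ref{sec:pure-char}, the two squares of the relevant diagram are Cartesian, the map $d\delta_X$ is a quasi-smooth closed immersion and a derived enhancement of $\delta_X$, and the argument of Lemma~\ref{lem: pullback along derived pullback factors through MF with support on the singular locus} applies \emph{mutatis mutandis}; in particular $d\delta_X^*$ lands in $\Sing\bigt{K(X,0)}_Z$, so that $[\Delta_X,-]$ is well defined.

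First I would isolate the input from \cite[\S 5.1]{ks04}: for $n\gg 0$ the coherent sheaves $\ul\Tor_n^{X\times_SX}(\Delta_X,E)$ are supported on $Z$ and eventually $2$-periodic,
\[
\ul\Tor_n^{X\times_SX}(\Delta_X,E)\simeq\ul\Tor_{n-2}^{X\times_SX}(\Delta_X,E)\qquad(n\gg 0).
\]
In contrast with the setting of Theorem~\ref{mainthm:hypersurface}, here the periodicity carries \emph{no twist}: the immersion $X\times_SX\hookrightarrow X\times_sX$ is the base change of the codimension-one regular immersion $\delta_S:S\hookrightarrow S\times_sS$, cut out by the image of $\pi\otimes 1-1\otimes\pi$ for $\pi$ a uniformizer of $A$. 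Hence $X\times_SX$ is \emph{globally} a hypersurface in the regular scheme $X\times_sX$, cut out by a global function (equivalently, $K(X,0)$ is the derived self-intersection of the zero section of the \emph{trivial} line bundle over $X$), and the periodicity used in the proof of Proposition~\ref{prop: KS loc int prod = HK_0 int with the diagonal} specializes to the above untwisted statement.

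Then I would run that computation verbatim. The factorization $\delta_X\simeq d\delta_X\circ\iota$ gives $\delta_{X*}\delta_X^*E\simeq d\delta_{X*}\bigt{d\delta_X^*E\otimes\iota_*\O_X}$, and, writing $\psi:K(X,0)\to X$ for the projection, one resolves $\iota_*\O_X$ by the Koszul complex
\[
\Bigl[\ \cdots\xto{\epsilon}\psi^*\O_X[2]\xto{\epsilon}\psi^*\O_X[1]\xto{\epsilon}\O_{K(X,0)}\ \Bigr],
\]
so that $d\delta_X^*E\otimes\iota_*\O_X$ is the totalization of $\bigl[\,\cdots\xto{h}d\delta_X^*E[2]\xto{h}d\delta_X^*E[1]\xto{h}d\delta_X^*E\,\bigr]$. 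This totalization is eventually $2$-periodic, and discarding its first $2n$ terms ($n\gg 0$) produces exactly the $2$-periodic object $\Xi(d\delta_X^*E)\in\MF(X,0)_Z$, whose cohomology sheaves satisfy $\ccH^0\bigl(\Xi(d\delta_X^*E)\bigr)\simeq\ccH^{-2n}(\delta_{X*}\delta_X^*E)=\ul\Tor_{2n}^{X\times_SX}(\Delta_X,E)$ and $\ccH^1\bigl(\Xi(d\delta_X^*E)\bigr)\simeq\ccH^{-2n-1}(\delta_{X*}\delta_X^*E)=\ul\Tor_{2n+1}^{X\times_SX}(\Delta_X,E)$. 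Passing to $\HK_0$ and using $\HK_0\bigt{\MF(X,0)_Z}\simeq\uG_0(Z)$, this gives
\[
[d\delta_X^*E]=\bigl[\ul\Tor_{2n}^{X\times_SX}(\Delta_X,E)\bigr]-\bigl[\ul\Tor_{2n+1}^{X\times_SX}(\Delta_X,E)\bigr],
\]
which is precisely the Kato--Saito class $[[X,E]]_{X\times_SX}$ of \cite[Definition~5.1.5]{ks04}; this proves the proposition.

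No genuinely new difficulty arises, the argument being a transcription. The points deserving care are: (i) that the role played by $L$ and its section $s\in\uH^0(P,L)$ in the proof of Proposition~\ref{prop: KS loc int prod = HK_0 int with the diagonal} is now played by the trivial bundle and the pullback of a uniformizer of $A$, so that the Koszul resolution and the ensuing periodicity come out untwisted; and (ii) that the Cartesian diagram of Section~\ref{sec:pure-char} genuinely exhibits $K(X,0)$ as the derived hypersurface $X\times_{\bbA^1_X}X$ and $d\delta_X$ as a quasi-smooth derived enhancement of $\delta_X$ (which relies on $X$ being smooth over $s$, as noted there). If anything, the only obstacle is bookkeeping: checking that $\Xi$ is normalized so that the cohomology sheaves of $\Xi(d\delta_X^*E)$ are the stable Tor-sheaves on the nose, with no residual shift or Tate twist.
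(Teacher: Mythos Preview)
Your proposal is correct and takes essentially the same approach as the paper: the paper's proof simply says that the argument of Proposition~\ref{prop: KS loc int prod = HK_0 int with the diagonal} (via Theorem~\ref{thm: KS loc int prod = HK_0 int with the diagonal}) applies verbatim with the trivial line bundle, and you have carried out that transcription in detail. The one remark the paper adds, which you leave implicit, is that because the twist is trivial one may invoke \cite[\S3]{brtv18} in place of \cite[\S3]{pi22} for the identification $\HK_0\bigt{\MF(X,0)_Z}\simeq\uG_0(Z)$, which is why no tensoring with $\bbQ$ is needed here.
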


\begin{proof}
    The proof of Theorem \ref{thm: KS loc int prod = HK_0 int with the diagonal} works in this case as well. 
    Since the line bundle is trivial in this case, we don't need to invoke \cite[\S3]{pi22}: we invoke \cite[\S3]{brtv18} instead, so that we don't need to tensor with $\bbQ$.
\end{proof}

\begin{cor}\label{cor: nc interpretation Bloch number pure char}
The dg-functor $\int_{X/S}^{\on{dg}}: \Sing(X\times_SX) \to \Sing \bigt{K(S,0)}_s$
induces the Kato-Saito localized intersection product
$$
  \uG_0(X\times_SX) 
  \to 
  \uG_0(s)
  \simeq
  \bbZ
$$
$$
  [E]
  \mapsto
  [[\Delta_X,E]]_S.
$$
In particular, we have that $\int_{X/S}[\Delta_X]=\Bl(X/S)$.
\end{cor}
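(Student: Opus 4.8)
The plan is to reduce the statement to the Proposition just proven together with the defining formula for the Kato--Saito localized intersection product, the only additional ingredient being the behaviour of $\uG$-theory under proper pushforward.

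First I would recall that, by the Proposition above, the dg-functor $d\delta_X^*\colon \Sing(X\times_SX)\to\Sing(K(X,0))_Z$ induces on $\HK_0$, under the identification $\HK_0(\Sing(K(X,0))_Z)\simeq\uG_0(Z)$ (Orlov's equivalence of Section \ref{sssec:Orlov} together with \cite[\S3]{brtv18}, so that no rationalization is needed), the map $[E]\mapsto[[X,E]]_{X\times_SX}\in\uG_0(Z)$ of \cite[Definition 5.1.5]{ks04}. Next I would analyse the pushforward dg-functor $g_*\colon\Sing(K(X,0))_Z\to\Sing(K(S,0))_s$ induced by the morphism $g\colon K(X,0)\to K(S,0)$. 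Since $K(X,0)$ and $K(S,0)$ are derived thickenings of $X$ and $S$ respectively and the support conditions are the topological ones, $g$ restricted to the locus supported over $Z\subseteq X_s$ — which is proper over $S$ by hypothesis — is proper, so that the ordinary pushforward of coherent sheaves is defined on the relevant subcategories and fits into a commutative square with the projections onto the singularity categories. Decategorifying, and using $\HK_0(\Sing(K(S,0))_s)\simeq\uG_0(s)\simeq\bbZ$, I would check that $\HK_0(g_*)\colon\uG_0(Z)\to\uG_0(s)$ is the proper pushforward $(p_Z)_*$ along $p_Z\colon Z\to s$; this is the same identification as the one used for the map $\wt{\deg}$ in the hypersurface case (see the discussion around \eqref{eqn:natural arrow motives}), and it is the only step with genuine content.

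Combining the two steps, the composite $\int_{X/S}^{\on{dg}}=g_*\circ d\delta_X^*$ induces on $\HK_0$ the assignment $[E]\mapsto(p_Z)_*[[X,E]]_{X\times_SX}$, which is precisely $[[\Delta_X,E]]_S$ by the very definition of the localized intersection product as the degree of the difference of the stable Tor-sheaves (cf. the discussion around \eqref{defn:KS pairing} and Theorem \ref{thm:KS-interpretation of Bl}). This yields the first assertion. For the last assertion I would specialize to $E=\Delta_X=\delta_*\O_X$, so that $\int_{X/S}[\Delta_X]=[[\Delta_X,\Delta_X]]_S$, and invoke the theorem of Kato--Saito recalled in Section \ref{sec:K-intersection-thry}, namely $[[\Delta_X,\Delta_X]]_S=\Bl(X/S)$.

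The hard part will be the second step: pinning down $\HK_0(g_*)$ as the classical proper pushforward in $\uG$-theory. This requires the compatibility of the Orlov-type equivalence $\Sing(K(-,0))\simeq\MF(-,0)$ with pushforward along $g$, together with a dévissage argument identifying $\HK_0$ of the relevant singularity categories with $\uG_0$ of the supports; once this is granted, everything else is formal bookkeeping, exactly as in the proof of Theorem \ref{thm: KS loc int prod = HK_0 int with the diagonal}.
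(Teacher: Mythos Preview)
Your proposal is correct and follows the same line as the paper, which in fact gives no separate proof of this corollary: it is meant to be read as an immediate consequence of the preceding Proposition together with the fact that the pushforward along $K(X,0)\to K(S,0)$ induces the degree map $(p_Z)_*\colon \uG_0(Z)\to \uG_0(s)$, exactly as you outline. One small remark: you flag the identification of $\HK_0(g_*)$ with the classical proper pushforward as ``the hard part'', but this is routine---it is the same compatibility already used in the hypersurface case (cf.\ the discussion of $\wt{\deg}$ and \eqref{eqn:natural arrow motives}), and the paper treats it as understood; the genuine content is entirely in the Proposition you are invoking.
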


\ssec{Proof of Theorem \ref{mainthm:pure-char}}

\sssec{}

Recall that the setting of this theorem requires $S$ to have pure characteristic. 
The main extra feature of this case is that the closed embedding $s \hto S$ admits a retraction $S \tto s$. As a consequence, the convolution monoidal structure on $\sB$ is \emph{symmetric}. In fact, by Koszul duality, 
$$
\sB \simeq  \Perf\bigt{k[u,u^{-1}]},
$$ 
where $u$ is a free variable of degree $2$. 
Under this equivalence, the symmetric monoidal on $\sB$ corresponds to the canonical symmetric monoidal structure on $\Perf\bigt{k[u,u^{-1}]}$.

\begin{rem}
The above equivalence can alternatively be written as 
$$
\sB \simeq \MF(s,0),
$$
again with the usual symmetric monoidal structure on the right-hand side.
\end{rem}

\sssec{}

Since $\sB$ is symmetric monoidal, the structure morphism $\sB \to \HH_*(\sB/A)$ admits a retraction
$$
  \on{m}:
  \HH_*(\sB/A)
  \to
  \sB.
$$
We will make essential use of this morphism in the computations below.

\sssec{}

Consider the canonical morphism
$$
\Sing(X \times_S X)
\simeq
\sT^{\op}
\otimes_{\sB}
\sT
\to \HH_*(\sB/A)
$$
and the composition
\begin{equation} \label{eqn:from Sing(XSX) to B in geom case}
\Sing(X \times_S X)
\simeq
\sT^{\op}
\otimes_{\sB}
\sT
\to \HH_*(\sB/A)
\xto{\on{m}}
\sB.
\end{equation}
Taking $\HK_0$, we obtain arrows
\begin{equation} \label{eqn:pure-char--degree map}
\uG_0(X \times_S X)
\to 
\HK_0 \bigt{\Sing(X \times_S X)}
\to 
\HK_0(\sB) 
\simeq 
\bbZ. 
\end{equation}
We now show that map computes the difference of the stable Tors.
\begin{prop} \label{prop:stable-Tors-pure-char}
The above map \eqref{eqn:pure-char--degree map} sends $[E] \in \uG_0(X \times_S X)$ to 
$$
\deg 
\big[
 \on{\ul{sTor}}_{0}^{X \times_S X}(\Delta_X, E)
\big]
-
\deg 
\big[
 \on{\ul{sTor}}_{1}^{X \times_S X}(\Delta_X, E)
\big].
$$
\end{prop}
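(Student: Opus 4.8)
The plan is to factor the map \eqref{eqn:pure-char--degree map} through the derived‑diagonal pullback $d\delta_X^*$, thereby reducing the statement to the Koszul‑resolution computation already carried out in the proof of Proposition \ref{prop: KS loc int prod = HK_0 int with the diagonal}. Concretely, the first step is to identify the composition \eqref{eqn:from Sing(XSX) to B in geom case}, as a dg‑functor, with
$$
\Sing(X\times_S X)\xto{d\delta_X^*}\Sing\bigt{K(X,0)}_Z\longto\Sing\bigt{K(S,0)}_s,
$$
the second arrow being the pushforward along $K(X,0)\to K(S,0)$; here one uses that in the pure characteristic case the retraction $S\tto s$ makes $\sB$ symmetric monoidal and identifies $\sB\simeq\MF(s,0)$ with the target $\Sing\bigt{K(S,0)}_s\simeq\MF(S,0)_s$, compatibly with the isomorphism $\HK_0\simeq\bbZ$ on either side (both being $\HK_0\bigt{\Coh(s)}$). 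Granting this identification, the induced map on $\HK_0$ sends $[E]$ to the image of $[d\delta_X^*E]\in\HK_0\Bigt{\Sing\bigt{K(X,0)}_Z}\simeq\uG_0(Z)$ under the degree map $\uG_0(Z)\to\uG_0(s)\simeq\bbZ$, which is well defined since $Z/s$ is proper.

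It then remains to recall, from the proof of Proposition \ref{prop: KS loc int prod = HK_0 int with the diagonal} — carried out here with the trivial line bundle, so that no rationalization is needed and one invokes \cite[\S3]{brtv18} in place of \cite[\S3]{pi22} — that resolving $\iota_*\O_X$ by the eventually two‑periodic Koszul complex and applying Orlov's equivalence $\Xi$ exhibits $[d\delta_X^*E]$ as
$$
\big[\ul{\mathrm{sTor}}_0^{X\times_S X}(\Delta_X,E)\big]-\big[\ul{\mathrm{sTor}}_1^{X\times_S X}(\Delta_X,E)\big]\in\uG_0(Z),
$$
where $\ul{\mathrm{sTor}}_j^{X\times_S X}(\Delta_X,E)$ denotes the stabilized value of $\ul\Tor_{2n+j}^{X\times_S X}(\Delta_X,E)$ for $n\gg0$. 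Taking degrees yields the claimed formula. One could also route the argument through Corollary \ref{cor: nc interpretation Bloch number pure char} together with Theorem \ref{thm:KS-interpretation of Bl}, but the above is more self‑contained.

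The genuinely delicate point is the dg‑functor identification of the first step, namely that $\on{m}\circ\ev^{\HH}\circ\ffF^{-1}$ coincides with $d\delta_X^*$ followed by the $K(X,0)\to K(S,0)$‑pushforward. Since $\sB$ is symmetric monoidal, $\on{m}$ is a retraction of the structure map $\sB\to\HH_*(\sB/A)$, so $\on{m}\circ\ev^{\HH}$ is simply the canonical arrow $\sT^{\op}\otimes_\sB\sT\to\sB$ built from the evaluation $\ev$ of the explicit duality datum of Section \ref{ssec: explicit duality datum}. Plugging in the geometric formula $\wt\ev=r_*q^*(-\boxtimes_S\bbD-)$ for $\ev$ and the description $\wt\ffF\colon(E,F)\mapsto j_*(\bbD E\boxtimes_s F)$ of $\ffF$ from Theorem \ref{thm:TV}, one rewrites the composite as a pull‑and‑push over $X_s\times_X X_s$; the Cartesian squares relating $K(X,0)$, $X\times_S X$ and $K(S,0)$ displayed at the beginning of this section, combined with repeated base change and the projection formula (and the fact that every object of $\Sing\bigt{K(X,0)}_Z$ is supported in the special fiber), then let one recognize this as $d\delta_X^*$ followed by the pushforward. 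This is a diagram chase of exactly the same nature as Proposition \ref{prop: comm diag for ev} and its corollaries, specialized to the trivial line bundle; I expect it to be the main obstacle, the remaining steps being routine.
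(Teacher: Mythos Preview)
Your proposal is correct and follows essentially the same route as the paper. The paper packages the key identification as a separate lemma asserting the commutativity of the square
\[
\begin{tikzcd}
\Sing(X\times_S X) \ar[r,"\int^{\on{dg}}_{X/S}"] \ar[d,"\ev^{\HH}_{\sT/\sB}"'] & \MF(S,0)_s \ar[d] \\
\HH_*(\sB/A) \ar[r,"\on m"] & \sB \simeq \MF(s,0)
\end{tikzcd}
\]
and proves it by first observing that both circuits are $\sB$-linear, so one may compare them in $\Hom_{\sB^{\env}}(\sT\otimes_A\sT^{\op},\sB)$ after restricting along $\sT\otimes_A\sT^{\op}\to\sT^{\op}\otimes_\sB\sT$; the rest is exactly the diagram chase with base change and projection formula over the fiber squares you describe. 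This $\sB$-linearity reduction is the one organizational trick you did not make explicit, but otherwise your outline matches the paper's argument.
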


The proof is obtained by looking at the two circuits of the diagram below.

\begin{lem}
The diagram
\begin{equation} 
\begin{tikzpicture}[scale=1.5]
\node (M) at (0,1) {$\hspace{2.5cm}\sT^{\op} \otimes_{\sB} \sT \simeq \Sing(X \times_S X) $ };
\node (IM) at (0,0) {$\HH_*(\sB/A) $};
\node (N) at (3.5,1) {$ \MF(S,0)_s$};
\node (IN) at (3.5,0) {$ \sB \simeq \MF(s,0)$};
\path[ ->,font=\scriptsize,>=angle 90]
(M.south) edge node[right] { $\ev^{\HH}_{\sT/\sB}$ } (IM.north);
\path[ ->,font=\scriptsize,>=angle 90]
(N.south) edge node[right] { $ $ } (IN.north);
\path[->,font=\scriptsize,>=angle 90]
(M.east) edge node[above]{$\int^{\on{dg}}_{X/S} $} (N.west);
\path[->,font=\scriptsize,>=angle 90]
(IM.east) edge node[above]{$\on{m}$} (IN.west);
\end{tikzpicture}
\end{equation}
is commutative. Here, the right vertical map is induced by the push-forward along $K(S,0) \to K(s,0)$.
\end{lem}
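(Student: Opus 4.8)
The plan is to present each of the two composites as a dg-functor $\Sing(X \times_S X) \to \sB$ — for the left-hand one, transported along the equivalence $\ffF$ of Theorem \ref{thm:TV} — and to identify both with the same geometric operation.

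\emph{Step 1: simplify the left-hand side.} Since $S$ has pure characteristic, $\sB$ is symmetric monoidal, so $\sB^{\rev} \simeq \sB$, and the structure map $\sigma : \sB \to \HH_*(\sB/A)$ admits the canonical retraction $\on{m}$, induced by the multiplication $\sB^{\rev} \otimes_A \sB \to \sB$. A formal check from the definition of $\ev^{\HH}_{\sT/\sB}$ and of $\on{m}$ — which is exactly the statement that $\on{m}$ undoes the passage from the honest evaluation to the Hochschild-valued one — shows that
\[
\on{m} \circ \ev^{\HH}_{\sT/\sB} \;\simeq\; \overline{\ev} \circ \tau : \quad \sT^{\op} \otimes_{\sB} \sT \longto \sB,
\]
where $\tau$ is the symmetry isomorphism $\sT^{\op} \otimes_{\sB} \sT \simeq \sT \otimes_{\sB} \sT^{\op}$ and $\overline{\ev} : \sT \otimes_{\sB} \sT^{\op} \to \sB$ is the descent of the $(\sB^{\rev} \otimes_A \sB)$-linear evaluation $\ev : \sT \otimes_A \sT^{\op} \to \sB$ of the explicit duality datum of Proposition \ref{prop: explicit duality datum for T/S}. (Equivalently, under $\ffF$ this is the categorical trace of the convolution endofunctor $\phi(-,E)$ of Lemma \ref{lem: wt phi comp diag = id}, and the lemma is the corresponding Atiyah--Bott/Lefschetz identity in the present derived-categorical setting; but we will not need this reformulation.) This is the point at which the pure-characteristic hypothesis is used decisively, since it produces both the symmetric monoidal structure on $\sB$ and the retraction $\on{m}$.

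\emph{Step 2: reduce to a square of explicit functors.} By generic smoothness of $p$ the singular locus of $X \times_S X$ lies in $X_s \times_s X_s$, so $\Sing(X \times_S X) = \Sing(X \times_S X)_{X_s \times_s X_s}$ (as in the proof of Lemma \ref{lem: wt phi comp diag = id}), and the latter is Karoubi-generated by the image of the functor $\wt{\ffF} : (E,F) \squigto j_*(\bbD E \boxtimes_s F)$, $j : X_s \times_s X_s \hto X \times_S X$, that induces $\ffF$ via the presentation $\sT^{\op} \otimes_{\sB} \sT \simeq \Coh(X_s)^{\op} \otimes_{\sB^+} \Coh(X_s)$ of Section \ref{ssec: explicit duality datum}. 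Recalling that $\ev$ descends from $\wt{\ev} : (E,F) \squigto r_* q^*(E \boxtimes_S \bbD F)$ (with $q : X_s \times_X X_s \to X_s \times_S X_s$ and $r : X_s \times_X X_s \to G$ as in Section \ref{sssec:notations}), and that $\int^{\on{dg}}_{X/S}$ is $d\delta_X^*$ followed by push-forward along $K(X,0) \to K(S,0)$, Step 1 reduces the lemma to the commutativity of the square of explicit functors obtained by precomposing with $\wt{\ffF}$; this commutativity will be established by constructing the homotopy directly, as a chain of base-change and projection-formula isomorphisms, mimicking the proof of Proposition \ref{prop: comm diag for ev} (where the analogous diagram \eqref{diag:main for sing} was handled this way).

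\emph{Step 3: the geometric comparison.} On $W = \wt{\ffF}(E,F) = j_*(\bbD E \boxtimes_s F)$ one computes the right-hand composite by base change along the Cartesian square presenting $K(X,0) = (X \times_S X) \times_{X \times_s X} X$ from the beginning of Section \ref{sec:pure-char}: the fibre product $K(X,0) \times_{X \times_S X} (X_s \times_s X_s)$ is a derived thickening of the diagonal of $X_s$, so $d\delta_X^* W$ is the push-forward to $K(X,0)$ of a coherent complex on $X_s$ built functorially from $E$ and $F$; pushing further along $K(X,0) \to K(S,0) \to K(s,0)$ and applying the projection formula and base change along the Cartesian squares of Section \ref{sec:pure-char} and Section \ref{sssec:notations} produces the complex $r_* q^*(-)$ computing $\overline{\ev} \circ \tau$ on $\wt{\ffF}(E,F)$ from Step 1, up to the standard identification $\bbD\bbD \simeq \id$ on $\Sing(X_s)$ (valid since $X_s$ is Gorenstein) and the swap $\tau$. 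Since all the intervening identifications are functorial, the two composites agree as functors, and the square commutes.

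\emph{Main obstacle.} The hard part is the bookkeeping in Step 3: tracking the quasi-smooth and derived structures through the tower of Cartesian squares relating $K(X,0)$, $X \times_S X$, $X \times_s X$, $S$ and $K(S,0)$, and making sure that the symmetry isomorphism $\tau$ and the duality $\bbD$ of Step 1 are matched correctly on both sides. On the conceptual side, the only delicate point is the formal identity of Step 1 ($\on{m} \circ \ev^{\HH}_{\sT/\sB} \simeq \overline{\ev} \circ \tau$), which must be verified carefully from the definitions because $\ev^{\HH}$ repackages the duality datum through the relative tensor product.
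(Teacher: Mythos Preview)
Your approach is correct and lands on the same geometric computation as the paper, but the reduction is organized differently. The paper first observes that both circuits are $\sB$-linear (using that $\sB$ is symmetric monoidal in pure characteristic, so $\sT^{\op}\otimes_{\sB}\sT$ and $\HH_*(\sB/A)$ carry $\sB$-actions), and then invokes the tautological equivalence
\[
\Hom_{\sB}\bigl(\sT^{\op}\otimes_{\sB}\sT,\sB\bigr)\;\simeq\;\Hom_{\sB^{\env}}\bigl(\sT\otimes_A\sT^{\op},\sB\bigr)
\]
to reduce to comparing the restrictions along $\sT\otimes_A\sT^{\op}\to\sT^{\op}\otimes_{\sB}\sT$; this replaces your Steps~1--2 in one stroke and avoids having to name the swap $\tau$ or worry about $\bbD\bbD\simeq\id$. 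Your Step~1 identity $\on{m}\circ\ev^{\HH}_{\sT/\sB}\simeq\overline{\ev}\circ\tau$ is the same reduction written out by hand, and your restriction to $\wt{\ffF}$-generators in Step~2 plays the role of the restriction along $\sT\otimes_A\sT^{\op}\to\sT^{\op}\otimes_{\sB}\sT$. From that point both proofs finish by the same diagram chase through the fiber squares
\[
\begin{tikzcd}
K(s,0) \ar[d] & K(X_s,0) \ar[l] \ar[r] \ar[d] & X_s\times_s X_s \ar[d,"j"] \\
K(S,0) & K(X,0) \ar[l] \ar[r,"d\delta_X"'] & X\times_S X
\end{tikzcd}
\]
(the paper writes these out explicitly; you allude to them in Step~3). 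The paper's packaging is cleaner precisely because the $\sB$-linearity observation absorbs the bookkeeping you flag in your ``Main obstacle'' paragraph; on the other hand, your route makes the connection to the explicit $\wt{\ev}=r_*q^*(-)$ of Section~\ref{sssec:notations} more visible.
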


\begin{proof}
Since $\sB$ is symmetric monoidal, the dg-categories $\T^{\op}\otimes_{\sB}\sT$ and $\HH_*(\sB/A)$ admit natural actions of $\sB$. It is clear that the lower circuit of the square is clearly $\sB$-linear. One checks that the upper circuit is $\sB$-linear too. Hence, we are comparing two objects of the dg-category
$$
\Hom_{\sB} 
\bigt{
\T^{\op}\otimes_{\sB}\sT, \sB
}.
$$
Recalling that 
$$ 
\T^{\op}\otimes_{\sB}\sT \simeq 
(\T \otimes_{A}\sT^{\op})
\usotimes_{\sB^{\env}} \sB,
$$
we find a canonical equivalence
$$
\Hom_{\sB} 
\bigt{
\T^{\op}\otimes_{\sB}\sT, \sB
}
\simeq 
\Hom_{\sB^{\env}} 
\bigt{
\T \otimes_A \sT^{\op}, \sB
}
$$
induced by the restriction along the dg-functor $\T^{\op}\otimes_{A}\sT \to \T^{\op}\otimes_{\sB}\sT$. Then the assertion is a diagram chase, using the definitions and the fiber squares
\begin{equation}  \label{diag:big-comm-diagram}
\begin{tikzpicture}[scale=1.5]
\node (L) at (-2,1) {$K(s,0)$ };
\node (IL) at (-2,0) {$K(S,0)$};

\node (M) at (0,1) {$K(X_s,0)$  };
\node (IM) at (0,0) {$K(X,0)$};
\node (N) at (2,1) {$X_s \times_s X_s $};
\node (IN) at (2,0) {$X \times_S X$.};
\path[ ->,font=\scriptsize,>=angle 90]
(L.south) edge node[right] { $ $ } (IL.north);
\path[ ->,font=\scriptsize,>=angle 90]
(M.south) edge node[right] { $i $ } (IM.north);
\path[ right hook ->,font=\scriptsize,>=angle 90]
(N.south) edge node[right] { $j $ } (IN.north);
\path[->,font=\scriptsize,>=angle 90]
(M.east) edge node[above]{$ $} (N.west);
\path[->,font=\scriptsize,>=angle 90]
(IM.east) edge node[above]{$d \delta$} (IN.west);
\path[<-,font=\scriptsize,>=angle 90]
(L.east) edge node[above]{$ $} (M.west);
\path[<-,font=\scriptsize,>=angle 90]
(IL.east) edge node[above]{$ $} (IM.west);
\end{tikzpicture}
\end{equation}
To see that the right square is cartesian, observe that $(X \times_S X) \times_S s \simeq X_s \times_s X_s$ and then use the canonical isomorphism $K(W,0) \times_W W' \simeq K(W',0)$ valid for any map $W' \to W$.
\end{proof}
\sssec{}
We then obtain a commutative diagram
\begin{equation}\label{diag: key comm diagnostics in pure char} 
\begin{tikzpicture}[scale=1.5]
\node (L) at (-3.5,1) {$\rl_S(\sT^{\op}) \otimes_{\rl_S(\sB)} \rl_S(\sT)$ };
\node (IL) at (-3.5,0) {$\rl_S(\sB) $};

\node (M) at (0,1) {$\rl_S\bigt{\Sing(X \times_S X)} $ };
\node (IM) at (0,0) {$\rl_S\bigt{\HH_*(\sB/A)} $};
\node (N) at (3.5,1) {$ \rl_S(\MF(S,0)_s)$};
\node (IN) at (3.5,0) {$ \rl_S(\sB)$};

\path[ ->,font=\scriptsize,>=angle 90]
(L.south) edge node[right] { $\ev_{\rl_S(\sT)}$ } (IL.north);
\path[ ->,font=\scriptsize,>=angle 90]
(L.east) edge node[above] { $\simeq$ } (M.west);
\path[ ->,font=\scriptsize,>=angle 90]
(IL.east) edge node[right] { ${}$ } (IM.west);

\path[ ->,font=\scriptsize,>=angle 90]
(M.south) edge node[right] { $\rl_S(\ev^{\HH}_{\sT/\sB})$ } (IM.north);
\path[ ->,font=\scriptsize,>=angle 90]
(N.south) edge node[right] { $ \simeq $ } (IN.north);
\path[->,font=\scriptsize,>=angle 90]
(M.east) edge node[above]{$\int_{X/S} $} (N.west);
\path[->,font=\scriptsize,>=angle 90]
(IM.east) edge node[above]{$\rl_S(\on{m})$} (IN.west);
\end{tikzpicture}
\end{equation}
and we conclude as follows:
\begin{align*}
    \Bl(X/S) & = \int_{X/S}[\Delta_X] && \text{Cor.  \ref{cor: nc interpretation Bloch number pure char}}\\
             & = \ev_{\rl_S(\T)}\bigt{ \chern (\Delta_X)} && \text{\eqref{diag: key comm diagnostics in pure char}}  \\
             & = \Tr_{\rl_S(\sB)}\bigt{\id:\rl_S(\T)} && \text{Prop. \ref{prop:l-adic realization sing is dualizable over B}} \\
             & = - \Tr_{\Qell}\bigt{\id: \uH^*_{\et}(X_s,\Phi_{X/S})} && \text{\cite[Theorem 4.39]{brtv18}}\\
             & = -\dimtot \bigt{\uH^*_{\et}(X_s,\Phi_{X/S})}.
\end{align*}
Notice that we have implicitly used that the $\ell$-adic non commutative Chern character
$$
  \chern:
  \HK_0(\B)
  \to
  \uH^0_{\et}\bigt{S,\rl_S(\B)}
$$
identifies with the inclusion $\bbZ \subseteq \Qell$.
\begin{rmk}
    Notice that the proof of Theorem \ref{mainthm:pure-char} works in the same situation as in Theorem \ref{main theorem 3}.
\end{rmk}

\newpage

\end{document}